\newtheorem{theorem}{Theorem}
\numberwithin{equation}{section}
\numberwithin{theorem}{section}
\newtheorem{lemma}[theorem]{Lemma}
\newtheorem{corollary}[theorem]{Corollary}
\newtheorem{rem}[theorem]{Remark}
\newtheorem{prop}[theorem]{Proposition}
\newtheorem{ex}[theorem]{Example}
\renewcommand{\Re}{{\rm Re}}
\renewcommand{\Im}{{\rm Im}}
\newcommand{\zeros}{{\sf Zeros}\,}
\newcommand{\cl}{{\rm cl}\,}
\newcommand{\Cov}{{\rm Cov}\,}
\newcommand{\ii}{{\rm i}}
\newcommand{\ee}{{\rm e}}
\newcommand{\1}{\mathbbm{1}}
\renewcommand{\d}{{\rm d}}
\newcommand{\E}{\mathbb{E}}
\newcommand{\erf}{\operatorname{erf}}
\newcommand{\erfc}{\operatorname{erfc}}
\newcommand{\od}{\overset{{\rm d}}{=}}
\newcommand{\todistr}{\overset{{\rm d}}{\longrightarrow}}
\newcommand{\tofdd}{\overset{{\rm f.d.d.}}{\longrightarrow}}
\newcommand{\toas}{\overset{{\rm a.s.}}{\longrightarrow}}
\newcommand*\oline[1]{%
  \vbox{%
    \hrule height 0.5pt
    \kern0.25ex
    \hbox{%
      \kern-0.05em
      \ifmmode#1\else\ensuremath{#1}\fi
      \kern-0.05em
    }
  }
}
\begin{document}

\title[On zeros of random polynomials with regularly varying coefficients]{On the distribution patterns of zeros for random polynomials with regularly varying coefficients}

\author{Zakhar Kabluchko}

\address{Zakhar Kabluchko: Institute of Mathematical Stochastics, Department of Mathematics and Computer Science, University of M\"{u}nster, Orl\'{e}ans-Ring 10, D-48149 M\"{u}nster, Germany}
\email{zakhar.kabluchko@uni-muenster.de}

\author{Boris Khoruzhenko}
\address{Boris Khoruzhenko: School of Mathematical Sciences, Queen Mary University of London, Mile End Road, London E14NS, United Kingdom}
\email{b.khoruzhenko@qmul.ac.uk}

\author{Alexander Marynych}
\address{Alexander Marynych: Faculty of Computer Science and Cybernetics, Taras Shevchenko National University of Kyiv, Kyiv 01601, Ukraine; School of Mathematical Sciences, Queen Mary University of London, Mile End Road, London E14NS, United Kingdom}
\email{marynych@knu.ua}

\keywords{Random polynomials, self-inversive polynomials, zeros, regular variation, functional limit theorem, analytic random processes}
\subjclass[2020]{Primary: 26C10; Secondary: 60F17, 30B20}

\begin{abstract}
This paper investigates asymptotic distribution of complex zeros of random polynomials $P_n(z):=\sum_{k=0}^{n}b(k)\xi_k z^k$, as $n\to\infty$, where $b$ is a regularly varying function at infinity with index $\alpha\in \mathbb{R}$ and $(\xi_k)_{k\geq 0}$ is a sequence of independent copies of a complex-valued random variable $\xi$. The limiting distribution of zeros both inside and outside the unit disk is determined assuming $\mathbb{E}[\log^{+}|\xi|]<\infty$. Under the additional assumptions $\mathbb{E}[\xi]=0$ and $\mathbb{E}[|\xi|^2]<\infty$, local universality results for zeros near the boundary of the unit disk are established. Notably, it is shown that the point process of zeros undergoes a transition from liquid-like to crystalline phases as $\alpha$ crosses the critical value $\alpha_c = -\nicefrac{1}{2}$ from right to left. In the liquid phase ($\alpha > \alpha_c$), the limiting point process of zeros is universal. In the crystalline phase, it is universal if and only if $\alpha = \alpha_c$ and $\sum_k b^2(k) = +\infty$ (the weak crystalline phase), and non-universal when $\sum_k b^2(k) < +\infty$ (the strong crystalline phase). The zeros of the so-called random self-inversive polynomials on the unit circle exhibit a similar phase transition.

\end{abstract}

\maketitle

\section{Introduction}

\subsection{Random polynomials with regularly varying coefficients.}
Let $(\xi_k)_{k\geq 0}$ be an infinite sequence of independent copies of a
complex-valued\footnote{The case $\mathbb{P}\{\xi\in\mathbb{R}\}=1$ is not excluded.} random variable $\xi$ with $\mathbb{P}\{\xi=0\}<1$. Further, let $b:[0,+\infty)\to \mathbb{R}$ be a measurable function which varies regularly at $+\infty$ with index $\alpha\in\mathbb{R}$, that is $b(x)=x^{\alpha}\ell(x)$, $x>0$, where $\ell$ satisfies
\begin{equation}\label{eq:slow_variation_def}
\lim_{x\to+\infty}\frac{\ell(\lambda x)}{\ell(x)}=1,\quad\text{for every}\quad \lambda>0.
\end{equation}
Any eventually positive measurable function $\ell$ fulfilling~\eqref{eq:slow_variation_def} is called slowly varying and typical examples include functions converging to a non-zero finite limit as $x\to+\infty$; $\log^{\beta} \!x$, for any $\beta\in\mathbb{R}$; $\exp({\log^{\gamma}\!x})$, for any $\gamma\in (0,1)$; and any iterations of the logarithm, like $\log\log x$, $\log\log \log x$, and so on. For a comprehensive treatment of slowly and regularly varying functions, we refer the reader to~\cite{BGT}.

This paper is concerned with random polynomials
\begin{equation}\label{eq:p_n_def}
P_n(z):=\sum_{k=0}^{n}b(k)\xi_k z^k= b(0)\xi_0+\sum_{k=1}^{n}k^{\alpha}\ell(k)\xi_k z^k,\quad n\in\mathbb{N},
\end{equation}
and their zeros in the limit of high degree $n\to\infty$.
It is known that under a mild moment assumption (see~\eqref{eq:xi_log_moment} below) the normalized zero counting measure of $P_n$ converges to the uniform measure on the unit circle~\cite{Hughes+Nikeghbali:2008,Ibragimov+Zaporozhets:2013,Kabluchko+Zaporozhets:2014}, and in this note we investigate finer details of the asymptotic distribution of complex zeros of $P_n$ and how these depend on the behavior of its  random coefficients $b(k)\xi_k$.

\subsection{Motivation and background.}
Our investigation is motivated by the influential work of Bogomolny, Bohigas and Leboeuf on zeros of random polynomials \cite{Bogomolny+Bohigas+Leboeuf:1992}, see also a more detailed account of this work in \cite{Bogomolny+Bohigas+Leboeuf:1996}. It argues that random polynomials of high degree with zeros close to the unit circle appear naturally in the semiclassical approximation of the spectra of quantum chaotic systems and investigates conditions to be imposed on the coefficients of random polynomials ensuring that their zeros are close to or lie on the unit circle.  One of the conclusions in~\cite{Bogomolny+Bohigas+Leboeuf:1992} was that as the second moments of the coefficients increase, the zeros, which initially form a crystal-like circular lattice of an appropriate radius (Figure~\ref{Fig:1}, left), progressively spread into a liquid-like configuration (Figure~\ref{Fig:1}, right). This conclusion was based on heuristic analysis of zeros of polynomials with Gaussian coefficients, and it was conjectured in \cite{Bogomolny+Bohigas+Leboeuf:1992} that the transition between the crystalline and liquid-like patterns of distribution of zeros was a general phenomenon for random polynomials.

In the present work, we investigate this conjecture quantitatively and prove it within the broad framework of the family of random polynomials $P_n$ defined in~\eqref{eq:p_n_def}. Although it may appear more natural to consider random polynomials of the form $P_n(z) = \sum_{k=1}^n k^\alpha \xi_k z^k$ (without a slowly varying factor $\ell$), we show that at the critical threshold $\alpha = -\nicefrac{1}{2}$ the situation becomes more delicate. Inclusion of the slowly varying factor $\ell$ provides additional insight into crossover from the crystalline  to the liquid-like phases and highlights the existence of two distinct sub-phases (one is universal and the other is not) of the crystalline phase.

Our approach can be summarized as follows. We first select an appropriate observation window containing on average finitely many zeros~--~either a macroscopic window (for studying zeros in compacts inside or outside the unit disk) or a microscopic window (for studying zeros near the boundary of the unit disk). Within this window, we show that the sequence of (possibly rescaled) polynomials $P_n$ converges locally uniformly to a limiting random analytic process. Then, by Hurwitz’s theorem (recalled below), the zero set of $P_n$ converges, as a point process, to the zero set of this limiting process. Similar approach has been applied to random trigonometric polynomials in~\cite{Iksanov+Kabluchko+Marynych:2016}, real zeros of Kac polynomials (the random polynomials $P_n$ with $b(k)\equiv 1$) close to $1$ in~\cite{Michelen:2021}, complex zeros of the partition function of the (generalized) Random Energy Model in~\cite{Kab+Klim:2014,Kab+Klim:GREM:2014} and for random Dirichlet series in~\cite{Buraczewski+Dong+Iksanov+Marynych:2023}.  A description of the general framework and further examples can be found in~\cite{Shirai:2012}.

\begin{figure}[ht]
\begin{center}
\includegraphics[width = 0.38\textwidth, trim = 60 30 60 30, clip, angle = 0]{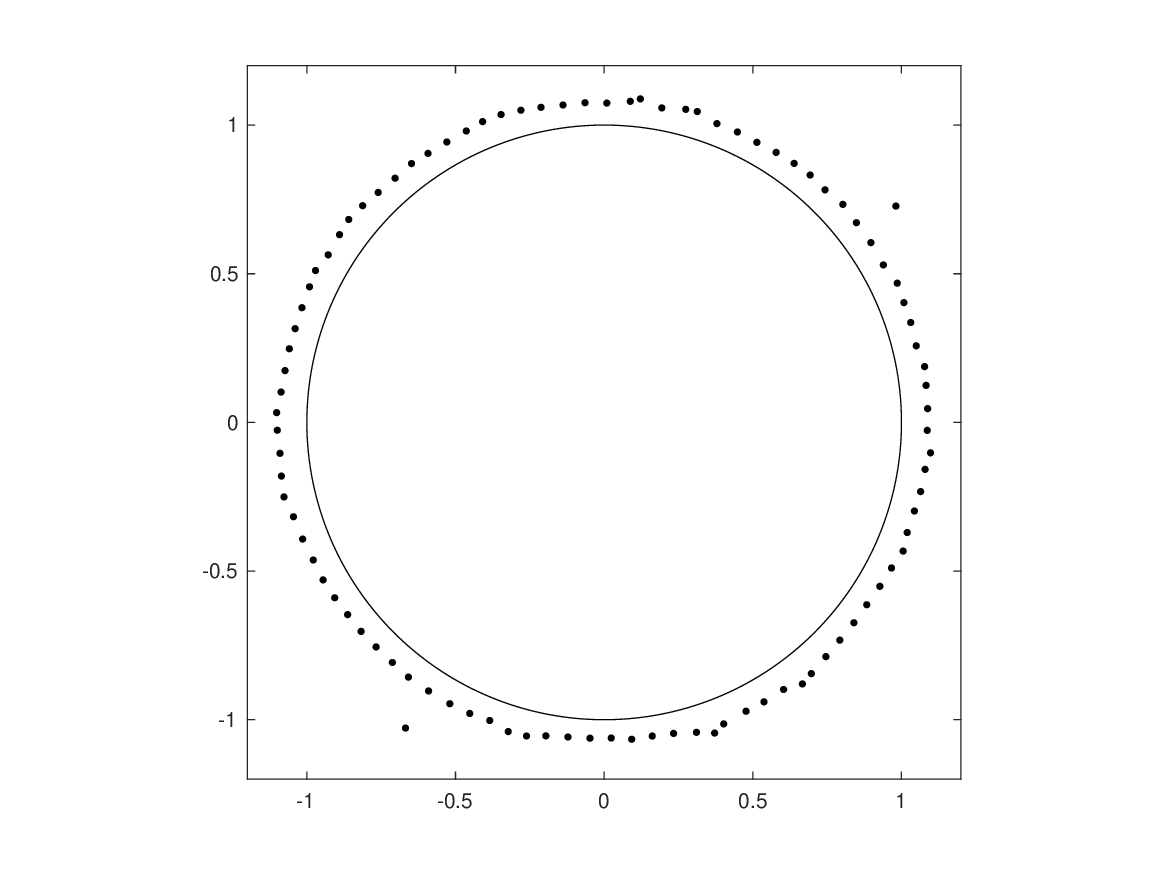}
\includegraphics[width = 0.38\textwidth, trim = 60 30 60 30, clip, angle = 0]{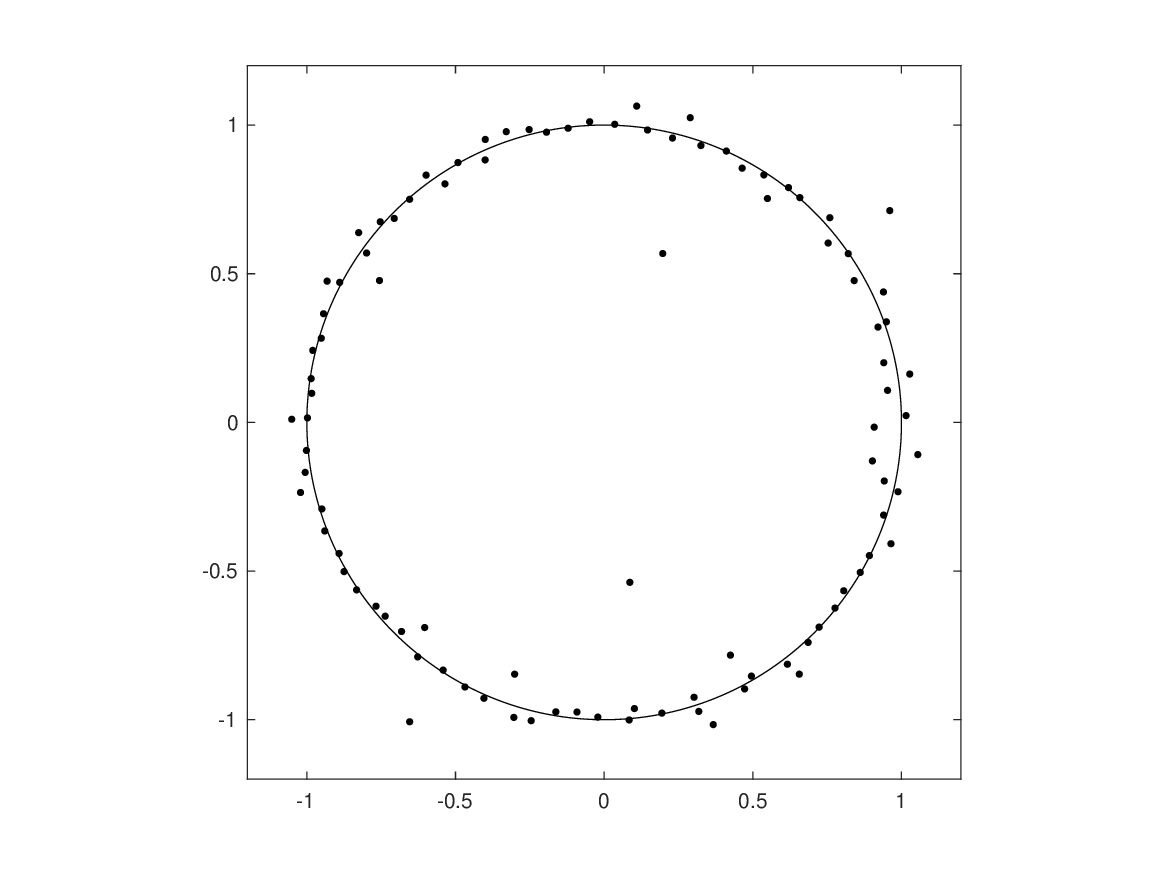}
\end{center}
\caption{Zeros of the random polynomial $P_n(z)= \sum_{k=0}^{n} (k+1)^{\alpha} \xi_k z^k$ of degree $n=100$ with $\alpha = -2$ (left, crystalline phase)
and $\alpha = 0$ (right, liquid phase).  The random variables $\xi_k$ are i.i.d.\ standard complex normals. The zeros are represented by dots and the unit circle is shown as the solid line.}
\label{Fig:1}
\end{figure}

\begin{figure}[ht]
\begin{center}
\includegraphics[width=0.6\textwidth ]{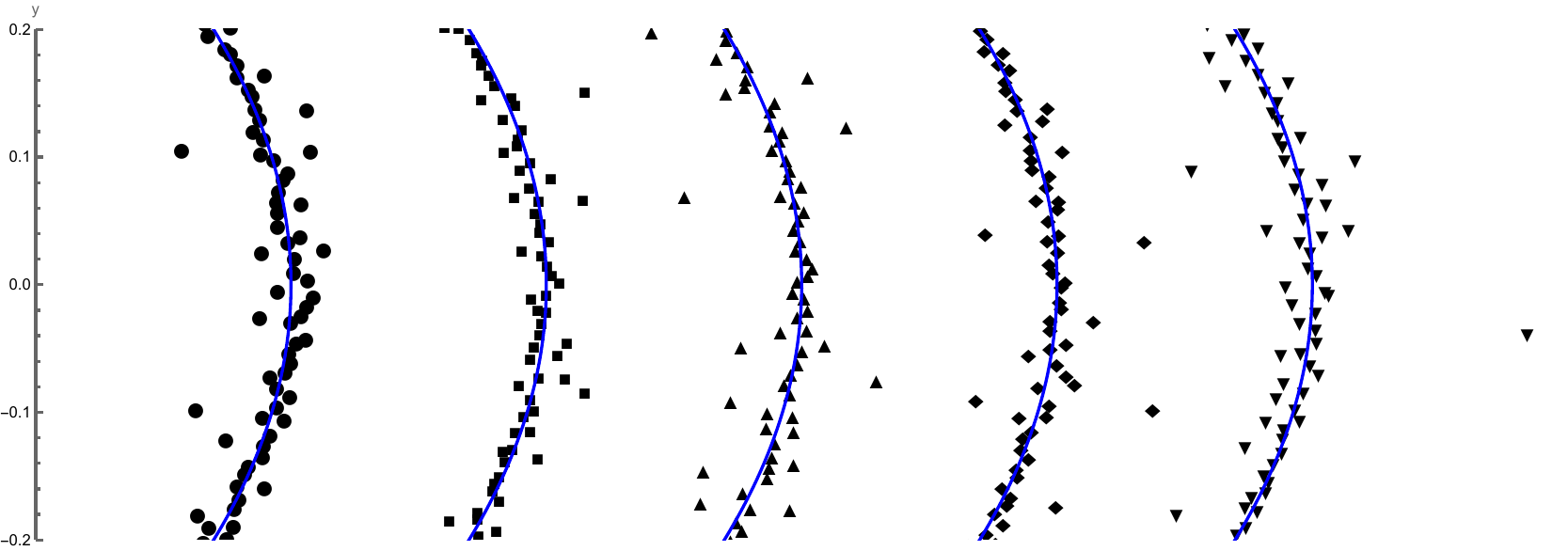}
\includegraphics[width=0.6\textwidth ]{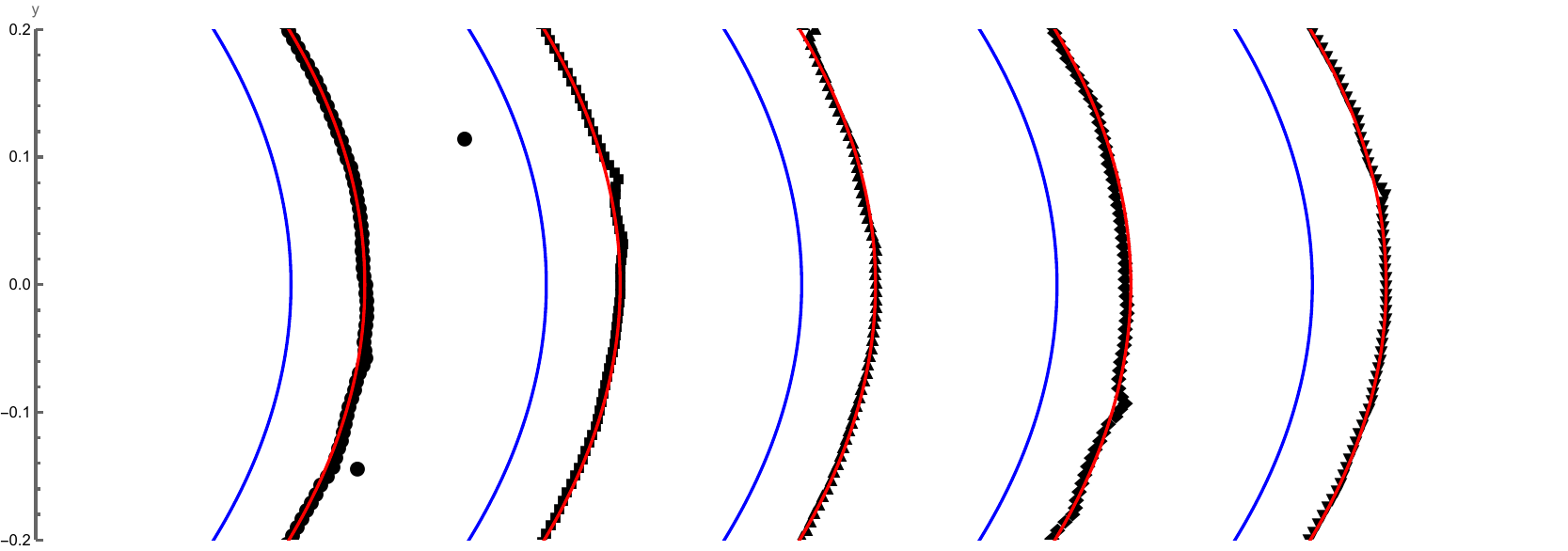}
\end{center}
\caption{Each row shows zeros of $5$  independent realizations of the random polynomial $P_n(z) = \sum_{k=0}^{n} (k+1)^{\alpha} \xi_k z^k$ with $\alpha = 0$ (top row, liquid phase) and $\alpha = -3$ (bottom row, crystalline phase). The random variables $\xi_k$ are i.i.d.\ complex standard  normal. The degree is $n=1000$. The unit circle is shown in blue. In the bottom row, the circle of radius $r_n$ is shown in red. Only zeros contained in the window  $[0.9,1.1] \times [-0.2,0.2]$ are shown. }
\label{Fig:2}
\end{figure}

\subsection{Main findings.}
Our key finding is the existence of a transition 
in the \emph{local} pattern of complex zeros of $P_n$ (in the limit $n\to\infty$) from an asymptotically equidistant one-dimensional lattice  along small arcs of a centered circle whose radius tends to $1$ to a  
genuinely  two-dimensional point process in the $1/n$-neighborhood of the unit circle, see Figures~\ref{Fig:1} and \ref{Fig:2} for illustration. Specifically, under the assumption that $\xi$ has zero mean and finite second moment, we prove,  that:
\begin{enumerate}
\item[(i)] If  $\alpha>-\nicefrac{1}{2}$, then for any finite family of non-empty compact sets $K_j\subset \mathbb{C}$ and any angles $\psi_j \in [0,2\pi)$, $j=1,\ldots,m$, the zeros of $P_n$ in the scaling windows
$$
\ee^{\frac{K_j}{n}+\ii \psi}=\{\ee^{\frac{u}{n}+\ii \psi_j}: u\in K_j\},\quad j=1,\ldots,m,
$$
attached to the unit circle at points $\ee^{\ii \psi_j}$  are asymptotically distributed as zeros of  certain non-trivial \emph{Gaussian} entire random functions, see Theorems~\ref{thm:boundary:alpha>-1/2} ($m=1$) and~\ref{thm:boundary:alpha>-1/2_joint} ($m\geq 2$).
Following the terminology of  \cite{Bogomolny+Bohigas+Leboeuf:1992} we call this regime {\bf the liquid phase}.

\item[(ii)] If $\alpha\leq-\nicefrac{1}{2}$, then the zeros of $P_n$ in the scaling windows $r_n\ee^{\frac{K_j}{n}+\ii \psi_j}$, attached to the centered circle of radius $r_n$, asymptotically form one-dimensional lattices with equal spacings between the neighbouring zeros. As $n \to \infty$, the radius $r_n$ approaches $1$ at a rate slower than $n^{-1}$ and the lattices are randomly shifted away from the circle by distances $n^{-1}\!\omega_j$, see Theorems~\ref{thm:boundary:alpha<-1/2} and~\ref{thm:critical}.  We call this regime {\bf the crystalline phase}.  If $\sum_{k\geq 0}b^2(k)<\infty$, then the shifts $\omega_j$ depend on the whole collection of the variables $(\xi_k)_{k\geq 0}$ ({\bf the strong crystalline phase}). Otherwise, $\omega_j$'s are determined by the diffusive limit of $(\xi_k)_{k\ge 0}$, with the contribution of each individual term in the collection being negligible ({\bf the weak crystalline phase}).
\end{enumerate}

For the reader's convenience we summarize our main findings in Table~\ref{tab:1}. Although our results hold for any finite collection of arguments $\psi_1, \ldots, \psi_m \in [0, 2\pi)$, for brevity we present them only for a fixed $\psi \in [0, 2\pi)$. The zeros of the random entire functions $u\mapsto P_n\left(r_n\ee^{\frac{u}{n}+i\psi}\right)$ converge, in the sense of weak convergence of point processes in $\mathbb{C}$, to the zeros of the entire functions in the penultimate column. In the liquid phase the radius $r_n=1$ for all $n$, and the scaling windows are centered on the unit circle. In the crystalline phase $r_n$ approaches $1$ from above as $n\to\infty$. We give the leading-order term of $\log r_n$ in the table. Exact formulae for $r_n$ in the crystalline phases are given in terms of the secondary branch of the Lambert-$W$ function and can be found in equations \eqref{eq:z_n_alpha<=-1/2}, \eqref{eq:z_n_alpha<-1/2} and \eqref{eq:z_n_alpha=-1/2}.

\begin{table}[ht]
\label{table:1}
  \centering
  \caption{The limiting forms of $\frac{1}{c_n}P_n(r_n\ee^{\frac{u}{n}+\ii \psi})$ as $n\to\infty$ for the random polynomials $P_n$ of degree $n$ with i.i.d.\ $\xi_k$ satisfying \eqref{eq:moment_assumptions_on_xi1} -- \eqref{eq:moment_assumptions_on_xi2}. In the penultimate column of the table, $G_{\psi}(u)$ is a centered Gaussian entire function with covariance structure~\eqref{eq:covariance:Ga}-\eqref{eq:covariance:Gb}, $N_1$ and $\hat N_1$ are independent centered complex Gaussians with the common distribution given by equations~\eqref{eq:limit_gaussian_cov_alt}; $N_1$ is also independent of a random process $P_{\infty}$ defined by formula~\eqref{eq:p_infty_def} below.}
    \begin{tabular}{cccccc}
        \toprule
           $\alpha$                & $L(n)=\int_1^n \frac{\ell^2(t)}{t}dt$             & $c_n$             & $\log r_n$            & Limiting Form       & Phase           \\[2ex]
        \midrule
          $\alpha < - \nicefrac{1}{2}$   & ---         & $1$          & $\sim -(\alpha+\nicefrac{1}{2})\frac{\log n}{n}$     & $P_{\infty} (\ee^{\ii\psi}) + \ee^{u} N_1$   & Str.~cryst.         \\[2ex]
           $\alpha = - \nicefrac{1}{2}$ & $ L(+\infty) < +\infty$       & $1$          &    $\sim \frac{\log{(1/\ell(n))}}{n}$        &   $P_{\infty} (\ee^{\ii\psi}) + \ee^{u} N_1$    & Str.~cryst.      \\[2ex]
          $\alpha = - \nicefrac{1}{2}$     & $L(+\infty) = +\infty$        & $ L^{\nicefrac{1}{2}}(n)$        & $\sim\frac{{\log (L(n)/\ell^2(n))}}{2n}$        & $\hat N_1 +\ee^{u} N_1$   & Weak~cryst.       \\[2ex]
         $\alpha > - \nicefrac{1}{2}$      & ---          & $n^{\alpha+\nicefrac{1}{2}}\ell(n) $          & $0$     & $G_{\psi}(u)$     & Liquid        \\[2ex]
                 \bottomrule
    \end{tabular}
  \label{tab:1}
\end{table}

\subsection{Universality of limiting point processes of zeros and crossover from liquid to crystalline phases.}
Fix $\psi\in [0,2\pi)$ and a non-empty compact set $K$ and consider the scaling window
$\ee^{\frac{K}{n}+\ii \psi}$ attached to the unit circle at $\ee^{\ii \psi}$.
In the liquid phase ($\alpha > -\nicefrac{1}{2}$) the limiting distribution of zeros of $P_n$, $n\to\infty$, in this scaling window is universal. Indeed, the Gaussian analytic function $G_{\psi}$ that determines the scaling limit of zeros depends only on the index of regular variation $\alpha$ and on the variances of $\Re \xi$ and $\Im \xi$. Similarly, the distribution of zeros in the weak crystalline phase is universal too. Indeed, in this case the limiting distribution of zeros of $P_n$ in the scaling window  $r_n \ee^{\frac{K}{n}+\ii \psi}$ attached to the centered circle of radius $r_n\to1$ is determined by the Gaussian analytic function $u\mapsto \hat N_1+\ee^{u}N_1$ which only depends on the variances of $\Re \xi$ and $\Im \xi$. If $\sum_{k\ge 0} b^2(k)<\infty$, then the scaling limit of the distribution of zeros of $P_n$ in the crystalline phase is no longer universal. It depends on the distribution of $\xi$, on the angle $\psi$ (and also on the value of $\alpha$), see the first two rows in Table~\ref{tab:1}. Further distinctions between weak and strong crystalline phases, aside from the issue of universality, are discussed in Remarks~\ref{rem:zeros_in_annulus_weak_cryst_phase} and~\ref{rem:zeros_in_annulus_strong_cryst_phase} below.

Interestingly, transition from the liquid to the weak crystalline phase as well as one from the strong crystalline to the weak crystalline phases are continuous. To make this statement precise, put
\begin{equation}\label{eq:s_gamma_def}
S(\gamma; \ell):=\sum_{k\geq 1}k^{\gamma}\ell^2(k)\in (0,+\infty],\quad \gamma\in\mathbb{R}.
\end{equation}
Note that $S(\gamma; \ell)<+\infty$ if $\gamma<-1$, $S(\gamma; \ell )=+\infty$ if $\gamma>-1$ for every slowly varying function $\ell$, whereas $S(-1; \ell)$ might be finite or equal to $+\infty$ depending on $\ell$. Note that $S(2\alpha; \ell)=\sum_{k\geq 1}b^2(k)$ and  $S(-1; \ell)<+\infty$ if and only if $L(+\infty)<+\infty$ in Table~\ref{tab:1}.

The transition from the liquid to the weak crystalline phase occurs when $\alpha\to -\nicefrac{1}{2}+0$, whereas the transition from the strong to the weak crystalline phase occurs when the sequence $(\alpha_m, \ell_m)_{m\geq 1}$ defining the coefficients of our polynomials, satisfy
\begin{equation}\label{eq:b_m_crossover_strong_weak}
S(2\alpha_m; \ell_m)<+\infty,\text{ for every }m\geq 1,
\quad
\text{and}\quad \lim_{m\to\infty}S(2\alpha_m; \ell_m)=+\infty.
\end{equation}
Both aforementioned transitions are continuous, as will be explained below.

We prove in Section~\ref{sec:crossover} that in the limit when $\alpha$ tends to $-\nicefrac{1}{2}$ from above, the zero set of $G_{\psi}$ (liquid phase), which does not depend on $\ell$, converges in distribution after an appropriate centering to the zero set of  $u\mapsto \hat N_1+\ee^{u}N_1$ (weak crystalline phase), see Theorem~\ref{thm:crossover} and Corollary~\ref{cor:crossover}. In the case of real and imaginary parts of $\xi$ having equal variances, the marginal distributions of $G_{\psi}$ do not depend on $\psi$ and are isotropic Gaussian. In this case one can also prove convergence of the first intensities of the zero set of $G_{\psi}$ to the first intensities of the zero set of $u\mapsto \hat N_1+\ee^{u}N_1$, see formula~\eqref{eq:convergence_of_intensities_right}. This convergence is visualised in Figure~\ref{Fig:3}.

Similarly, it can be shown that when the sequence $(\alpha_m, \ell_m)_{m\geq 1}$, is chosen such that~\eqref{eq:b_m_crossover_strong_weak} holds true, then under mild regularity assumptions, the sequence of point processes of zeros of random analytic functions $u\mapsto P_{m,\infty}(\ee^{\ii \psi}) + \ee^{u}N_1=\sum_{k\geq 0}b_m(k)\xi_k \ee^{\ii \psi k}+\ee^{u N_1}$ (strong crystalline phase) converges after an appropriate centering to the zero set of the Gaussian analytic function $u\mapsto \hat N_1+\ee^{u}N_1$ (weak crystalline phase) when $m\to\infty$. See Proposition~\ref{prop:left-hand-side-crossover} for the precise statement.

\begin{figure}[ht]
\begin{center}
\includegraphics[width = 0.6\textwidth]{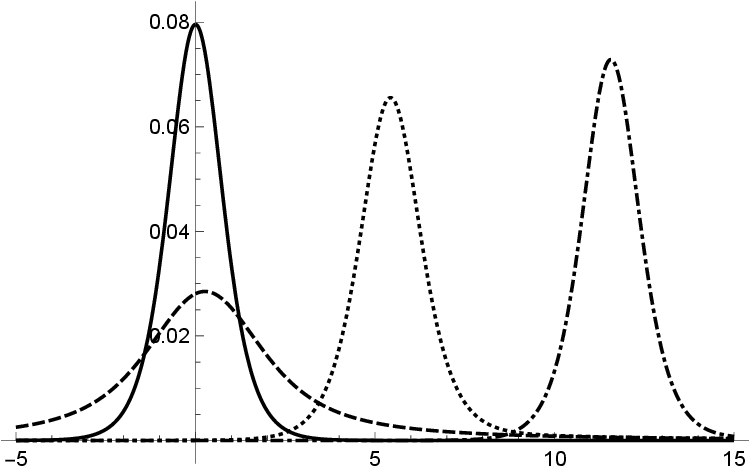}
\end{center}
\caption{Graphs of the intensity $\rho_1 (\alpha, \cdot )$ of zeros of $G_{\psi}$ restricted to the real axis for  $\alpha = -0.1$ (dashed line), $\alpha= -0.5 +10^{-4}$ (dotted line) and  $\alpha=- 0.5 +10^{-9}$ (dash-dotted line).  The solid line is the graph of the intensity of zeros of the Gaussian analytic function $u\mapsto \hat N_1+\ee^{u}N_1$ restricted to the real axis and is the limiting form of $\rho_1 (\alpha, \cdot )$, as $\alpha\to-\nicefrac{1}{2}+0$, after an appropriate shift. The intensity $\rho_1 (\alpha, \cdot )$ is defined in Eqs.~\eqref{eq:rho}--\eqref{eq:Phi_def} and $G_{\psi}$ is assumed to have isotropic Gaussian marginals.}
\label{Fig:3}
\end{figure}

\subsection{Two examples of random polynomials with regularly varying variance profile.}
As an example of random polynomials having complex zeros near the unit circle in the liquid phase we mention a family of the so-called hyperbolic random polynomials
\begin{align*}
P_n^{(2\alpha)}(z):= \sum_{k=0}^n b_{2\alpha}(k)  \, \xi_k z^k, \quad b_{2\alpha}(k) = \frac{\sqrt{(2\alpha+1)(2\alpha+2) \cdots (2\alpha+k)}}{\sqrt{k!}}, \quad   \alpha >-\nicefrac{1}{2}.
\end{align*}
Here, $x\mapsto b_{2\alpha}(x)$ is regularly varying at $+\infty$ with index $\alpha$. This family includes the Kac polynomials $P_n^{(0)}$ as the special case $\alpha=0$. When $\xi$ has the standard complex normal distribution, the zero set of the limiting Gaussian analytic function
\begin{align*}
P_{\infty}^{(2\alpha)}(z):=\lim_{n\to\infty} P_{n}^{(2\alpha)}(z), \quad z \in \mathbb{D}_1:=\{ z\in \mathbb{C}: |z|<1\},
\end{align*}
has  intensity $z\mapsto (2\alpha+1)\pi^{-1}(1-|z|^2)^{-2}$ which is invariant under the M\"obius transformations of $\mathbb{D}_1$, see Remark 2.4.5 in~\cite{HKPV}. This explains the word hyperbolic in the name of the families $P_{\infty}^{(2\alpha)}$ and $P_{n}^{(2\alpha)}$.
The Kac polynomials $P_n^{(0)}$ with Gaussian coefficients is a very special case. In this case the zero set of $P_{\infty}^{(0)}$
is a determinantal point process whose properties are well understood \cite{HKPV,Peres+Virag:2005}. It turns out, see  Theorem~\ref{thm:1} and its Corollary~\ref{cor:outside_unit_disk} below, that this determinantal process appears to be universal in the context  of Gaussian polynomials with regularly varying coefficients. For every $\alpha\in \mathbb{R}$ the zeros of random polynomials $P_n$ \eqref{eq:p_n_def} converge, outside the unit disk $|z|>1$, to the point process obtained by inversion of the zero set of $P_{\infty}^{(0)}$, and if $\xi$ follows the standard complex normal distribution, then the latter is determinantal.

The simplest example of $P_n$ at the critical threshold $\alpha=-\nicefrac{1}{2}$ is given by
\begin{equation}\label{eq:critical}
P^{{\rm crit}}_{n}(z)=\sum_{k=1}^n \frac{\xi_k}{\sqrt{k}} \,  z^k.
\end{equation}
According to our trichotomy, zeros of these polynomials are in the weak crystalline phase. If $\mathbb{E}[\xi]=0$ and $\sigma^2:=\mathbb{E}[|\xi|^2]\in (0,+\infty)$, then $P^{{\rm crit}}_n$ converge in $\mathbb{D}_1$, as $n\to\infty$, to a random analytic function
$$
P^{{\rm crit}}_{\infty}(z)=\sum_{k=1}^\infty \frac{\xi_k}{\sqrt{k}} \,  z^k, \qquad z\in \mathbb{D}_1,
$$
with the logarithmic covariance  $\mathbb{E}[P^{{\rm crit}}_{\infty}(z) \oline{P^{{\rm crit}}_{\infty}(w)}] =\sigma^2 \log (1- z\overline{w})$, $z,w\in\mathbb{D}_1$. Interestingly, when $\xi$ follows the standard complex normal distribution, the Gaussian analytic function $P^{\mathrm{crit}}_{\infty}$ appears in several other contexts. First, $P^{\mathrm{crit}}_{\infty}$ arises as the distributional limit (in large dimensions) of the logarithm of the characteristic polynomial $\det({\bf Id}_n - z U_n)$, where $U_n$ is a Haar-distributed unitary matrix of dimension $n$. This result follows easily from the findings in~\cite{Diaconis+Shahshahani:1994} but we provide a brief proof in Proposition~\ref{prop:char_poly_unitary} below for completeness.
Second, the radial limit of $P_{\infty}(r\ee^{\ii\theta})$ as $r \to 1-$ defines a (generalized) logarithmically correlated Gaussian process in the angular variable  $
\theta$~\cite{Hughes+Keating+OConnell:2001} which can be used to study the freezing transition for the free energy in the log-correlated Random Energy Model \cite{Fyodorov+Bouchaud:2008}, and, in a more general context, to construct an example of Gaussian multiplicative chaos, see, for example, 
~\cite[pp.~2248-2249]{Chhaibi+Madaule+Najnudel:2018} and reference therein.
The real zeros of $P^{{\rm crit}}_n$ for real-valued $\xi$ have been analyzed in~\cite{Krishnapur+Lundberg+Nguyen:2022},
where it was found that $P^{{\rm crit}}_n$ is a critical threshold in the family of random polynomials with power-law growing coefficients separating the logarithmic growth of the expected number of real zeros in the interval $(-1,1)$, as $n\to\infty$, from no growth at all. For further details we refer the interested reader to the original paper~\cite{Krishnapur+Lundberg+Nguyen:2022}.

\subsection{Self-inversive random polynomials with regularly varying coefficients}
Our findings, see Table~\ref{tab:1}, can be applied to investigate zeros of the so-called self-inversive random polynomials. Any self-inversive polynomial $\mathfrak{P}$ satisfies the relation
\begin{align}\label{eq:SI}
\mathfrak{P}(z)=z^n\oline{\mathfrak{P}(1/ \,\oline{z})}, \quad n = \deg \mathfrak{P},
\end{align}
and its zeros off the unit circle come in pairs $(z, {1}/\,{\oline{z}})$. In the study of random polynomials, the self-inversive property~\eqref{eq:SI} was employed in~\cite{Bogomolny+Bohigas+Leboeuf:1992} as a simple condition to ensure that at least a finite fraction of the zeros lies on the unit circle. This provides partial justification for using random polynomials to model the spectra of quantum chaotic systems, analogous to how random matrix theory is used to model the spectra of heavy nuclei. It was claimed in~\cite{Bogomolny+Bohigas+Leboeuf:1992} and demonstrated in~\cite{Bogomolny+Bohigas+Leboeuf:1996} that the random self-inversive polynomials
\begin{multline*}
\mathfrak{P}_{m} (z) = 1+\sigma \sum_{k=1}^{2m} N_k z^k + z^{2m+1}, \text{where $N_1,\ldots,N_m$ are independent}\\ \text{standard complex normal random variables and } N_{2m+1-k}=\oline{N}_k,\quad m\in\mathbb{N},
\end{multline*}
in the regime when $\sigma=\sigma_m$ depends on $m$ and
\begin{align}\label{eq:scalinglimit}
\lim_{m\to\infty} \sigma_m  \sqrt{2m+1}  = \epsilon \in (0,\infty)
\end{align}
have, on average, a finite fraction of zeros lying on the unit circle. Motivated by the results on self-inversive polynomials obtained in~ \cite{Bogomolny+Bohigas+Leboeuf:1992, Bogomolny+Bohigas+Leboeuf:1996} we investigate the family of self-inversive random polynomials
\begin{multline}\label{eq:K_m_polys_def}
K_m(z):=1+\sum_{k=1}^{m}b(k)\xi_k z^k + \sum_{k=1}^{m}b(k)\oline{\xi_k} z^{2m-k+1} + z^{2m+1}\\
=1+P_m(z)+z^{2m+1}\oline{P_m(\oline{z}^{-1})}+z^{2m+1},\quad m\in\mathbb{N},
\end{multline}
with regularly varying $b$. Whilst the focus of~\cite{Bogomolny+Bohigas+Leboeuf:1992,Bogomolny+Bohigas+Leboeuf:1996} was on investigating the regime~\eqref{eq:scalinglimit}, we investigate the dependence of the distribution patterns of zeros on the parameter $\alpha$. We find that for every $\alpha > -\nicefrac{1}{2}$ the zeros of  $K_m$ on the unit circle in a scaling window of length $1/m$ centered at $z=1$ are asymptotically distributed as zeros of a certain Gaussian analytic function in the limit $m\to\infty$. And if $\alpha \le  -\nicefrac{1}{2}$ the zeros of $K_m$ lying on the unit circle in the same window asymptotically form a  lattice along the unit circle with equal spacing between the neighbouring zeros  and a random angular shift.

\subsection{Relevant work on random polynomials.}
The polynomials $P_n$ with regularly varying coefficients studied in this work constitute a variant of random polynomials with coefficients exhibiting polynomial growth, as analyzed in~\cite{Do+Nguen+Vu:2018,Do:2021,Do+Nguyen:2025}; see Eq.~(2.5) in~\cite{Do+Nguen+Vu:2018} for the definition of this class. However, neither class is a subset of the other, and in addition, unlike the present paper, which focuses on complex zeros, the work presented in the cited papers focuses on real zeros and assumes $\alpha> -\nicefrac{1}{2}$.

The asymptotic properties of complex zeros of random polynomials have been much studied,  see~\cite{Nguyen+Vu:2022} and~\cite{Pritsker+Yeager:2015} and the references therein. We mention here only work that is most relevant in the context of our investigation. Uniform concentration of zeros of $P_n$ near the unit circle in the limit $n\to\infty$ was established in ~\cite{Hughes+Nikeghbali:2008,Ibragimov+Zaporozhets:2013,Kabluchko+Zaporozhets:2014}. The intensity of zeros in the annulus of width $O(1/n)$ around the unit circle, which is a refinement of the concentration of zeros, was obtained for the Kac polynomials in~\cite{Ibragimov+Zeitouni:1997,Shepp+Vanderbei:1995}. This result holds on the global scale in the sense that the angular coordinate in the annulus is not fixed. A local extension of this result (in the window of size $O(1/n)$ attached to a fixed point $\ee^{\ii \psi}$) for random polynomials $P_n$ with index of regular variation $\alpha > -\nicefrac{1}{2}$ can be inferred from our Theorem~\ref{thm:boundary:alpha>-1/2}, see Example~\ref{ex:edelman_kostlan}, and in particular equation~\eqref{eq:rho} there. The latter equation suggests that the global profile of the intensity of zeros of $P_n$, $\alpha > -\nicefrac{1}{2}$, should be given by Eq.~ \eqref{eq:intenstiy_rho_1_int}. This is indeed so if the coefficients of $P_n$ have the isotropic Gaussian distribution, see Example~\ref{ex:edelman_kostlan}. We note that Eq.~\eqref{eq:intenstiy_rho_1_int}  was also verified in~\cite{Bleher+Ridzal:2002} for Gaussian hyperbolic polynomials $P_n^{(2\alpha)}$.

When the distribution of the coefficients of $P_n$ is isotropic Gaussian, analysis of zeros is facilitated by the Kac-Rice and Edelman-Kostlan formulae, see~\cite{HKPV}. For instance, the limiting joint intensities of zeros for the Gaussian hyperbolic polynomials $P_n^{(2\alpha)}$ as $n\to\infty$ on compact sets inside and outside the unit disk $\mathbb{D}_1$ were obtained in~\cite{Bleher+Ridzal:2002}. The authors observe that outside $\mathbb{D}_1$ the limiting joint intensities do not depend on the value of $\alpha$. Our Theorem \ref{thm:1} explains why this generally holds true for random polynomials $P_n$ with index of regular variation  $\alpha > - \nicefrac{1}{2}$. The scaling limit of joint intensities of zeros of Kac polynomials with isotropic Gaussian coefficients in the window of size $O(1/n)$ attached to  fixed points at the boundary of $\mathbb{D}_1$ was obtained in \cite{Shiffman+Zelditch:2003}  along with an extension of this result to Kac-like polynomials with the monomials $z^k$, $k\ge 0$, replaced by the orthonormal polynomials on the boundary of a bounded simply connected analytic planar domain. The universality of local correlations of zeros at the boundary of $\mathbb{D}_1$ was established for the (non-Gaussian) Kac polynomials in~\cite{Tao+Vu:2014} and later extended to random polynomials with coefficients exhibiting polynomial growth in \cite{Do+Nguen+Vu:2018}, see also \cite{Nguyen+Vu:2022} for general universality framework for random polynomials.

Finally, persistence probabilities for real zeros of random polynomials with regularly varying variance profile were studied in~\cite{Dembo+Mukherjee:2015}, and the real zeros of Kac-like polynomials in the scaling window of length $O(1/n)$ centered at the accumulation points $\pm1$ on the unit circle were investigated in ~\cite{Aldous+Fyodorov:2004} and~\cite{Michelen:2021}. Additional results on real zeros can be found in~\cite{Flasche+Kabluhcko:2020} and in references therein.

\subsection{Organization of paper.}
The paper is organized as follows. In Section~\ref{sec:zeros_away_from_the_boundary} we prove, for completeness, some simple facts about the asymptotic distribution of zeros away from the boundary of the unit disk. The rest of the paper is devoted to the behavior of zeros near the boundary of the unit disk. The liquid phase ($\alpha>-\nicefrac{1}{2}$) is treated in Section~\ref{sec:zeros_alpha>-1/2}, and the crystalline phases are treated in Sections~\ref{sec:zeros_alpha<=-1/2} and~\ref{sec:zeros_alpha=-1/2}. The results obtained there also include the description of the joint law for point processes of zeros in scaling windows attached to the \emph{different} points on the unit circle. Section~\ref{sec:crossover} is devoted to analyzing the limiting crossover from the liquid phase to the strong crystalline phase, and from the strong crystalline phase to the weak crystalline phase. In Section~\ref{sec:si_polys} we apply our results to the analysis of the self-inversive random polynomials for which a proportion of zeros lies exactly on the unit circle. In Section~\ref{section:5} we analyse the asymptotic behaviour of the average of the proportion of zeros of self-inversive polynomials with regularly varying coefficients lying on the unit circle in the particular case of isotropic Gaussian coefficients. Finally, two Appendices contains some auxiliary propositions.

\subsection{Notation.} Throughout the paper we use the following notation. The convergence in distribution of random elements taking  values in a functional space is denoted by $\Longrightarrow$. The convergence in the sense of finite-dimensional distributions of random processes is denoted by $\overset{{\rm f.d.d.}}{\longrightarrow}$. The convergence in distribution of random variables and vectors is denoted by $\overset{{\rm d}}{\to}$. Finally, the a.s.~convergence is denoted by $\overset{{\rm a.s.}}{\to}$. For the asymptotic equivalence we use the notation $\sim$. Thus, $f(x)\sim g(x)$, $x\to A$, means that $\lim_{x\to A}f(x)/g(x)=1$.
When the function $\ell$ in the notation $S(\gamma; \ell)$ is clear from context, we simply write $S(\gamma)$.

Throughout the paper  we assume that $b(x)>0$ for all $x>0$, but all the results of this paper hold for any eventually positive regularly varying function $b$.

\medskip

\textbf{Acknowledgements.} The authors are grateful to Yan Fyodorov for bringing paper \cite{Bleher+Ridzal:2002} to our attention, and for comments on the manuscript that helped to improve its presentation.

\section{Zeros inside and outside the unit disk}\label{sec:zeros_away_from_the_boundary}

For any open connected domain $\mathcal{D}\subseteq \mathbb{C}$, the polynomial $P_n$ can be regarded as a random element taking values in $\mathcal{A}(\mathcal{D})$, the vector space of analytic functions on $\mathcal{D}$. A natural topology on $\mathcal{A}(\mathcal{D})$ is the topology of locally uniform convergence. It is well known that this topology is metrizable via the metric
$$
\|f-g\|_{\mathcal{A}(\mathcal{D})}:=\sum_{j\geq 1}2^{-j}\min\big(\sup_{z\in K_j}|f(z)-g(z)|,1\big),
$$
where $(K_j)_{j\geq 1}$ is any increasing sequence of compact sets such that $\mathcal{D}=\cup_{j\geq 1} K_j$. Under this metric, $\mathcal{A}(\mathcal{D})$ is a complete separable metric space. We refer to \cite[\S2]{Shirai:2012} for the basic properties of the space $\mathcal{A}(\mathcal{D})$.

Observe that under the moment assumption
\begin{equation}\label{eq:xi_log_moment}
\mathbb{E}[\log^{+}|\xi|]<\infty,
\end{equation}
where $\log^{+}(x)=\max(\log x,0)=\log \max(x,1)$, the sequence $(P_n)$ converges almost surely (a.s.) on $\mathcal{A}(\mathbb{D}_1)$ to a random analytic function
\begin{equation}\label{eq:p_infty_def}
P_{\infty}(z)=\sum_{k\geq 0}b(k)\xi_k z^k, \qquad z\in \mathbb{D}_1=\{|z|<1\}.
\end{equation}
This is a consequence of the Cauchy root test in combination with the following two facts. Firstly, by the Borel-Cantelli lemma,~\eqref{eq:xi_log_moment} implies
\begin{equation}\label{eq:slln_xi}
\lim_{k\to\infty}\frac{\log^{+}|\xi_k|}{k}=0\quad\text{a.s.}
\end{equation}
and, secondly,
$$
\lim_{k\to\infty}\frac{\log b(k)}{k}=0,
$$
by well-known properties of regularly varying functions.

\begin{prop}[Convergence inside the unit disk: polynomials]
\label{prop:1}
Assume~\eqref{eq:xi_log_moment}. Then
$$
(P_n(z))_{z\in\mathbb{D}_1}~\toas~(P_{\infty}(z))_{z\in\mathbb{D}_1},\quad n\to\infty,
$$
on the space $\mathcal{A}(\mathbb{D}_1)$.
\end{prop}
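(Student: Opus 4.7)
The plan is a direct application of the Cauchy root test, leveraging the two one-liners already recorded in the paragraph preceding the proposition. Since the metric on $\mathcal{A}(\mathbb{D}_1)$ is built from a countable exhaustion of $\mathbb{D}_1$ by compacts, it suffices to exhibit one almost-sure event on which $P_n\to P_\infty$ uniformly on every disk $\{|z|\le r\}$ with $r<1$; the countable family $r_j=1-1/j$, $j\in\mathbb{N}$, is enough.

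The first step I would take is to show
\[
\limsup_{k\to\infty}\bigl(b(k)\,|\xi_k|\bigr)^{1/k}\le 1\qquad\text{a.s.}
\]
Taking logarithms, this reduces to
\[
\limsup_{k\to\infty}\frac{\log^{+}(b(k)|\xi_k|)}{k}\le \limsup_{k\to\infty}\frac{\log b(k)}{k}+\limsup_{k\to\infty}\frac{\log^{+}|\xi_k|}{k}=0+0,
\]
where the two summands vanish almost surely by, respectively, the standard regular-variation estimate $\log b(k)=o(k)$ recalled in the excerpt and the Borel--Cantelli argument \eqref{eq:slln_xi}, which is available because $\mathbb{E}[\log^{+}|\xi|]<\infty$.

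Next, I would fix $\omega$ in the full-measure event produced above and any $r\in(0,1)$, choose $\rho$ with $r<\rho<1$, and note that for all sufficiently large $k$ one has $b(k)|\xi_k(\omega)|\le \rho^{-k}\cdot\rho^k\cdot (\rho/r)^{-k}$, so that $\sum_{k\ge 0} b(k)|\xi_k(\omega)|\,r^k<\infty$. This gives the pointwise bound
\[
\sup_{|z|\le r}|P_n(z;\omega)-P_\infty(z;\omega)|\le \sum_{k>n}b(k)|\xi_k(\omega)|\,r^k\;\xrightarrow[n\to\infty]{}0,
\]
so $P_n\to P_\infty$ uniformly on $\{|z|\le r\}$. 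Applying this to the countable family $r=r_j$ and intersecting the corresponding full-measure events yields almost-sure locally uniform convergence on $\mathbb{D}_1$, which is exactly convergence in $\mathcal{A}(\mathbb{D}_1)$.

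I do not expect any genuine obstacle: the statement is really a consistency check that the infinite series defining $P_\infty$ makes sense. The only modest care is to reduce convergence on the uncountable family of compacts in $\mathbb{D}_1$ to a countable one, so that a single null set works simultaneously, but this is immediate from the explicit metric description of $\mathcal{A}(\mathbb{D}_1)$ recalled just above the proposition.
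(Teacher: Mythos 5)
Your proposal takes essentially the same route as the paper: the paper's proof (given in the paragraph above the Proposition) is precisely the appeal to the Cauchy root test together with the almost-sure facts $\log^{+}|\xi_k|=o(k)$ and $\log b(k)=o(k)$, and you simply spell out the reduction to a countable family of compacts. One small slip in your write-up: the displayed intermediate bound should read $b(k)|\xi_k(\omega)|\le \rho^{-k}$ for all large $k$ (from $\limsup_k (b(k)|\xi_k|)^{1/k}\le 1<\rho^{-1}$), whence $b(k)|\xi_k(\omega)|\,r^k\le (r/\rho)^k$ is summable; the factor string $\rho^{-k}\cdot\rho^k\cdot(\rho/r)^{-k}$ collapses to $(r/\rho)^k$, which is not a valid bound on $b(k)|\xi_k(\omega)|$ itself, though the intended and correct chain is clear.
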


For an open connected set (domain) $\mathcal{D}\subseteq\mathbb{C}$, let $M_p(\mathcal{D})$ be the space of locally finite integer-valued measures on $\mathcal{D}$ endowed with the topology of vague convergence. Further, let $\zeros_{\mathcal{D}}:\mathcal{A}(\mathcal{D})\backslash\{0\}\to M_p(\mathcal{D})$ be the mapping which assigns to $f\in\mathcal{A}(\mathcal{D})$, $f\not\equiv 0$, the point process of complex zeros of $f$ in $\mathcal{D}$ with multiplicities. By Hurwitz's theorem the mapping $\zeros_{\mathcal{D}}$ is continuous on $\mathcal{A}(\mathcal{D})\setminus\{0\}$, see~\cite[Lemma 2.2]{Shirai:2012}.

\begin{corollary}[Convergence inside the unit disk: zeros]
\label{cor:2.1}
Assume~\eqref{eq:xi_log_moment}. Then
$$
\zeros_{\mathbb{D}_1}(P_n(z))~\toas~\zeros_{\mathbb{D}_1}(P_{\infty}(z)),\quad n\to\infty,
$$
on the space $M_p(\mathbb{D}_1)$.
\end{corollary}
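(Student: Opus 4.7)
The plan is to obtain the corollary as a direct transfer of Proposition~\ref{prop:1} through the continuity of the zero-counting map $\zeros_{\mathbb{D}_1}$ on $\mathcal{A}(\mathbb{D}_1)\setminus\{0\}$ recorded just above the statement.

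First I would verify that $P_{\infty}\not\equiv 0$ almost surely, so that the point process $\zeros_{\mathbb{D}_1}(P_{\infty})$ is well-defined on a set of full probability. Since $b(k)>0$ for all $k\geq 0$ by the standing assumption noted at the end of the Notation subsection, $P_{\infty}\equiv 0$ forces every Taylor coefficient $b(k)\xi_k$ to vanish, which in turn forces $\xi_k=0$ for every $k\geq 0$. By independence of the $(\xi_k)_{k\geq 0}$ and the hypothesis $\mathbb{P}\{\xi=0\}<1$, this event has probability $\prod_{k\geq 0}\mathbb{P}\{\xi=0\}=0$. Hence the event $\Omega_0:=\{P_{\infty}\not\equiv 0\}$ has full probability.

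Next, by Proposition~\ref{prop:1}, there is a further full-probability event $\Omega_1$ on which $P_n\to P_{\infty}$ in $\mathcal{A}(\mathbb{D}_1)$. On $\Omega_0\cap\Omega_1$ the limit lies in $\mathcal{A}(\mathbb{D}_1)\setminus\{0\}$, so Hurwitz's theorem, in the form of the continuity of $\zeros_{\mathbb{D}_1}:\mathcal{A}(\mathbb{D}_1)\setminus\{0\}\to M_p(\mathbb{D}_1)$ recalled just before the corollary (see~\cite[Lemma 2.2]{Shirai:2012}), yields
\[
\zeros_{\mathbb{D}_1}(P_n)\longrightarrow \zeros_{\mathbb{D}_1}(P_{\infty})\quad\text{in } M_p(\mathbb{D}_1).
\]
Since Hurwitz's theorem in this form automatically takes care of the fact that $P_n$ is eventually not identically zero on $\mathbb{D}_1$ (the partial sums $P_n$ inherit non-triviality from their locally uniform limit $P_{\infty}$ on any compact subset of $\mathbb{D}_1$ that carries a non-zero value of $P_{\infty}$), the sequence $(\zeros_{\mathbb{D}_1}(P_n))_{n\geq 1}$ is well-defined for all sufficiently large $n$ on $\Omega_0\cap\Omega_1$, and the claimed a.s.~convergence follows.

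There is no real obstacle here beyond the housekeeping above: the only point worth checking is $\mathbb{P}\{P_{\infty}\equiv 0\}=0$, after which the result is a one-line application of the continuous mapping theorem to Proposition~\ref{prop:1}.
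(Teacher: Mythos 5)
Your proof is correct and follows the same route as the paper: apply the continuity of $\zeros_{\mathbb{D}_1}$ on $\mathcal{A}(\mathbb{D}_1)\setminus\{0\}$ (Hurwitz's theorem) to the a.s.\ convergence from Proposition~\ref{prop:1}. The only difference is that you explicitly spell out the housekeeping check that $\mathbb{P}\{P_{\infty}\equiv 0\}=0$, which the paper leaves implicit; that check is correct given $\mathbb{P}\{\xi=0\}<1$ and $b(k)>0$.
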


Outside the closed unit disk $\cl(\mathbb{D}_1)$, the series $P_{\infty}$ is always a.s.~divergent unless $\mathbb{P}\{\xi=0\}=1$. However, a simple inversion argument allows us to conclude the following
\begin{theorem}[Convergence outside the unit disk: polynomials]
\label{thm:1}
Assume~\eqref{eq:xi_log_moment}. Then the sequence of polynomials
$$
Q_n(z):=b_n^{-1}z^nP_n(z^{-1}),\quad n\in\mathbb{N},\quad z\in\mathbb{C},
$$
converges in distribution on $\mathcal{A}(\mathbb{D}_1)$ to a random analytic function
$$
Q_{\infty}(z):=\sum_{k\geq 0}\xi_k z^k,\quad z\in\mathbb{D}_1.
$$
\end{theorem}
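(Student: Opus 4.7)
My strategy is to exploit the i.i.d.\ structure of $(\xi_k)$ to reduce the theorem to the setting of Proposition~\ref{prop:1}. Expanding $Q_n$ and substituting $j=n-k$ gives
\[
Q_n(z) = \sum_{j=0}^n \frac{b(n-j)}{b(n)}\,\xi_{n-j}\,z^{j}.
\]
By the exchangeability of $(\xi_0,\ldots,\xi_n)$, the random polynomial $Q_n$ has the same law (as a random element of $\mathcal{A}(\mathbb{D}_1)$, in fact already as a random polynomial) as
\[
\widetilde Q_n(z) := \sum_{j=0}^n \frac{b(n-j)}{b(n)}\,\xi_j\,z^{j}.
\]
It therefore suffices to show $\widetilde Q_n \to Q_\infty$ almost surely on $\mathcal{A}(\mathbb{D}_1)$, i.e.\ locally uniformly, and then conclude the distributional statement for $Q_n$.

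To establish the locally uniform a.s.\ convergence, fix $r\in(0,1)$ and write, for $|z|\le r$,
\[
\widetilde Q_n(z) - Q_\infty(z) = \sum_{j=0}^n \left(\frac{b(n-j)}{b(n)}-1\right)\xi_j z^j \;-\; \sum_{j>n}\xi_j z^j.
\]
The second sum is the tail of the almost-surely convergent series defining $Q_\infty$, so it tends to zero uniformly on $\{|z|\le r\}$. For the first sum, I would run a dominated-convergence argument in the index $j$: regular variation gives $b(n-j)/b(n)\to 1$ for every fixed $j$, so each coefficient tends to zero almost surely. For the domination, \eqref{eq:slln_xi} yields $|\xi_j|\le (1+\varepsilon)^j$ a.s.\ for all large $j$, so, choosing $\varepsilon$ with $(1+\varepsilon)r<1$, the sequence $|\xi_j|r^j$ is a.s.\ summable. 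Potter's bound for the regularly varying function $b$ provides, for any small $\delta>0$,
\[
\frac{b(n-j)}{b(n)}\le 2\max\!\left(\left(1-\tfrac{j}{n}\right)^{\alpha+\delta},\left(1-\tfrac{j}{n}\right)^{\alpha-\delta}\right)
\qquad \text{whenever } n-j\ge X(\delta),
\]
while the Potter bound applied to $\ell$ controls $1/b(n)$ from above by a polynomial in $n$.

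The main technical obstacle is the regime $j$ close to $n$, where $(1-j/n)^{\alpha-\delta}$ can blow up for negative $\alpha$ (or, equivalently, where $1/b(n)$ is large). I plan to split the $j$-range at a fixed threshold $N$ and at $(1-\eta)n$: the first $N$ terms vanish in the limit by the pointwise convergence; on $N\le j\le (1-\eta)n$ the Potter bound yields a uniform constant $C_\eta$, so the contribution is dominated by $C_\eta\sum_{j\ge N}|\xi_j|r^j$, which is small once $N$ is large; and on $(1-\eta)n<j\le n$ the exponentially small factor $r^j\le r^{(1-\eta)n}$ dominates the at-most-polynomial growth of $b(n-j)/b(n)$ coming from the $1/b(n)$-factor, so this part tends to zero as $n\to\infty$. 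Sending first $n\to\infty$ and then $N\to\infty$ produces $\sup_{|z|\le r}|\widetilde Q_n(z)-Q_\infty(z)|\to 0$ a.s., which yields the claimed convergence in distribution of $Q_n$ to $Q_\infty$ on $\mathcal{A}(\mathbb{D}_1)$.
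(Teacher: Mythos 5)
Your proof is correct and follows essentially the same strategy as the paper's: pass to the reversed-coefficient polynomial by exchangeability, prove a.s.\ locally uniform convergence, and kill the top indices $j>(1-\eta)n$ via the polynomial bound $b(n)^{-1}\sum_{m\le\eta n}b(m)=O(n^A)$ beaten by the exponential decay coming from $r^{(1-\eta)n}$ and $|\xi_j|\le(1+\varepsilon)^j$. The only difference is cosmetic: for the bulk $j\le(1-\eta)n$ the paper compares $b(n-j)/b(n)$ to $((n-j)/n)^\alpha$ via the uniform convergence theorem and then applies dominated convergence, whereas you split at a fixed threshold $N$, using pointwise convergence of the coefficient below $N$ and a uniform Potter bound in the middle — both achieve the same end.
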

\begin{proof}
Observe that, for every $n\in\mathbb{N}$, the polynomial $Q_n$ has the same distribution as
$$
\hat{Q}_n(z):=\sum_{k=0}^{n}\frac{b_{n-k}}{b_n}\xi_k z^k,\quad z\in\mathbb{C}.
$$
Fix $\varepsilon\in (0,1)$ and decompose $\hat{Q}_n$ as
$$
\hat{Q}_n(z)=\sum_{k=0}^{\lfloor (1-\varepsilon) n\rfloor}\left(\frac{b_{n-k}}{b_n}-\left(\frac{n-k}{n}\right)^{\alpha}\right)\xi_k z^k+\sum_{k=0}^{\lfloor (1-\varepsilon) n\rfloor}\left(\frac{n-k}{n}\right)^{\alpha}\xi_k z^k+\sum_{k=\lfloor (1-\varepsilon) n\rfloor+1}^{n}\frac{b_{n-k}}{b_n}\xi_k z^k.
$$
The first term converges a.s.~to $0$ on $\mathcal{A}(\mathbb{D}_1)$ by the uniform convergence theorem for regularly varying functions, see~\cite[Theorem 1.5.2]{BGT}. The second sum converges a.s.~to $Q_{\infty}$ on $\mathcal{A}(\mathbb{D}_1)$ by the dominated convergence theorem since $((n-k)/n)^{\alpha}\leq \max(1,\varepsilon^{\alpha})$. Let $K$ be a fixed compact subset of $\mathbb{D}_1$ and $\rho:=\sup_{z\in K}|z|\in [0,1)$. Then
$$
\sup_{z\in K}\left|\sum_{k=\lfloor (1-\varepsilon) n\rfloor+1}^{n}\frac{b_{n-k}}{b_n}\xi_k z^k\right|\leq \rho^{(1-\varepsilon)n}\max_{0\leq k\leq n}|\xi_k|\frac{1}{b_n}\sum_{k=0}^{\lceil \varepsilon n\rceil}b_{k}.
$$
By the regular variation, there exists $A>0$ such that $b_n^{-1}\sum_{k=0}^{\lceil \varepsilon n\rceil}b_{k}=O(n^{A})$ as $n\to\infty$. Hence for any $\delta\in (0,1-\rho)$, the right-hand side of the last formula is bounded by ${\rm const}\cdot ((\rho+\delta)^{(1-\varepsilon)n}\max_{0\leq k\leq n}|\xi_k|)$, which converges to $0$ almost surely, as $n\to\infty$, in view of~\eqref{eq:slln_xi}. Thus,
$$
(\hat{Q}_n(z))_{z\in\mathbb{D}_1}~\toas~(Q_{\infty}(z))_{z\in\mathbb{D}_1},\quad n\to\infty,
$$
whence
$$
(Q_n(z))_{z\in\mathbb{D}_1}~\Longrightarrow~(Q_{\infty}(z))_{z\in\mathbb{D}_1},\quad n\to\infty
$$
on the space $\mathcal{A}(\mathbb{D}_1)$. The proof is complete.
\end{proof}

Hurwitz's theorem immediately implies
\begin{corollary}[Convergence outside the unit disk: zeros]\label{cor:outside_unit_disk}
Assume~\eqref{eq:xi_log_moment}. Then
$$
\zeros_{\mathbb{C}\setminus \cl(\mathbb{D}_1)}(P_n(z))~\Longrightarrow~\zeros_{\mathbb{C}\setminus \cl(\mathbb{D}_1)}(Q_{\infty}(1/z)),\quad n\to\infty.
$$
\end{corollary}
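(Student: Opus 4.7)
The plan is to deduce the corollary from Theorem~\ref{thm:1} by transporting the convergence $Q_n\Longrightarrow Q_\infty$ through the inversion $z\mapsto 1/z$ and then appealing to Hurwitz's theorem on the exterior domain $\mathbb{C}\setminus\cl(\mathbb{D}_1)$.

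First I would introduce the pullback map $\Phi\colon\mathcal{A}(\mathbb{D}_1)\to\mathcal{A}(\mathbb{C}\setminus\cl(\mathbb{D}_1))$ given by $(\Phi f)(z):=f(1/z)$, and verify its continuity with respect to local uniform convergence. The key observation is that the image under $z\mapsto 1/z$ of any compact $K\subset\mathbb{C}\setminus\cl(\mathbb{D}_1)$ is a compact subset of $\mathbb{D}_1\setminus\{0\}$ that stays bounded away from both $0$ and the unit circle, so locally uniform convergence on $\mathbb{D}_1$ passes to locally uniform convergence of the pullbacks on $\mathbb{C}\setminus\cl(\mathbb{D}_1)$. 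By the continuous mapping theorem, Theorem~\ref{thm:1} upgrades to $\Phi Q_n\Longrightarrow \Phi Q_\infty$ in $\mathcal{A}(\mathbb{C}\setminus\cl(\mathbb{D}_1))$. A direct computation gives $(\Phi Q_n)(z)=b_n^{-1}z^{-n}P_n(z)$, and since the prefactor $b_n^{-1}z^{-n}$ does not vanish on $\mathbb{C}\setminus\cl(\mathbb{D}_1)$, the measures $\zeros_{\mathbb{C}\setminus\cl(\mathbb{D}_1)}(\Phi Q_n)$ and $\zeros_{\mathbb{C}\setminus\cl(\mathbb{D}_1)}(P_n)$ coincide.

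It then remains to apply Hurwitz's theorem, i.e.\ the continuity of $\zeros_{\mathbb{C}\setminus\cl(\mathbb{D}_1)}$ on $\mathcal{A}(\mathbb{C}\setminus\cl(\mathbb{D}_1))\setminus\{0\}$ recalled just before Corollary~\ref{cor:2.1}, for which I need the limit $z\mapsto Q_\infty(1/z)$ to be almost surely not identically zero. This reduces to checking $Q_\infty\not\equiv 0$ a.s., which is immediate: the event $\{Q_\infty\equiv 0\}$ is contained in the intersection of the independent events $\{\xi_k=0\}$, $k\geq 0$, each of probability $\mathbb{P}\{\xi=0\}<1$, so it has probability $0$. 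A final application of the continuous mapping theorem, now to $\zeros_{\mathbb{C}\setminus\cl(\mathbb{D}_1)}$, delivers the stated weak convergence. The substantive content is carried by Theorem~\ref{thm:1} itself; no genuine obstacle arises, and the only point that warrants care is the continuity of the pullback $\Phi$, handled by the compactness remark above.
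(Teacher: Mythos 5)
Your proof is correct and follows essentially the same route the paper takes: the paper disposes of this corollary with the one-line remark ``Hurwitz's theorem immediately implies,'' and your argument simply spells out the underlying steps (continuity of the pullback by inversion, identification of the zero sets of $\Phi Q_n$ and $P_n$ on the exterior since the prefactor $b_n^{-1}z^{-n}$ is nonvanishing there, the a.s.\ nondegeneracy $Q_\infty\not\equiv 0$ needed to apply the continuous-mapping theorem with $\zeros_{\mathcal D}$). Nothing is missing.
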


\begin{ex}
Assume that $\xi$ has the standard complex Gaussian distribution. Then, $(Q_{\infty}(z))_{z\in\mathbb{D}_1}$ is a standard hyperbolic Gaussian analytic function, see~\cite[Chapter 5]{HKPV}. In particular, its point process of zeros is determinantal, see Theorem 5.1.1 in the same reference. Corollary~\ref{cor:outside_unit_disk} implies that the point process of zeros of $P_n$ outside $\cl(\mathbb{D}_1)$ converges, as $n\to\infty$, to the image of this determinantal process under the inversion $z\mapsto z^{-1}$.
\end{ex}

\section{Zeros near the the boundary of the unit disk}\label{sec:zeros_near_the_boundary}

We shall switch our attention now to the local behavior of $P_n$ and their zeros near the boundary $\partial \mathbb{D}_1$ of the unit disk, which is more intriguing. In this setting we shall work under the more restrictive moment assumptions on $\xi$. Namely, we shall assume that
\begin{equation}\label{eq:moment_assumptions_on_xi1}
\mathbb{E}[\xi]=0,\quad \sigma_1^2:=\mathbb{E}[(\Re \xi)^2]<\infty,\quad \sigma_2^2:=\mathbb{E}[(\Im \xi)^2]<\infty\quad\text{satisfy}\quad \sigma^2:=\sigma_1^2+\sigma_2^2>0.
\end{equation}
Without loss of generality we can also assume that
\begin{equation}\label{eq:moment_assumptions_on_xi2}
\Cov[\Re\xi,\Im\xi]=0
\end{equation}
because the general case can be reduced to~\eqref{eq:moment_assumptions_on_xi2} by replacing $(\xi_k)_{k\geq 0}$ with $(z^{\ast}\xi_k)_{k\geq 0}$ where the constant $z^{\ast}\in\mathbb{C}$ is chosen such that $\Cov[\Re(z^{\ast}\xi),\Im(z^{\ast}\xi)]=0$. This amounts to replacing $P_n(z)$ with $z^{\ast}P_n(z)$.

Under the above assumptions the series $P_{\infty}(z)$ can be convergent or divergent at $z\in\partial\mathbb{D}_1$ depending on $b$. For example, if $\alpha<-\nicefrac{1}{2}$ then the random series
$$
P_{\infty}(\ee^{\ii \phi})=\sum_{k\geq 0}b(k)\xi_k\ee^{\ii \phi k}
$$
converges absolutely by the Kolmogorov two series theorem, see~\cite[Chapter IV, \S 2]{Shiryaev}, because
$\sum_{k\geq 0}b^2(k)<\infty$ under the assumption $\alpha<-\nicefrac{1}{2}$. In this case, $\phi\mapsto P_{\infty}(\ee^{\ii \phi})$ is an a.s.~continuous $2\pi$-periodic function, see~\cite[Theorem 2]{Hunt:1951}. On the other hand, by the same theorem, if $\alpha>-\nicefrac{1}{2}$ and (for example) the support of $\xi$ is bounded, then the series $P_{\infty}(z)$ is almost surely divergent, for every fixed $z\in\partial\mathbb{D}_1$. In the boundary case $\alpha=-\nicefrac{1}{2}$ convergence/divergence is determined by the slowly varying factor $\ell$. Moreover, even when convergence holds, the function  $\phi\mapsto P_{\infty}(\ee^{\ii \phi})$ is not necessarily continuous and, in general, is defined only almost everywhere on $[0,2\pi)$ by the Paley-Zygmund theorem, see~\cite[Eq.~(2)]{Hunt:1951}.

\subsection{Case \texorpdfstring{$\alpha>-\nicefrac{1}{2}$}{alpha>-1/2}: the liquid phase}\label{sec:zeros_alpha>-1/2} Fix $0\leq \psi<2\pi$ and consider the sequence of transformed polynomials
$$
P_n\left(\ee^{\ii \psi+u/n}\right),\quad u\in\mathbb{C},\quad n\in\mathbb{N}.
$$
\begin{theorem}\label{thm:boundary:alpha>-1/2}
Assume~\eqref{eq:moment_assumptions_on_xi1},~\eqref{eq:moment_assumptions_on_xi2} and $\alpha>-\nicefrac{1}{2}$. Then, for every fixed $0\leq \psi<2\pi$,
$$
\left(\frac{P_n\left(\ee^{\ii \psi+u/n}\right)}{b(n)\sqrt{n}}\right)_{u\in\mathbb{C}}~\Longrightarrow~(G_{\psi}(u))_{u\in\mathbb{C}},\quad n\to\infty,
$$
on the space $\mathcal{A}(\mathbb{C})$, where $G_{\psi}=(G_{\psi}(u))_{u\in\mathbb{C}}$ is a centered Gaussian analytic function\footnote{A Gaussian analytic function is a random element taking values in $\mathcal A(\mathcal D)$ whose finite dimensional distributions are complex Gaussian.} with the following covariance structure: for $u_1,u_2\in\mathbb{C}$,
\begin{align} \label{eq:covariance:Ga}
\mathbb{E}[G_{\psi}(u_1)\overline{G_{\psi}(u_2)}]&=\sigma^2\int_{0}^{1}x^{2\alpha}\ee^{x(u_1+\overline{u_2})}{\rm d}x,\\
\label{eq:covariance:Gb}
\mathbb{E}[G_{\psi}(u_1)G_{\psi}(u_2)]&=
\begin{cases}
(\sigma_1^2-\sigma_2^2)\int_{0}^{1}x^{2\alpha}\ee^{x(u_1+u_2)}{\rm d}x,&\psi\in\{0,\pi\},\\
0,&\psi\in (0,\pi)\cup(\pi,2\pi).
\end{cases}
\end{align}
\end{theorem}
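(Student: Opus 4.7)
The standard strategy is to combine (i) convergence of finite-dimensional distributions of the scaled polynomials, viewed as random entire functions, with (ii) tightness on $\mathcal{A}(\mathbb{C})$; jointly these yield the desired weak convergence, see e.g.~\cite[\S 2]{Shirai:2012}. Since the integral $\int_0^1 x^{2\alpha}\ee^{x(u+\overline{v})}\d x$ is entire in $(u,\overline{v})$ for $\alpha>-\nicefrac{1}{2}$, the centered Gaussian analytic function $G_{\psi}$ with the prescribed covariance~\eqref{eq:covariance:Ga}--\eqref{eq:covariance:Gb} is well-defined as a random element of $\mathcal{A}(\mathbb{C})$, so identification of the limit reduces to matching covariances.

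For the finite-dimensional distributions, I fix $u_1,\ldots,u_m\in\mathbb{C}$ and consider the triangular array
$$X_{n,k}^{(j)}:=\frac{b(k)}{b(n)\sqrt{n}}\,\xi_k\,\ee^{\ii\psi k+u_j k/n},\qquad 1\le k\le n,\quad 1\le j\le m,$$
so that $P_n(\ee^{\ii\psi+u_j/n})/(b(n)\sqrt{n})$ equals $\sum_{k=1}^n X_{n,k}^{(j)}$ up to the negligible $k=0$ term $b(0)\xi_0/(b(n)\sqrt{n})\to 0$. A direct calculation using $\mathbb{E}[\xi]=0$ gives
$$\mathbb{E}\left[\Big(\sum_{k=1}^n X_{n,k}^{(i)}\Big)\overline{\Big(\sum_{k=1}^n X_{n,k}^{(j)}\Big)}\right]=\sigma^2\sum_{k=1}^n \frac{b(k)^2}{b(n)^2 n}\,\ee^{(u_i+\overline{u_j})k/n},$$
which converges to the right-hand side of~\eqref{eq:covariance:Ga} by Karamata's uniform convergence theorem~\cite[Thm.~1.5.2]{BGT} and a standard Riemann-sum approximation (the integrand is integrable at $0$ precisely because $\alpha>-\nicefrac{1}{2}$). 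The pseudo-covariance $\mathbb{E}[\sum_k X_{n,k}^{(i)}\sum_k X_{n,k}^{(j)}]$ carries an extra factor $\mathbb{E}[\xi^2]=\sigma_1^2-\sigma_2^2$ (from~\eqref{eq:moment_assumptions_on_xi2}) and an oscillatory factor $\ee^{2\ii\psi k}$; for $\psi\in\{0,\pi\}$ the latter is identically $1$, producing~\eqref{eq:covariance:Gb}. The Lindeberg condition for this array follows from $\mathbb{E}|\xi|^2<\infty$ together with $\max_{1\le k\le n}b(k)/(b(n)\sqrt{n})=o(1)$, which is immediate from Potter's bounds~\cite[Thm.~1.5.6]{BGT}. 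A complex-valued Lindeberg CLT then delivers the joint Gaussian limit with the required covariance structure.

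Tightness on $\mathcal{A}(\mathbb{C})$ rests on the standard second-moment criterion for sequences of random analytic functions: it suffices to exhibit a uniform bound $\sup_n\sup_{u\in K}\mathbb{E}|P_n(\ee^{\ii\psi+u/n})/(b(n)\sqrt{n})|^2<\infty$ on each compact $K\subset\mathbb{C}$. This is immediate from the variance computation above specialized to $u_i=u_j=u$, uniformly bounded on $K$ by finiteness and continuity of $u\mapsto\sigma^2\int_0^1 x^{2\alpha}\ee^{2(\Re u)x}\d x$. The Cauchy integral formula then upgrades the $L^2$-bound on a slightly larger compact to uniform-norm control needed for relative compactness in $\mathcal{A}(\mathbb{C})$, see e.g.~\cite[Lemma 2.6]{Shirai:2012}.

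The main technical obstacle is verifying that the oscillatory pseudo-covariance sum
$$S_n(u_i,u_j;\psi):=\sum_{k=1}^n \frac{b(k)^2}{b(n)^2 n}\,\ee^{(u_i+u_j)k/n}\,\ee^{2\ii\psi k}\longrightarrow 0\qquad\text{for }\psi\in(0,\pi)\cup(\pi,2\pi),$$
uniformly on compacts in $(u_i,u_j)$. The natural approach is Abel summation against the bounded partial sums $T_k:=\sum_{j=1}^k \ee^{2\ii\psi j}$, reducing the estimate to the total variation in $k$ of the smooth factor $f_n(k):=b(k)^2\ee^{(u_i+u_j)k/n}/(b(n)^2 n)$. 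For $\alpha\ge 0$ this variation is $O(1/n)$; for $\alpha\in(-\nicefrac{1}{2},0)$ the singularity of $x^{2\alpha}$ at the origin forces a variation of order $n^{-2\alpha-1}$, which is $o(1)$ \emph{precisely} because $\alpha>-\nicefrac{1}{2}$, explaining the critical role of the threshold. Controlling the slowly-varying ratio $\ell^2(k)/\ell^2(n)$ uniformly in $k$ via Potter's bounds and isolating the contribution of small indices $k$ are the remaining technical points one must check.
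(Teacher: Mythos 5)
Your overall architecture (convergence of finite-dimensional distributions via a complex Lindeberg CLT plus tightness via a uniform second-moment bound) matches the paper's proof. Two places where you diverge from the paper are worth noting. First, your Lindeberg verification through the single estimate $\max_{1\le k\le n}b(k)/(b(n)\sqrt n)=o(1)$ is a clean shortcut; the paper instead splits the index range into $k<\varepsilon n$ and $k\ge\varepsilon n$ and handles the two pieces separately. Both are valid, and your observation does follow from Potter's bounds together with $n^{\alpha+1/2}\ell(n)\to\infty$.

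Second, for the vanishing of the oscillatory pseudo-covariance sum $\frac{1}{nb^2(n)}\sum_k b^2(k)\,\ee^{wk/n}\,\ee^{2\ii\psi k}$ when $\psi\notin\{0,\pi\}$, you propose Abel summation against the bounded partial sums $T_k$, reducing the estimate to the total variation of $f_n(k)=b^2(k)\ee^{wk/n}/(nb^2(n))$. The paper takes a different route: it first splits off the contribution of $k<\varepsilon n$ (bounded by $O(\varepsilon^{1+2\alpha})$ via Karamata), then uses the uniform convergence theorem to replace $b^2(k)/b^2(n)$ by $(k/n)^{2\alpha}$ on the range $[\varepsilon n,n]$, and finally applies a dedicated Riemann--Lebesgue lemma (Proposition~\ref{prop:riemann_lebesgue}) for the continuous integrand $x^{2\alpha}\ee^{xw}$. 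Interestingly, the paper does use your Abel summation device in the neighboring case $\alpha=-\nicefrac{1}{2}$ (see the computation starting at~\eqref{eq:summation_by_parts1}), so the approach is sound; however, your stated variation bound ``of order $n^{-2\alpha-1}$'' is only correct for the pure power $b(x)=x^{\alpha}$. For a general regularly varying $b=x^{\alpha}\ell(x)$ the total variation of $k\mapsto b^2(k)/(nb^2(n))$ splits into a Karamata-type term $\frac{1}{nb^2(n)}\sum_k k^{2\alpha-1}\ell^2(k)=O(1/n)$ and a slow-variation term $\frac{1}{nb^2(n)}\sum_k k^{2\alpha}\,|\ell^2(k+1)-\ell^2(k)|$; the latter is $o(1)$ only because $|\ell^2(k+1)-\ell^2(k)|=o(\ell^2(k))$ for large $k$, which itself requires the uniform convergence theorem with $\lambda\to 1$. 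You gesture at this (``controlling the slowly-varying ratio \ldots and isolating \ldots small indices $k$ are the remaining technical points''), but as written the variation estimate oversimplifies and would not by itself close the argument for a general slowly varying $\ell$. Either supply the finer two-term variation estimate or adopt the paper's truncation-then-Riemann--Lebesgue scheme, which sidesteps variation estimates entirely.
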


\medskip

\begin{rem}\label{rem:integral_representation}
The process $u\mapsto G_{\psi}(u)$ possesses a neat integral representation. Indeed, let $B_1$ and $B_2$ be two independent standard real Brownian motions and $(B(t))_{t\in [0,1]}$ a $\mathbb{C}$-valued Brownian motion  given by
$$
B(t):=
\begin{cases}
\sigma_1 B_1(t)+\ii \sigma_2 B_2(t),&\psi\in\{0,\pi\},\\
\nicefrac{1}{\sqrt{2}}(\sigma B_1(t)+\ii \sigma B_2(t)),&\psi\in(0,\pi)\cup(\pi,2\pi).\\
\end{cases}
$$
By comparing the covariances we conclude that the following equality in distribution holds true:
\begin{equation}\label{eq:integral_representation_BM}
(G_{\psi}(u))_{u\in\mathbb{C}}\overset{{\rm d}}{=}\left(\int_0^1 t^{\alpha}\ee^{u t}{\rm d}B(t)\right)_{u\in\mathbb{C}},
\end{equation}
where the integral is understood in the Skorokhod sense or, alternatively, as a pathwise integral obtained via integration by parts
$$
\int_0^1 t^{\alpha}\ee^{u t}{\rm d}B(t)=B(t)t^{\alpha}\ee^{u t}\Big|_0^1-\int_0^1 B(t){\rm d}(t^{\alpha}\ee^{u t})=B(1)\ee^{u}-\int_0^1 B(t)(\alpha t^{\alpha-1}\ee^{ut }+ ut^{\alpha}\ee^{ut}){\rm d}t.
$$
Here we have used that $t^{\beta}B(t)\toas 0$, as $t\to 0+$, for any $\beta>-\nicefrac{1}{2}$. Observe that the latter limit relation also secures the Riemann integrability of $t\mapsto B(t)t^{\alpha-1}$ on $[0,1]$ when $\alpha>-\nicefrac{1}{2}$. Interestingly, Gaussian processes with an integral representation very similar to that in~\eqref{eq:integral_representation_BM} appear in the study of zeros of random Dirichlet series; see~\cite[Section 2.3]{Buraczewski+Dong+Iksanov+Marynych:2023}. The only difference is that, in the case of random Dirichlet series, the domain of integration $[0,1]$ is replaced by $[0, \infty)$.
\end{rem}

\begin{rem}
Observe that the normalization $b(n)\sqrt{n}$ in Theorem~\ref{thm:boundary:alpha>-1/2} satisfies
$$
b(n)\sqrt{n}~\sim~\left((1+2\alpha)\sum_{k=0}^{n}b^2(k)\right)^{\nicefrac{1}{2}}~\sim~\sqrt{(1+2\alpha)}(L(n))^{\nicefrac{1}{2}},
$$
where $L$ is defined in formula~\eqref{eq:L_definition} below. This follows from the direct half of Karamata’s theorem, see~\cite[Proposition 1.5.8]{BGT}. Consequently, the term $c_n = n^{\alpha+\nicefrac{1}{2}}\ell(n) = b(n)\sqrt{n}$ in the last row of Table~\ref{tab:1} may be replaced by $\bigl((1+2\alpha)L(n)\bigr)^{\nicefrac{1}{2}}$. 
\end{rem}

\begin{proof}
Since $P_n\left(\ee^{\ii \psi+u/n}\right)$ is the sum of independent centered random variables with the finite second moments, the proof amounts to checking: (a) convergence of the covariances; (b) the Lindeberg-Feller condition; (c) tightness in $\mathcal{A}(\mathbb{C})$. Throughout the proof let $K$ be a fixed compact set in $\mathbb{C}$.

\noindent
{\sc Convergence of covariances.} Observe that
$$
\mathbb{E}\left[\frac{P_n\left(\ee^{\ii \psi+u_1/n}\right)}{b(n)\sqrt{n}}\times \frac{P_n\left(\ee^{\ii \psi+u_2/n}\right)}{b(n)\sqrt{n}}\right]=\frac{\mathbb{E}[\xi^2]}{nb^2(n)}\sum_{k=0}^{n}b^2(k)\ee^{k(u_1+u_2)/n}\ee^{2\ii \psi k}
$$
and
\begin{equation}\label{eq:covariance}
\mathbb{E}\left[\frac{P_n\left(\ee^{\ii \psi+u_1/n}\right)}{b(n)\sqrt{n}}\times \frac{\overline{P_n\left(\ee^{\ii \psi+u_2/n}\right)}}{b(n)\sqrt{n}}\right]=\frac{\mathbb{E}[|\xi|^2]}{nb^2(n)}\sum_{k=0}^{n}b^2(k)\ee^{k(u_1+\overline{u_2})/n}.
\end{equation}
For the convergence of covariances, it suffices to check that, for every $w\in\mathbb{C}$,
\begin{equation}\label{eq:boundary:alpha>-1/2_main_covariance_convergence}
\lim_{n\to\infty}\frac{1}{nb^2(n)}\sum_{k=0}^{n}b^2(k)\ee^{k w/n}\ee^{2\ii \psi k}=
\begin{cases}
\int_{0}^{1}x^{2\alpha}\ee^{x w }{\rm d}x,&\psi\in\{0,\pi\},\\
0,&\psi\in (0,\pi)\cup(\pi,2\pi).
\end{cases}
\end{equation}
Fix $\varepsilon\in(0,1)$ and write
\begin{multline*}
\frac{1}{nb^2(n)}\sum_{k=0}^{n}b^2(k)\ee^{k w/n}\ee^{2\ii \psi k}=\frac{1}{n}\sum_{k=\lfloor n\varepsilon\rfloor}^{n}\frac{b^2(k)}{b^2(n)}\ee^{kw/n}\ee^{2\ii \psi k}
+\frac{1}{n}\sum_{k=0}^{\lfloor n\varepsilon\rfloor-1}\frac{b^2(k)}{b^2(n)}\ee^{kw/n}\ee^{2\ii \psi k}\\
=:Z_1(\varepsilon,n)+Z_2(\varepsilon,n).
\end{multline*}
First, we verify that
\begin{equation}\label{eq:covariances_proof1}
\lim_{\varepsilon\to 0+}\limsup_{n\to\infty}|Z_2(\varepsilon,n)|=0.
\end{equation}
Indeed,
\begin{align*}
|Z_2(\varepsilon,n)|\leq \ee^{\Re (w)}\frac{1}{nb^2(n)}\sum_{k=0}^{\lfloor n\varepsilon\rfloor}b^2(k).
\end{align*}
Using that $\alpha>-\nicefrac{1}{2}$, we infer $\sum_{k=0}^{n}b^2(k)\sim (1+2\alpha)^{-1}nb^2(n)$, as $n\to\infty$, see~\cite[Proposition 1.5.8]{BGT}. Hence
$$
\limsup_{n\to\infty}|Z_2(\varepsilon,n)|\leq \frac{\ee^{\Re (w)}}{1+2\alpha}\varepsilon^{1+2\alpha}
$$
and~\eqref{eq:covariances_proof1} follows.

The term $Z_1(\varepsilon,n)$ can be further decomposed as follows
\begin{multline*}
\frac{1}{n}\sum_{k=\lfloor n\varepsilon\rfloor}^{n}\frac{b^2(k)}{b^2(n)}\ee^{kw/n}\ee^{2\ii \psi k}=
\frac{1}{n}\sum_{k=\lfloor n\varepsilon\rfloor}^{n}\left(\frac{b^2(k)}{b^2(n)}-\left(\frac{k}{n}\right)^{2\alpha}\right)\ee^{kw/n}\ee^{2\ii \psi k}+\frac{1}{n}\sum_{k=\lfloor n\varepsilon\rfloor}^{n}\left(\frac{k}{n}\right)^{2\alpha}\ee^{kw/n}\ee^{2\ii \psi k}\\
=:Z_{11}(\varepsilon,n)+Z_{12}(\varepsilon,n).
\end{multline*}
By the uniform convergence theorem for regularly varying functions, see~\cite[Theorem 1.5.2]{BGT},
$$
\lim_{n\to\infty}\sup_{\lfloor n\varepsilon\rfloor\leq k\leq n}\left|\frac{b^2(k)}{b^2(n)}-\frac{k^{2\alpha}}{n^{2\alpha}}\right|=0.
$$
Thus, for every $\delta>0$, there exists $n_0=n_0(\delta)\in\mathbb{N}$ such that for all $n\geq n_0$,
$$
|Z_{11}(\varepsilon,n)|\leq \frac{\delta}{n}\sum_{k=\lfloor n\varepsilon\rfloor}^{n}\ee^{k\Re(w)/n}~\to~\delta\int_{\varepsilon}^{1}\ee^{\Re(w)x}{\rm d}x,\quad n\to\infty.
$$
Since $\delta>0$ is arbitrary
$$
\limsup_{n\to\infty}|Z_{11}(\varepsilon,n)|=0.
$$
Summarizing, we have checked that~\eqref{eq:boundary:alpha>-1/2_main_covariance_convergence} is a consequence of
$$
\lim_{\varepsilon\to 0+}\lim_{n\to\infty}\frac{1}{n}\sum_{k=\lfloor n\varepsilon\rfloor}^{n}\left(\frac{k}{n}\right)^{2\alpha}\ee^{kw/n}\ee^{2\ii \psi k}=\begin{cases}
\int_{0}^{1}x^{2\alpha}\ee^{x w }{\rm d}x,&\psi\in \{0,\pi\},\\
0,&\psi\in(0,\pi)\cup(\pi,2\pi).
\end{cases}
$$
The latter formula is secured by Proposition~\ref{prop:riemann_lebesgue} in the Appendix.

\noindent
{\sc The Lindeberg-Feller condition.} Fix $\delta>0$ and put $V_{n,k}(u,\psi):=\frac{b(k)\xi_k\ee^{i\psi k +u k/n}}{\sqrt{n}b_n}$, $k=0,\ldots,n$. We need to verify that, for every fixed $u\in K$,
\begin{equation}\label{eq:covariances_proof2}
\lim_{n\to\infty}\sum_{k=0}^{n}\mathbb{E}[|V_{n,k}|^2\1_{\{|V_{n,k}|\geq \delta\}}]=0.
\end{equation}
Using that the modulus of the exponent in $V_{n,k}$ is upper bounded by $\ee^{\Re(u)}$, we can estimate
\begin{align*}
\sum_{k=0}^{n}\mathbb{E}[|V_{n,k}|^2\1_{\{|V_{n,k}|\geq \delta\}}]\leq \frac{C}{n}\sum_{k=0}^{n}\frac{b^2(k)}{b^2(n)}\mathbb{E}[|\xi|^2\1_{\{|\xi|\geq \delta_1\sqrt{n}b(n)/b(k)\}}],
\end{align*}
for some $\delta_1\in (0,\delta)$. Fix again some $\varepsilon\in (0,1)$ and note that the uniform convergence theorem for regularly varying functions yields that for arbitrarily large $M>0$,
\begin{multline*}
\lim_{n\to\infty}\frac{1}{n}\sum_{k=\lfloor \varepsilon n\rfloor}^{n}\frac{b^2(k)}{b^2(n)}\mathbb{E}[|\xi|^2\1_{\{|\xi|\geq \delta_1\sqrt{n}b(n)/b(k)\}}]\leq
\lim_{n\to\infty}\frac{1}{n}\sum_{k=\lfloor \varepsilon n\rfloor}^{n}\frac{b^2(k)}{b^2(n)}\mathbb{E}[|\xi|^2\1_{\{|\xi|\geq \delta_1 M b(n)/b(k)\}}]\\
\leq \int_{\varepsilon}^{1}x^{2\alpha}\mathbb{E}[|\xi|^2\1_{\{|\xi|\geq \delta_1 M \varepsilon^{-2\alpha}\}}]{\rm d}x.
\end{multline*}
By the dominated convergence theorem, the right-hand side tends to zero as $M\to+\infty$ because $\mathbb{E}[|\xi|^2]<\infty$. On the other hand,
$$
\frac{1}{n}\sum_{k=0}^{\lfloor \varepsilon n\rfloor}\frac{b^2(k)}{b^2(n)}\mathbb{E}[|\xi|^2\1_{\{|\xi|\geq \delta_1\sqrt{n}b(n)/b(k)\}}]\leq \frac{\sigma^2}{n}\sum_{k=0}^{\lfloor \varepsilon n\rfloor}\frac{b^2(k)}{b^2(n)}.
$$
As explained above in the analysis of $Z_2(\varepsilon,n)$, the right-hand side converges to zero after sending first $n\to\infty$ and then $\varepsilon\to 0+$. Thus,~\eqref{eq:covariances_proof2} is proved.

\noindent
{\sc Tightness}. Using~\cite[Lemma 4.2]{Kab+Klim:2014} we infer that it suffices to show that for every fixed $R>0$,
$$
\sup_{n\in\mathbb{N}}\sup_{|u|\leq R}\frac{\mathbb{E}|P_n(\ee^{\ii \psi + u/n})|^2}{nb^2 (n)}<\infty.
$$
Using~\eqref{eq:covariance} with $u_1=u_2$ we conclude
$$
\sup_{|u|\leq R}\frac{\mathbb{E}|P_n(\ee^{\ii \psi + u/n})|^2}{nb^2 (n)}\leq \ee^{2R}\frac{\sigma^2}{nb^2(n)}\sum_{k=0}^{n}b^2(k)~\to~\frac{\ee^{2R}\sigma^2}{1+2\alpha},\quad n\to\infty.
$$
The proof is complete.
\end{proof}

\begin{corollary}[Local convergence of zeros near the unit circle, the liquid phase $\alpha>-\nicefrac{1}{2}$]
Assume~\eqref{eq:moment_assumptions_on_xi1},~\eqref{eq:moment_assumptions_on_xi2} and $\alpha>-\nicefrac{1}{2}$. Then, for every fixed $0\leq \psi<2\pi$,
$$
\zeros_{\mathbb{C}}(P_n(\ee^{\ii \psi+(\cdot)/n})~\Longrightarrow~(\zeros_{\mathbb{C}}(G_{\psi}(\cdot)),\quad n\to\infty,
$$
on the space $M_p(\mathbb{C})$, where $G_{\psi}$ is a random Gaussian analytic function defined in Theorem~\ref{thm:boundary:alpha>-1/2}.
\end{corollary}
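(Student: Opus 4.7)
The plan is to deduce this corollary directly from Theorem~\ref{thm:boundary:alpha>-1/2} by invoking the continuous mapping theorem and Hurwitz's theorem. The first step is to note that the zero set of an analytic function is invariant under multiplication by a nonzero scalar, so for all $n$ large enough (so that $b(n)>0$),
$$
\zeros_{\mathbb{C}}\!\left(P_n(\ee^{\ii\psi+(\cdot)/n})\right)=\zeros_{\mathbb{C}}\!\left(\frac{P_n(\ee^{\ii\psi+(\cdot)/n})}{b(n)\sqrt{n}}\right)
$$
as random elements of $M_p(\mathbb{C})$. Thus it suffices to establish the convergence of the point processes of zeros of the rescaled random analytic functions on the left-hand side of Theorem~\ref{thm:boundary:alpha>-1/2}.

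The second step is to apply Theorem~\ref{thm:boundary:alpha>-1/2}, which yields
$$
\frac{P_n(\ee^{\ii\psi+(\cdot)/n})}{b(n)\sqrt{n}}~\Longrightarrow~G_{\psi}(\cdot),\quad n\to\infty,
$$
on the space $\mathcal{A}(\mathbb{C})$. As recalled in the paragraph preceding Corollary~\ref{cor:2.1}, the zero-set mapping $\zeros_{\mathbb{C}}\colon\mathcal{A}(\mathbb{C})\setminus\{0\}\to M_p(\mathbb{C})$ is continuous by Hurwitz's theorem (see \cite[Lemma~2.2]{Shirai:2012}). The continuous mapping theorem therefore reduces the claim to checking that $\mathbb{P}\{G_{\psi}\equiv 0\}=0$, so that $G_{\psi}$ almost surely lies in the domain of continuity of $\zeros_{\mathbb{C}}$.

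This non-degeneracy is immediate from~\eqref{eq:covariance:Ga}: setting $u_1=u_2=0$ gives
$$
\mathbb{E}|G_{\psi}(0)|^2=\sigma^2\int_0^1 x^{2\alpha}\,{\rm d}x=\frac{\sigma^2}{2\alpha+1}>0,
$$
using $\alpha>-\nicefrac{1}{2}$ and $\sigma^2>0$, so $G_{\psi}(0)\neq 0$ almost surely and in particular $G_{\psi}\not\equiv 0$ almost surely. There is no substantive obstacle here; the only point requiring any thought is the verification of non-degeneracy of the Gaussian limit, which is needed to invoke the continuous mapping theorem at the (almost surely attained) continuity points of $\zeros_{\mathbb{C}}$.
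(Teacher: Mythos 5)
Your proof is correct and follows precisely the route the paper intends: the paper states the corollary without a separate proof, relying (as for Corollary~\ref{cor:2.1}) on the continuity of $\zeros_{\mathbb{C}}$ on $\mathcal{A}(\mathbb{C})\setminus\{0\}$ via Hurwitz's theorem combined with the functional convergence of Theorem~\ref{thm:boundary:alpha>-1/2}. Your explicit verification that $\mathbb{P}\{G_\psi\equiv 0\}=0$ via the positivity of $\mathbb{E}|G_\psi(0)|^2=\sigma^2/(2\alpha+1)$ is the one technical point worth spelling out, and you have done so correctly.
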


\begin{ex}\label{ex:edelman_kostlan}
It is apparent from Eqs.~\eqref{eq:covariance:Ga} -- \eqref{eq:covariance:Gb} that the marginal distributions of the Gaussian analytic function $G_{\psi}$ are isotropic for every $\psi\in [0,2\pi)$  if $\sigma_1=\sigma_2$ and for $\psi\in (0,\pi)\cup (\pi,2\pi)$ otherwise. Assuming $G_{\psi}$ has isotropic distribution, the Edelman-Kostlan formula, see \cite[Eq.~(2.4.8)]{HKPV}, yields the first intensity $\rho_1(\alpha, \cdot)$ of zeros of $G_\psi$ in the form
\begin{equation}\label{eq:rho}
\rho_1(\alpha; u)=\frac{1}{\pi}\frac{\partial}{\partial u}\frac{\partial}{\partial \overline{u}}\log \Cov[G_{\psi}(u),G_{\psi}(u)]=\frac{1}{\pi}\left(\frac{\Phi_{2\alpha+2}(u+\overline{u})}{\Phi_{2\alpha}(u+\overline{u})}-\frac{\Phi^2_{2\alpha+1}(u+\overline{u})}{\Phi^2_{2\alpha}(u+\overline{u})}\right),
\end{equation}
where
\begin{equation}\label{eq:Phi_def}
\Phi_{\beta}(u):=\int_0^1 x^{\beta}\ee^{u x}{\rm d}x,\quad \beta>-1,\quad u\in\mathbb{C}.
\end{equation}
The right-hand side of equation \eqref{eq:rho} is the intensity of the limiting local point process of zeros of the random polynomials $P_n$  in the scaling window $z=\ee^{u/n+\ii \psi}$ attached to point $\ee^{\ii \psi}$. One observes that the intensity is constant in the direction tangential to the unit circle (this direction corresponds to purely imaginary values of $u$). Thus,
equation~\eqref{eq:rho} combined with Theorem~\ref{thm:boundary:alpha>-1/2} suggests that the expected number of zeros of $P_n$ in the annulus
$$\mathbb{A}(\ee^{s_1/n},\ee^{s_2/n})=\{z\in \mathbb{C}: \ee^{s_1/n} \leq |z| \leq \ee^{s_2/n}\}, \quad s_1 < s_2,
$$
is asymptotically equivalent to
$2\pi n \int_{s_1}^{s_2} \rho_1 (\alpha; r) {\rm d} r $, that is,
\begin{equation}\label{eq:intenstiy_rho_1_int}
\lim_{n\to\infty} \frac{1}{n} \mathbb{E} \big[\zeros_{\mathbb{A}(\ee^{s_1/n},\ee^{s_2/n})} (P_n)\big] = \frac{\Phi_{2\alpha+1}(2s_2)}{\Phi_{2\alpha}(2s_2)}-\frac{\Phi_{2\alpha+1}(2s_1)}{\Phi_{2\alpha}(2s_1)}.
\end{equation}
If $\xi$ has the standard complex normal distribution, this can be verified using the Edelman-Kostlan formula.  Indeed, let $p_n (\alpha; r)$ stand for the radial intensity \footnote{This means that  the expected number of zeros in a domain  $\{z\in \mathbb{C}: |z| \in (r_1,r_2),\; \arg z \in (\psi_1, \psi_2)  \}$ is given by the integral $(\psi_2 - \psi_1) \int_{r_1}^{r_2} p_n (\alpha; r)\d r $.} of $\zeros_{\mathbb{C}} (P_n)$.  Then, by the Edelman-Kostlan formula,
\begin{align}\label{eq:RadialIntensity}
p_n (\alpha; r) =  \frac{1}{\pi r} \frac{S_n(2+2\alpha; \ell; r^2) S_n(2\alpha; \ell;  r^2) - S_n^2(1+2\alpha; \ell;  r^2)}{S_n^2(2\alpha; \ell;  r^2)},\quad r>0,
\end{align}
where $S_n(\gamma;\ell;q) := \sum_{k=1}^n k^{\gamma} \ell^2(k) q^{k}
$, and
in turn, by \eqref{eq:boundary:alpha>-1/2_main_covariance_convergence},
\begin{align}\label{eq:RadialIntensity1}
\lim_{n\to\infty} \frac{1}{n^2} p_n (\alpha; \ee^{s/n}) =  \frac{1}{\pi} \left(
\frac{\Phi_{2\alpha+2}(2s)}{\Phi_{2\alpha}(2s)}-\frac{\Phi^2_{2\alpha+1}(2s)}{\Phi^2_{2\alpha}(2s)}
\right),\quad s\in\mathbb{R}.
\end{align}
If $\alpha=0$, then the right-hand side simplifies to
\begin{equation}\label{eq:ibragimov+zeitouni:1997}
\lim_{n\to\infty} \frac{1}{n^2} p_n (0; \ee^{s/n})=
\frac{1}{4\pi s^2}\left(1-\left(\frac{s}{\sinh s} \right)^2
\right),\quad s\in\mathbb{R}.
\end{equation}
Formula~\eqref{eq:ibragimov+zeitouni:1997} was derived in~\cite{Ibragimov+Zeitouni:1997} for the Kac polynomials ($\alpha = 0$) with the random coefficients satisfying some regularity conditions. 
It is natural to expect that the generalisation of \eqref{eq:ibragimov+zeitouni:1997} to $b(k)=k^{\alpha}\ell (k)$, $\alpha >-\nicefrac{1}{2}$, see Eq.~\eqref{eq:intenstiy_rho_1_int}, which we established for the isotropic Gaussian $\xi_k$ also holds for all probability distributions with finite second moment, although being mostly focused on the \emph{local} behavior of random zeros we do not pursue this avenue in this paper.
\end{ex}

Using the same method we can prove a slightly more general version of Theorem~\ref{thm:boundary:alpha>-1/2} with the joint convergence for different phases $\psi_1,\ldots,\psi_m$.

\begin{theorem}\label{thm:boundary:alpha>-1/2_joint}
Assume~\eqref{eq:moment_assumptions_on_xi1},~\eqref{eq:moment_assumptions_on_xi2} and $\alpha>-\nicefrac{1}{2}$. Then, for every fixed $m\in\mathbb{N}$ and $0\leq \psi_1<\psi_2<\cdots<\psi_m<2\pi$,
$$
\left(\frac{P_n\left(\ee^{\ii \psi_1+u/n}\right)}{b(n)\sqrt{n}},\ldots,\frac{P_n\left(\ee^{\ii \psi_m+u/n}\right)}{b(n)\sqrt{n}}\right)_{u\in\mathbb{C}}~\Longrightarrow~(G_{\psi_1}(u),\ldots,G_{\psi_m}(u))_{u\in\mathbb{C}},\quad n\to\infty,
$$
on the space $\mathcal{A}^m(\mathbb{C})$, where the cross-covariances are given by
\begin{align*}
\mathbb{E}[G_{\psi_i}(u_1)\overline{G_{\psi_{j}}(u_2)}]&=0,\quad u_1,u_2\in\mathbb{C},\\
\mathbb{E}[G_{\psi_i}(u_1)G_{\psi_j}(u_2)]&=
\begin{cases}
(\sigma_1^2-\sigma_2^2)\int_{0}^{1}x^{2\alpha}\ee^{x(u_1+u_2)}{\rm d}x,&\text{if }\psi_i+\psi_j=2\pi,\\
0,&\text{if }\psi_i+\psi_j\neq 2\pi,
\end{cases}
\end{align*}
for $i\neq j$.
\end{theorem}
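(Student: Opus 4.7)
The plan is to retrace the three-step argument used in the proof of Theorem~\ref{thm:boundary:alpha>-1/2} (convergence of complex covariances, Lindeberg--Feller, tightness), upgrading only the covariance bookkeeping to the multi-phase setting. Tightness in the product space $\mathcal{A}^m(\mathbb{C})$ is equivalent to coordinate-wise tightness, so it is inherited directly from the single-phase statement. Likewise, the Lindeberg--Feller condition for any linear combination of the $m$ component partial sums reduces to the single-phase condition already verified, because the truncation moment bound used there is insensitive to the phase $\psi$.

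The only genuinely new ingredient is the computation of the cross-covariances. For $i\ne j$ and $u_1,u_2\in\mathbb{C}$, a direct computation gives
\begin{align*}
\mathbb{E}\!\left[\frac{P_n(\ee^{\ii\psi_i+u_1/n})}{b(n)\sqrt{n}}\cdot\overline{\frac{P_n(\ee^{\ii\psi_j+u_2/n})}{b(n)\sqrt{n}}}\right]
&=\frac{\mathbb{E}[|\xi|^2]}{nb^2(n)}\sum_{k=0}^n b^2(k)\,\ee^{k(u_1+\overline{u_2})/n}\,\ee^{\ii(\psi_i-\psi_j)k},\\
\mathbb{E}\!\left[\frac{P_n(\ee^{\ii\psi_i+u_1/n})}{b(n)\sqrt{n}}\cdot\frac{P_n(\ee^{\ii\psi_j+u_2/n})}{b(n)\sqrt{n}}\right]
&=\frac{\mathbb{E}[\xi^2]}{nb^2(n)}\sum_{k=0}^n b^2(k)\,\ee^{k(u_1+u_2)/n}\,\ee^{\ii(\psi_i+\psi_j)k}.
\end{align*}
Both sums have the same structure as those handled in the single-phase proof: split the range into $k<\varepsilon n$ and $k\ge\varepsilon n$, use the uniform convergence theorem for regularly varying functions to replace $b^2(k)/b^2(n)$ by $(k/n)^{2\alpha}$ on the upper range, and send $\varepsilon\to 0+$. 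This reduces matters to evaluating
\[
\lim_{n\to\infty}\frac{1}{n}\sum_{\lfloor\varepsilon n\rfloor\le k\le n}\left(\frac{k}{n}\right)^{2\alpha}\ee^{kw/n}\ee^{\ii\phi k}
\]
with $\phi=\psi_i-\psi_j$ (resp.\ $\phi=\psi_i+\psi_j$). By Proposition~\ref{prop:riemann_lebesgue} this limit vanishes whenever $\phi\not\equiv 0\pmod{2\pi}$, and equals $\int_0^1 x^{2\alpha}\ee^{xw}\,\d x$ otherwise.

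Since $\psi_i\ne\psi_j$ lie in $[0,2\pi)$, the difference satisfies $\psi_i-\psi_j\in(-2\pi,2\pi)\setminus\{0\}$, so the first cross-covariance always vanishes, giving $\mathbb{E}[G_{\psi_i}(u_1)\overline{G_{\psi_j}(u_2)}]=0$. For the pseudo-covariance one has $\psi_i+\psi_j\in(0,4\pi)$, which is an integer multiple of $2\pi$ precisely when $\psi_i+\psi_j=2\pi$; in that exceptional case the oscillation disappears and, using $\mathbb{E}[\xi^2]=\sigma_1^2-\sigma_2^2$ under~\eqref{eq:moment_assumptions_on_xi2}, one recovers exactly the integral $(\sigma_1^2-\sigma_2^2)\int_0^1 x^{2\alpha}\ee^{x(u_1+u_2)}\,\d x$ claimed in the theorem.

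The main step requiring care is precisely this resonance bookkeeping: the two potentially singular cases are $\psi_i=\psi_j$ (excluded by the strict ordering) and $\psi_i+\psi_j\in 2\pi\mathbb{Z}$ (realized only when $\psi_i+\psi_j=2\pi$, since both phases are in $[0,2\pi)$ and strictly ordered). Once the covariance structure is identified, the limiting process in $\mathcal{A}^m(\mathbb{C})$ is uniquely characterized by the Gaussianity of all finite-dimensional marginals, and combining (a)--(c) via the standard triangular-array CLT for complex-valued analytic functions used in Theorem~\ref{thm:boundary:alpha>-1/2} concludes the argument.
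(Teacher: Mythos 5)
Your proposal is correct and follows essentially the same route as the paper: the paper's own treatment of this theorem is precisely to write down the two cross-covariance identities you display, reduce them to the single-phase Riemann--Lebesgue limit (relation~\eqref{eq:boundary:alpha>-1/2_main_covariance_convergence}, itself a consequence of Proposition~\ref{prop:riemann_lebesgue}), and then note that tightness and the Lindeberg--Feller condition carry over componentwise from Theorem~\ref{thm:boundary:alpha>-1/2}, leaving the remaining details to the reader. Your resonance bookkeeping for $\psi_i-\psi_j$ and $\psi_i+\psi_j$ and the identification $\mathbb{E}[\xi^2]=\sigma_1^2-\sigma_2^2$ under assumptions~\eqref{eq:moment_assumptions_on_xi1}--\eqref{eq:moment_assumptions_on_xi2} are exactly what is needed to obtain the stated cross-covariance structure.
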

The proof of Theorem~\ref{thm:boundary:alpha>-1/2_joint} relies on the formulae
$$
\frac{1}{n b^2(n)}\mathbb{E}[P_n(\ee^{\ii \psi_i+u_1/n})\times \overline{P_n(\ee^{\ii \psi_i+u_2/n})}]=\frac{\sigma^2}{n}\sum_{k=0}^{n}\frac{b^2(k)}{b^2(n)}\ee^{k(u_1+\overline{u_2})/n}\ee^{\ii k (\psi_i-\psi_j)}
$$
and
$$
\frac{1}{n b^2(n)}\mathbb{E}[P_n(\ee^{\ii \psi_i+u_1/n})\times P_n(\ee^{\ii \psi_i+u_2/n})]=\frac{(\sigma_1^2-\sigma_2^2)}{n}\sum_{k=0}^{n}\frac{b^2(k)}{b^2(n)}\ee^{k(u_1+u_2)/n}\ee^{\ii k (\psi_i+\psi_j)}
$$
in conjunction with relation~\eqref{eq:boundary:alpha>-1/2_main_covariance_convergence}. We omit the details here and instead provide a complete explanation in the next subsection, which addresses a similar situation for the case $\alpha\leq -\nicefrac{1}{2}$.

\subsection{Case \texorpdfstring{$\alpha\leq-\nicefrac{1}{2}$}{alpha <= - 1/2} and \texorpdfstring{$S(2\alpha;\ell)<\infty$}{S(2alpha;l)<infinity}: the strong crystalline phase}\label{sec:zeros_alpha<=-1/2}
Throughout this section we assume that
\begin{equation}\label{eq:convergence_b^2}
S(2\alpha)<+\infty~\Longleftrightarrow~\sum_{k\geq 0}b^2{(k)}<\infty,
\end{equation}
where we recall our convention that $S(2\alpha)=S(2\alpha;\ell)$. This is always true if $\alpha<-\nicefrac{1}{2}$ and can be true or false if $\alpha=-\nicefrac{1}{2}$ depending on the slowly varying factor $\ell$. The behavior of $P_n$ (and their zeros) near the boundary of $\mathbb{D}_1$ is rather different whenever~\eqref{eq:convergence_b^2} holds true because $P_n$ converges to $P_{\infty}$ on the boundary, as was explained above.

It turns out that the zeros of $P_n$ are concentrated near a centered circle of radius $r_n$ which we are now going to define.
Fix $\psi\in [0,2\pi)$. For  $u\in\mathbb{C}$ and all sufficiently large $n\in\mathbb{N}$ put
\begin{align}\label{eq:z_n_alpha<=-1/2}
z_n(\psi,u)=\exp\left\{\frac{a_n}{2n}+\frac{u}{n}+{\rm i}\psi\right\}=z_n(0,0)\ee^{u/n}\ee^{\ii \psi},
\end{align}
where $a_n=-W_{-1}(-nb^2(n))$ and $W_{-1}$ is the secondary branch of the Lambert $W$-function. Then, $r_n = z_n(0,0)$ and the map $u\mapsto z_n(\psi, u)$ defines ``local coordinates'' in a scaling window centered at $\ee^{\ii \psi} r_n$.

According to Lemma~\ref{lem:W_-1} in the Appendix $a_n$ is well-defined and positive for all sufficiently large $n\in\mathbb{N}$ and satisfies
\begin{equation}\label{eq:a_n_choice}
a_n\exp(-a_n)=n b^2(n).
\end{equation}
Moreover, $z_n(0,0)\geq 1$ and
\begin{equation}\label{eq:z_n_main_relation}
z_n^{2n}(0,0)=\exp(a_n)=\frac{a_n}{nb^2(n)}.
\end{equation}
Lemma~\ref{lem:W_-1} also contains some further properties of the sequence $(a_n)$ that will be used later on. In particular, $a_n\to+\infty$, which implies $n(r_n-1) \to + \infty$.

\begin{theorem}\label{thm:boundary:alpha<-1/2}
Assume~\eqref{eq:moment_assumptions_on_xi1},~\eqref{eq:moment_assumptions_on_xi2}, $\alpha\leq -\nicefrac{1}{2}$ and, if $\alpha=-\nicefrac{1}{2}$ also~\eqref{eq:convergence_b^2}. Then, for any $0\leq \psi_1<\cdots<\psi_m<2\pi$, the sequence $(P_n(z_n(\psi_1,u)),\ldots,P_n(z_n(\psi_m,u)))_{u\in\mathbb{C}}$, $n\in\mathbb{N}$, converges in distribution on $\mathcal{A}^m(\mathbb{C})$ to
$$
(P_{\infty}(\ee^{\ii\psi_1})+\ee^{u}N_1,\ldots,P_{\infty}(\ee^{\ii\psi_m})+\ee^{u}N_m)_{u\in\mathbb{C}},
$$
where $(N_1,\ldots,N_m)$ is a  complex Gaussian random vector with zero mean and
\begin{equation}\label{eq:limit_gaussian_cov_1}
\mathbb{E}[N_i\overline{N_j}]=\sigma^2\1_{\{i=j\}},\quad 1\leq i,j\leq m,
\end{equation}
and
\begin{equation}\label{eq:limit_gaussian_cov_2}
\mathbb{E}[N_iN_j]=(\sigma_1^2-\sigma_2^2)\1_{\{\phi_i+\phi_j=2\pi\}},\quad 1\leq i,j\leq m.
\end{equation}
Moreover, the Gaussian vector $(N_1,\ldots,N_m)$ is independent of $P_{\infty}$.

If $\alpha<-\nicefrac{1}{2}$, then the sequence $a_n=-W_{-1}(-nb^2(n))$ in the definition of $z_n(\psi,u)$ can be replaced by
\begin{align}\label{eq:z_n_alpha<-1/2}
-2\log b(n)-\log n +\log \log n+\log (-2\alpha-1).
\end{align}
\end{theorem}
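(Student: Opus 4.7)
The plan is to adapt the variance-plus-Lindeberg-Feller-plus-tightness template of the proof of Theorem~\ref{thm:boundary:alpha>-1/2}, with the twist that the limit now has two independent contributions: a \emph{bulk} part producing $P_\infty(\ee^{\ii\psi_j})$ and a \emph{tip} part producing $\ee^u N_j$. I would start from the algebraic decomposition
\[
P_n(z_n(\psi,u)) = P_n(\ee^{\ii\psi}) + X_n(\psi,u), \quad X_n(\psi,u) := \sum_{k=0}^n b(k)\xi_k \ee^{\ii k\psi}\bigl(\ee^{k\beta_n(u)} - 1\bigr),
\]
with $\beta_n(u) := a_n/(2n) + u/n$. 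By assumption~\eqref{eq:convergence_b^2}, the sums $P_n(\ee^{\ii\psi_j})$ converge a.s.\ (and in $L^2$) to $P_\infty(\ee^{\ii\psi_j})$, jointly in $j$, so the heart of the proof is joint convergence $(X_n(\psi_j,\cdot))_{j=1}^m \Rightarrow (\ee^u N_j)_{j=1}^m$ on $\mathcal{A}^m(\mathbb{C})$, together with asymptotic independence of $(N_j)_{j=1}^m$ from $(P_\infty(\ee^{\ii\psi_j}))_{j=1}^m$.

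The $X_n$'s are sums of independent, mean-zero random variables, so I would apply the Lindeberg-Feller CLT exactly as in Section~\ref{sec:zeros_alpha>-1/2}. The key analytic input is the pair of asymptotics, valid for every $w\in\mathbb{C}$ and $\delta\in\mathbb{R}$:
\begin{align*}
\sum_{k=0}^n b^2(k)\,\ee^{k(a_n+w)/n}\,\ee^{\ii k\delta} &\;\longrightarrow\; C(\delta) + \ee^{w}\,\1_{\delta\in 2\pi\mathbb{Z}},\\
\sum_{k=0}^n b^2(k)\,\ee^{k\beta_n(w)}\,\ee^{\ii k\delta} &\;\longrightarrow\; C(\delta),
\end{align*}
where $C(\delta):=\sum_{k\geq 0} b^2(k)\ee^{\ii k\delta}$ (note $C(0)=S(2\alpha)$). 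Both are derived by a bulk-plus-tip split: the bulk $k=O(1)$ always contributes $C(\delta)$; the tip $k\approx n$ is evaluated by the substitution $j=n-k$, regular variation of $b^2$, the geometric-sum estimate (which is $\sim n/a_n$ if $\delta\in 2\pi\mathbb{Z}$ and $O(1)$ otherwise), and finally the defining relation $n b^2(n)\ee^{a_n}=a_n$ from~\eqref{eq:a_n_choice}. In the first display the tip contributes $\ee^w$ when $\delta\in 2\pi\mathbb{Z}$; in the second display the tip contributes $2\ee^{w/2}\,b(n)\sqrt{n/a_n}$, which vanishes because $b(n)\sqrt{n/a_n}\to 0$ under~\eqref{eq:convergence_b^2}. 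Expanding $(\ee^{k\beta_n(u)}-1)(\ee^{k\beta_n(\bar u')}-1)$ and its pseudo-variant into four exponentials and plugging in the two displays, the four $C(\delta)$-bulk contributions cancel via the pattern $+1-1-1+1=0$ and leave exactly the covariances~\eqref{eq:limit_gaussian_cov_1}--\eqref{eq:limit_gaussian_cov_2}, with the required factors $\ee^{u+\bar u'}$ (conjugate) and $\ee^{u+u'}\1_{\psi_i+\psi_j\in 2\pi\mathbb{Z}}$ (pseudo). The Lindeberg condition and tightness on $\mathcal{A}^m(\mathbb{C})$ are verified by the same moment bounds as in the liquid phase, using $\mathbb{E}|X_n(\psi,u)|^2 = O(\ee^{2\Re u})$ uniformly on compacts in $u$.

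Asymptotic independence of $(N_j)$ from $(P_\infty(\ee^{\ii\psi_j}))$ follows from a standard truncation: write $X_n = X_n^{\leq K}+X_n^{>K}$. For each fixed $K$, $X_n^{\leq K}(\psi,u)\to 0$ uniformly on compacts of $u$ as $n\to\infty$, since $\ee^{k\beta_n(u)}-1\to 0$ for each fixed $k$, while $X_n^{>K}$ depends only on $(\xi_k)_{k>K}$ and so is independent of $P_\infty^{\leq K}(\ee^{\ii\psi_j}):=\sum_{k=0}^K b(k)\xi_k\ee^{\ii k\psi_j}$. Sending $n\to\infty$ first and $K\to\infty$ afterwards (using $\|P_\infty - P_\infty^{\leq K}\|_{L^2}\to 0$ by~\eqref{eq:convergence_b^2}), a Slutsky-type argument yields the claimed joint limit with the required independence. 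Finally, the explicit formula~\eqref{eq:z_n_alpha<-1/2} for $\alpha<-\nicefrac{1}{2}$ is obtained by iterating the relation $a_n = \log a_n - \log(nb^2(n))$ once: to leading order $a_n \sim -(2\alpha+1)\log n$ (the $\log\ell$ term being $o(\log n)$ by slow variation), so $\log a_n = \log\log n + \log(-2\alpha-1) + o(1)$, and substituting back yields~\eqref{eq:z_n_alpha<-1/2}. The main obstacle I expect is the delicate ``bulk-plus-tip'' structure of $\mathrm{Var}(X_n)$: both parts contribute at order $\Theta(1)$, and their disentangling hinges both on the precise choice~\eqref{eq:a_n_choice} of $a_n$ via the secondary Lambert-$W$ branch and on Karamata-type uniform estimates controlling the slowly varying factors in $b^2$.
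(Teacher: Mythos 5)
Your route is genuinely different from the paper's and, as far as I can see, it works. The paper proves the theorem via a Cram\'er--Wold reduction followed by a \emph{three-way} split of the summation index $k$ into a low range $[0,\lfloor\log n\rfloor]$ (producing $P_\infty(\ee^{\ii\psi_j})$ almost surely), a middle range $(\lfloor\log n\rfloor,\,n - M\lfloor n a_n^{-1}\rfloor]$ (shown to vanish in the iterated limit $\lim_M\limsup_n$ via Potter bounds), and a tip range $(n-M\lfloor n a_n^{-1}\rfloor,\,n]$ (producing $\ee^u N_{j,M}$ via Lindeberg--Feller applied \emph{only to the tip}). Independence of $P_\infty$ and $(N_j)$ is then automatic, because the bulk and tip use disjoint blocks of $\xi_k$'s. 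You instead use the \emph{exact algebraic} decomposition $P_n(z_n(\psi,u)) = P_n(\ee^{\ii\psi}) + X_n(\psi,u)$, apply the CLT to the entire remainder $X_n$, and let the bulk contributions in $\Var X_n$ cancel through the $+1-1-1+1$ pattern of the four exponentials. What you gain is that no explicit middle region ever appears in the variance bookkeeping; what you pay for is that (a) the Lindeberg condition must now be checked across \emph{all} $k$, not just the tip, so you need to verify $\max_{0\le k\le n} b(k)\,|\ee^{k\beta_n(u)}-1|\to 0$ (this does hold, but requires splitting at $k\sim n/a_n$ and using the first-order Taylor bound $|\ee^z-1|\lesssim|z|$ for $|z|\lesssim 1$ on the low range and $b(k)\ee^{ka_n/(2n)}$ estimates on the high range --- it is not literally the ``same moment bound as in the liquid phase''); and (b) the independence of $P_\infty$ and $(N_j)$, which the paper gets for free, now needs a genuine $3\varepsilon$ truncation argument: for each $K$ one must control $\sup_n\E|P_n^{>K}|^2$ and $\E|X_n^{\leq K}|^2$ separately and apply the independence only to the pair $(P_n^{\leq K}, X_n^{>K})$, since $P_n^{>K}$ and $X_n^{>K}$ are \emph{not} independent for finite $n$. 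Also worth noting: the proof of your first key display, $\sum_{k=0}^n b^2(k)\ee^{k(a_n+w)/n}\ee^{\ii k\delta}\to C(\delta)+\ee^w\1_{\delta\in 2\pi\Z}$, still requires controlling the middle $k$-range, so the paper's $Y^{(2)}_n$ estimate (Potter plus the maximization of $(1-x)^{2\alpha-\delta}\ee^{-xa_n/2}$) has not really been avoided --- it has merely migrated into the proof of a cleaner-looking statement. Finally, the small typo $\ee^{w/2}$ in your tip estimate for the second display should be $\ee^{w}$; it does not affect the conclusion since the factor $b(n)\sqrt{n/a_n}=\ee^{-a_n/2}\to 0$ kills the term regardless.
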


Theorem~\ref{thm:boundary:alpha<-1/2} implies the following
\begin{corollary}[Local convergence of zeros near the unit circle, strong crystalline phase $\alpha\leq -\nicefrac{1}{2}$ and $\sum b^2(k)<\infty$]\label{cor:boundary:alpha<-1/2}
Under the assumptions of Theorem~\ref{thm:boundary:alpha<-1/2}, the sequence
$$
(\zeros_{\mathbb{C}}(P_n(z_n(\psi_1,\cdot))),\ldots,\zeros_{\mathbb{C}}(P_n(z_n(\psi_m,\cdot))),\quad n\in\mathbb{N},
$$
regarded as a sequence of random elements of $(M_p(\mathbb{C}))^m$, converges in distribution to
\begin{multline*}
\left(\zeros_{\mathbb{C}}(P_{\infty}(\ee^{\ii \psi_1})+\ee^{(\cdot)}N_1),\ldots,\zeros_{\mathbb{C}}(P_{\infty}(\ee^{\ii \psi_m})+\ee^{(\cdot)}N_m)\right)=\\
\left(\log(-N_1^{-1}P_{\infty}(\ee^{\ii \psi_1}))+2\pi {\rm i}\mathbb{Z},\ldots,\log(-N_m^{-1}P_{\infty}(\ee^{\ii \psi_m}))+2\pi {\rm i}\mathbb{Z}\right),
\end{multline*}
where $\log$ denotes any branch (say, principal) of the complex logarithm.
\end{corollary}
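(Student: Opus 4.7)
The plan is to deduce Corollary~\ref{cor:boundary:alpha<-1/2} from Theorem~\ref{thm:boundary:alpha<-1/2} by the continuous mapping theorem applied to the zero-counting functional. Theorem~\ref{thm:boundary:alpha<-1/2} already establishes the joint convergence in distribution on the product space $\mathcal{A}^m(\mathbb{C})$ of the vector $(P_n(z_n(\psi_1,\cdot)),\ldots,P_n(z_n(\psi_m,\cdot)))$ to $(F_1,\ldots,F_m)$, where $F_j(u):=P_{\infty}(\ee^{\ii\psi_j})+\ee^{u}N_j$. As recalled before Corollary~\ref{cor:2.1}, Hurwitz's theorem yields that $\zeros_{\mathbb{C}}:\mathcal{A}(\mathbb{C})\setminus\{0\}\to M_p(\mathbb{C})$ is continuous. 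Applying this coordinate-wise, the product map
$(f_1,\ldots,f_m)\mapsto (\zeros_{\mathbb{C}} f_1,\ldots,\zeros_{\mathbb{C}} f_m)$
is continuous on the open subset $(\mathcal{A}(\mathbb{C})\setminus\{0\})^m$ of $\mathcal{A}^m(\mathbb{C})$ with respect to the product vague topology on $M_p(\mathbb{C})^m$.

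Next I verify that the limit $(F_1,\ldots,F_m)$ lies in $(\mathcal{A}(\mathbb{C})\setminus\{0\})^m$ almost surely, which is the hypothesis under which the continuous mapping theorem may be applied. By \eqref{eq:limit_gaussian_cov_1} each $N_j$ is a centered complex Gaussian with $\mathbb{E}[|N_j|^2]=\sigma^2>0$, so $N_j\neq 0$ almost surely. On the event $\{N_j\neq 0\}$ the entire function $F_j(u)-F_j(0)=N_j(\ee^u-1)$ is non-trivial, hence $F_j\not\equiv 0$. Combining the events $\{N_j\neq 0\}$ for $j=1,\ldots,m$ (which each have full measure) the continuous mapping theorem delivers
\begin{equation*}
(\zeros_{\mathbb{C}}(P_n(z_n(\psi_1,\cdot))),\ldots,\zeros_{\mathbb{C}}(P_n(z_n(\psi_m,\cdot))))~\Longrightarrow~(\zeros_{\mathbb{C}}(F_1),\ldots,\zeros_{\mathbb{C}}(F_m))
\end{equation*}
in $M_p(\mathbb{C})^m$, as required.

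Finally, I identify $\zeros_{\mathbb{C}}(F_j)$ explicitly. For any $c_1\in\mathbb{C}$ and $c_2\in\mathbb{C}\setminus\{0\}$, the entire function $u\mapsto c_1+c_2\ee^{u}$ vanishes at $u$ iff $\ee^{u}=-c_1/c_2$; when $c_1\neq 0$ the solution set is $\log(-c_1/c_2)+2\pi\ii\mathbb{Z}$ (independently of the chosen branch of $\log$), and when $c_1=0$ the function has no zeros at all. Specialising to $c_1=P_{\infty}(\ee^{\ii\psi_j})$ and $c_2=N_j$ gives
\begin{equation*}
\zeros_{\mathbb{C}}(F_j)=\log\bigl(-N_j^{-1}P_{\infty}(\ee^{\ii\psi_j})\bigr)+2\pi\ii\mathbb{Z}
\end{equation*}
on the event $\{P_{\infty}(\ee^{\ii\psi_j})\neq 0\}$, which is the announced formula; on the complementary event (negligible under standard non-degeneracy assumptions on $\xi$) the zero set is empty, consistent with the conventional interpretation of $\log 0+2\pi\ii\mathbb{Z}$. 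Since all the analytic content is packaged into Theorem~\ref{thm:boundary:alpha<-1/2}, there is no genuine obstacle in the present argument; the only substantive point is the componentwise non-triviality of $F_j$, which is handled immediately by $\sigma^2>0$.
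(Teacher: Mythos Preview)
Your proof is correct and follows exactly the approach implicit in the paper, which states the corollary as an immediate consequence of Theorem~\ref{thm:boundary:alpha<-1/2} via the Hurwitz/continuous-mapping argument set up before Corollary~\ref{cor:2.1}. The only addition you supply is the (straightforward) verification that the limit functions are a.s.\ non-trivial and the explicit identification of their zero sets, both of which the paper leaves to the reader.
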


\begin{proof}[Proof of Theorem~\ref{thm:boundary:alpha<-1/2}]
By the Cram\'{e}r-Wold device it suffices to prove that, for all $\alpha_1,\ldots,\alpha_m\in\mathbb{C}$,
\begin{equation}\label{eq:prop:boundary:alpha<-1/2_main_CW}
\left(\sum_{j=1}^{m}\alpha_j P_n(z_n(\psi_j,u))\right)_{u\in\mathbb{C}}~\Longrightarrow~\left(\sum_{j=1}^{m}\alpha_j (P_{\infty}(\ee^{\ii \psi_j})+\ee^{u}N_j)\right)_{u\in\mathbb{C}},\quad n\to\infty,
\end{equation}
on $\mathcal{A}(\mathbb{C})$. As in the proof of Theorem~\ref{thm:boundary:alpha>-1/2} we shall first prove the convergence of finite-dimensional distributions and then check the tightness.

Fix $M\in\mathbb{N}$ and decompose (for a sufficiently large $n$) the left-hand side of~\eqref{eq:prop:boundary:alpha<-1/2_main_CW} as follows
\begin{multline*}
\sum_{k=0}^{\lfloor \log n \rfloor}b(k)\xi_k \left(\sum_{j=1}^{m}\alpha_j z_n^k(\psi_j,u)\right)+\sum_{j=1}^{m}\alpha_j \sum_{k=\lfloor \log n \rfloor+1}^{n-M\lfloor n a_n^{-1}\rfloor}b(k)\xi_k z_n^k(\psi_j,u)\\
+\sum_{k=n-M\lfloor n a_n^{-1}\rfloor+1}^{n}b(k)\xi_k \sum_{j=1}^{m}\alpha_j z_n^k(\psi_j,u)=:Y^{(1)}_n(u)+\sum_{j=1}^{m}\alpha_j Y^{(2)}_{n,j}(u,M)+Y^{(3)}_n(u,M).
\end{multline*}

Part (ii) of Lemma~\ref{lem:W_-1} implies that for every compact set $K\subset\mathbb{C}$ and $j=1,\ldots,m$,
$$
\sup_{u\in K}\max_{0\leq k\leq \lfloor \log n\rfloor}|z_n^k(\psi_j,u)-\ee^{\ii \psi_j k}|=
\sup_{u\in K}\max_{0\leq k\leq \lfloor \log n\rfloor}|z_n^k(0,u)-1|=O\left(\frac{\log^2 n}{n}\right),\quad n\to\infty.
$$
Therefore,
$$
Y^{(1)}_n(u)=\sum_{j=1}^{m}\alpha_j \left(\sum_{k=0}^{\lfloor \log n \rfloor}b(k)\xi_k \ee^{\ii \psi_j k}\right)+R_n(u),
$$
where, for some $C>0$,
$$
\mathbb{E}[\sup_{u\in K} |R_n|]\leq C\frac{\log^2 n}{n}\sum_{k=0}^{\lfloor \log n \rfloor}b(k)~\to~0,\quad n\to\infty.
$$
Summarizing, we have shown that
\begin{equation}\label{eq:Y_n_1_func}
(Y_n^{(1)}(u))_{u\in\mathbb{C}}~\Longrightarrow~\left(\sum_{j=1}^{m}\alpha_j P_{\infty}(\ee^{\ii \psi_j})\right)_{u\in\mathbb{C}},\quad n\to\infty,
\end{equation}
on $\mathcal{A}(\mathbb{C})$.

Our next goal is to show that, for every $\varepsilon>0$ and $u\in\mathbb{C}$,
\begin{equation}\label{eq:Y2_to_zero}
\lim_{M\to\infty}\limsup_{n\to\infty}\mathbb{P}\{|Y_{n,j}^{(2)}(u,M)|>\varepsilon\}=0,\quad j=1,\ldots,m.
\end{equation}
By Markov's inequality it suffices to check that
$$
\lim_{M\to\infty}\limsup_{n\to\infty}\mathbb{E}[|Y_{n,j}(u,M)|^2]=0,\quad j=1,\ldots,m.
$$
Observe that
$$
\mathbb{E}[|Y_{n,j}^{(2)}(u,M)|^2]=\sigma^2\sum_{k=\lfloor \log n\rfloor+1}^{n-M\lfloor n a_n^{-1}\rfloor}b^2(k)z_n^{2k}(0,0)\ee^{2\Re(u)k/n}.
$$
Hence,~\eqref{eq:Y2_to_zero} follows if we can check that
\begin{equation}\label{eq:Y2_to_zero_1}
\lim_{M\to\infty}\limsup_{n\to\infty}\sum_{k=\lfloor \log n\rfloor+1}^{n-M\lfloor n a_n^{-1}\rfloor}b^2(k)z_n^{2k}(0,0)=0.
\end{equation}
Let $A_1>0$ be a fixed positive constant to be specified later. Note that for $k\leq A_1 n a_n^{-1}$, the quantity $z_n^{2k}(0,0)$ is bounded. Since also
$$
\sum_{k\geq 0}b^2(k)<\infty,
$$
the limit relation~\eqref{eq:Y2_to_zero_1} is equivalent to
\begin{equation}\label{eq:Y2_to_zero_2}
\lim_{M\to\infty}\limsup_{n\to\infty}\sum_{k=\lfloor A_1 n a_n^{-1}\rfloor+1}^{n-M\lfloor n a_n^{-1}\rfloor}b^2(k)z_n^{2k}(0,0)=0.
\end{equation}
Using~\eqref{eq:z_n_main_relation} we can write
$$
\sum_{k=\lfloor A_1 n a_n^{-1}\rfloor+1}^{n-M\lfloor n a_n^{-1}\rfloor}b^2(k)z_n^{2k}(0,0)=\frac{a_n}{n}
\sum_{k=\lfloor A_1 n a_n^{-1}\rfloor+1}^{n-M\lfloor n a_n^{-1}\rfloor}\left(\frac{b^2(k)}{b^2(n)}\right)z_n^{2k-2n}(0,0).
$$
By the Potter's bound for regularly varying functions, see~\cite[Theorem 1.5.6]{BGT}, for every $\delta>0$, $A_2>1$ and all sufficiently large $n$,
$$
\frac{b^2(k)}{b^2(n)}\leq A_2\left(\frac{k}{n}\right)^{2\alpha-\delta},\quad \lfloor A_1 n a_n^{-1}\rfloor+1\leq k\leq n-M\lfloor n a_n^{-1}\rfloor.
$$
Therefore, we are left with showing that
\begin{equation}\label{eq:Y2_to_zero_3}
\lim_{M\to\infty}\limsup_{n\to\infty}\frac{a_n}{n}
\sum_{k=\lfloor A_1 n a_n^{-1}\rfloor+1}^{n-M\lfloor n a_n^{-1}\rfloor}\left(\frac{k}{n}\right)^{2\alpha-\delta}z_n^{2k-2n}(0,0)=0.
\end{equation}
Changing the index of summation we infer
\begin{equation}
\frac{a_n}{n}\sum_{k=\lfloor A_1 n a_n^{-1}\rfloor+1}^{n-M\lfloor n a_n^{-1}\rfloor}\left(\frac{k}{n}\right)^{2\alpha-\delta}z_n^{2k-2n}(0,0)
=\frac{a_n}{n}\sum_{k=M\lfloor n a_n^{-1}\rfloor}^{n-\lfloor A_1 n a_n^{-1}\rfloor-1}\left(1-\frac{k}{n}\right)^{2\alpha-\delta}\exp\left\{-\frac{a_n k}{n}\right\}\label{eq:Y2_to_zero_4}.
\end{equation}
Pick now $A_1>2|2\alpha-\delta|$ and write
$$
\max_{M\lfloor n a_n^{-1}\rfloor\leq k\leq n-\lfloor A_1 n a_n^{-1}\rfloor-1}\left(1-\frac{k}{n}\right)^{2\alpha-\delta}\exp\left\{-\frac{ka_n}{2n}\right\}\leq \max_{0\leq x\leq 1-A_1 a_n^{-1}}(1-x)^{2\alpha-\delta}\ee^{-x a_n/2}.
$$
Using the fact that the derivative of $x\mapsto (1-x)^{2\alpha-\delta}\ee^{-x a_n/2}$ is strictly negative on $[0,1-A_1 a_n^{-1}]$, the above maximum is attained at $x=0$ and is equal to $1$. Thereupon,~\eqref{eq:Y2_to_zero_4} yields
\begin{align*}
\frac{a_n}{n}\sum_{k=\lfloor A_1 n a_n^{-1}\rfloor+1}^{n-M\lfloor n a_n^{-1}\rfloor}\left(\frac{k}{n}\right)^{2\alpha-\delta}z_n^{2k-2n}(0,0)\leq \frac{a_n}{n}
\sum_{k=M\lfloor n a_n^{-1}\rfloor}^{\infty}\exp\left\{-\frac{k a_n}{2n}\right\}\leq \frac{a_n}{n}\frac{\ee^{-M/4}}{1-\ee^{-a_n/(2n)}}.
\end{align*}
The latter implies that $\limsup_{n\to\infty}$ in~\eqref{eq:Y2_to_zero_3} is less or equal $2\ee^{-M/4}$. Sending $M\to\infty$ finishes the proof of~\eqref{eq:Y2_to_zero_3} and hence of~\eqref{eq:Y2_to_zero}.

It remains to treat $Y_n^{(3)}(u,M)$. To this end we need a lemma.
\begin{lemma}\label{lem:Y_n_3_variance}
Assume that $\alpha\leq -\nicefrac{1}{2}$ and, if $\alpha=-\nicefrac{1}{2}$, that~\eqref{eq:convergence_b^2} holds. Then, for every $M>0$ and $w\in\mathbb{C}$,
\begin{equation}\label{lem:claim1}
\lim_{n\to\infty}\sum_{k=n-M\lfloor n a_n^{-1}\rfloor}^{n}b^2(k)z_n^{2k}(0,0)\ee^{w k/n}=\ee^{w}(1-\ee^{-M}).
\end{equation}
Moreover, for every fixed $\theta\notin 2\pi\mathbb{Z}$,
\begin{equation}\label{lem:claim2}
\lim_{n\to\infty}\sum_{k=n-M\lfloor n a_n^{-1}\rfloor}^{n}b^2(k)z_n^{2k}(0,0)\ee^{w k/n}\ee^{\ii k \theta}=0.
\end{equation}
\end{lemma}
\begin{proof}
Observe that
\begin{equation}\label{eq:lem_proof_uniformity}
\ee^{w k/n}b^2(k)~\sim \ee^{w}b^{2}(n),\quad n\to\infty,
\end{equation}
uniformly in $k\in\{n-M\lfloor n a_n^{-1}\rfloor,\ldots,n\}$, by the uniform convergence theorem for regularly varying functions, see~\cite[Theorem 1.5.2]{BGT}. Thus, using also~\eqref{eq:z_n_main_relation}, we infer
\begin{align}
\sum_{k=n-M\lfloor n a_n^{-1}\rfloor}^{n}b^2(k)z_n^{2k}(0,0)\ee^{w k/n}&\sim \ee^{w} b^{2}(n)\sum_{k=n-M\lfloor n a_n^{-1}\rfloor}^{n}z_n^{2k}(0,0)\notag\\
&=\ee^{w} b^{2}(n)\sum_{k=0}^{M\lfloor n a_n^{-1}\rfloor}z_n^{2(n-k)}(0,0)\notag\\
&=\ee^{w}\frac{a_n}{n}\sum_{k=0}^{M\lfloor n a_n^{-1}\rfloor}z_n^{-2k}(0,0)\notag\\
&=\ee^{w}\frac{a_n}{n}\frac{1-z_n^{-2(M\lfloor n a_n^{-1}\rfloor+1)}(0,0)}{1-z_n^{-2}(0,0)}.\label{eq:lem_proof4}
\end{align}
Since
$$
1-z_n^{-2}(0,0)~\sim~\frac{a_n}{n},\quad n\to\infty,
$$
and also
\begin{equation}\label{eq:lem_proof5}
\lim_{n\to\infty}z_n^{-2(M\lfloor n a_n^{-1}\rfloor+1)}(0,0)=\ee^{-M},
\end{equation}
we conclude that~\eqref{lem:claim1} holds true. For the proof of~\eqref{lem:claim2} note that
\begin{align*}
&\hspace{-2cm}\sum_{k=n-M\lfloor n a_n^{-1}\rfloor}^{n}b^2(k)z_n^{2k}(0,0)\ee^{w k/n}\ee^{\ii k \theta}\\
&=\frac{a_n}{n} \sum_{k=n-M\lfloor n a_n^{-1}\rfloor}^{n}\left(\frac{b^2(k)}{b^2(n)}\ee^{w k/n}-\ee^{w}\right)z_n^{-2(n-k)}(0,0)\ee^{\ii k \theta}\\
&+\ee^{w}\frac{a_n}n\sum_{k=n-M\lfloor n a_n^{-1}\rfloor}^{n}z_n^{-2(n-k)}(0,0)\ee^{\ii k \theta}=:R_1(n)+R_2(n).
\end{align*}
From~\eqref{eq:lem_proof_uniformity} we infer that, for arbitrarily small $\varepsilon>0$ and all sufficiently large $n$,
$$
|R_1(n)|\leq \frac{\varepsilon a_n}{n} \sum_{k=n-M\lfloor n a_n^{-1}\rfloor}^{n}z_n^{-2(n-k)}(0,0)
=\frac{\varepsilon a_n}{n} \sum_{k=0}^{M\lfloor n a_n^{-1}\rfloor}z_n^{-2k}(0,0).
$$
Using~\eqref{eq:lem_proof4} and~\eqref{eq:lem_proof5}, we infer
$$
\limsup_{n\to\infty}|R_1(n)|\leq \varepsilon (1-\ee^{-M}).
$$
Since $\varepsilon>0$ is arbitrary, the above limit superior is equal to $0$. The sum in $R_2(n)$ is a geometric progression and can be evaluated directly:
$$
R_2(n)=\ee^{w}\ee^{\ii n \theta}\frac{a_n}{n}\sum_{k=0}^{M\lfloor n a_n^{-1}\rfloor}z_n^{-2k}(0,0)\ee^{-\ii k \theta}=\ee^{w}\ee^{\ii n \theta}\frac{a_n}n\frac{1-\ee^{-\ii\theta(M\lfloor n a_n^{-1}\rfloor+1)}z_n^{-2(M\lfloor n a_n^{-1}\rfloor+1)}(0,0)}{1-\ee^{-\ii \theta}z_n^{-2}(0,0)}.
$$
The last ratio is bounded because the denominator converges to $1-\ee^{-\ii \theta}\neq 0$ whereas the numerator is bounded by~\eqref{eq:lem_proof5}. Hence $|R_2(n)|=O(n^{-1}a_n)$, as $n\to\infty$, and~\eqref{lem:claim2} follows, see part (ii) of Lemma~\ref{lem:W_-1}.
\end{proof}

We now return to the proof of Theorem~\ref{thm:boundary:alpha<-1/2} and proceed with the analysis of $Y_n^{(3)}(u,M)$. Observe that $Y_n^{(3)}(u,M)$ is the sum of independent random variables with finite second moments. Further, for every $u_1,u_2\in\mathbb{C}$,
\begin{multline*}
\mathbb{E}[Y_n^{(3)}(u_1,M)Y_n^{(3)}(u_2,M)]\\
=(\sigma_1^2-\sigma_2^2)\sum_{j=1}^{m}\sum_{i=1}^{m}\alpha_j\alpha_i\left(\sum_{k=n-M\lfloor n/\log n\rfloor}^{n}b^2(k)z_n^{2k}(0,0)\ee^{k(u_1+u_2)/n}\ee^{\ii (\psi_j+\psi_i)k}\right)
\end{multline*}
and
\begin{multline*}
\mathbb{E}[Y_n^{(3)}(u_1,M)\overline{Y_n^{(3)}(u_2,M)}]=\sigma^2\sum_{j=1}^{m}\sum_{i=1}^{m}\alpha_j\overline{\alpha_i}\left(\sum_{k=n-M\lfloor n/\log n\rfloor}^{n}b^2(k)z_n^{2k}(0,0)\ee^{k(u_1+\overline{u_2})/n}\ee^{\ii (\psi_j-\psi_i)k}\right).
\end{multline*}
Lemma~\ref{lem:Y_n_3_variance} implies
$$
\lim_{n\to\infty}\mathbb{E}[Y_n^{(3)}(u_1,M)Y_n^{(3)}(u_2,M)]=(\sigma_1^2-\sigma_2^2)\sum_{j=1}^{m}\sum_{i=1}^{m}\alpha_j\alpha_i\ee^{u_1+u_2}\left(1-\ee^{-M}\right)\1_{\{\phi_i+\phi_j=2\pi\}}.
$$
and
$$
\lim_{n\to\infty}\mathbb{E}[Y_n^{(3)}(u_1,M)\overline{Y_n^{(3)}(u_2,M)}]=\sigma^2\left(\sum_{j=1}^{m}|\alpha_j|^2\right)\ee^{u_1+\overline{u_2}}(1-\ee^{-M}).
$$
The expressions on the right-hand sides are the covariances of the process
$$
u\mapsto \ee^{u}(\alpha_1 N_{1,M}+\cdots+\alpha_m N_{m,M}),
$$
where $N_{1,M},\ldots,N_{m,M}$ are independent identically distributed complex Gaussian random variables with mean zero and the covariance structure
\begin{equation}\label{eq:limit_gaussian_cov_1_trunc}
\mathbb{E}[N_{i,M}\overline{N_{j,M}}]=\sigma^2(1-\ee^{-M})\1_{\{i=j\}},\quad 1\leq i,j\leq m,
\end{equation}
and
\begin{equation}\label{eq:limit_gaussian_cov_2_trunc}
\mathbb{E}[N_{i,M}N_{j,M}]=(\sigma_1^2-\sigma_2^2)(1-\ee^{-M})\1_{\{\phi_i+\phi_j=2\pi\}},\quad 1\leq i,j\leq m.
\end{equation}

Thus, to finish the proof of the convergence of finite dimensional distributions
\begin{equation}\label{eq:Y_n_3_fdd_conv}
(Y_n^{(3)}(u,M))_{u\in\mathbb{C}}~\tofdd~(\ee^{u}(\alpha_1 N_{1,M}+\cdots+\alpha_m N_{m,M}))_{u\in\mathbb{C}},
\end{equation}
it remains to verify the Lindeberg-Feller condition: for every $\delta>0$,
\begin{equation}\label{eq:lind-feller-alpha<-1/2}
\lim_{n\to\infty}\sum_{k=n-M\lfloor n a_n^{-1}\rfloor}^{n}\mathbb{E}[|W_{n,k}|^2\1_{\{|W_{n,k}|>\delta\}}=0,
\end{equation}
where $W_{n,k}:=b(k)\xi_k\left(\sum_{j=1}^{m}\alpha_j z_n^k(\psi_j,u)\right)$. Without loss of generality assume that there is at least one non-zero $\alpha_j$ and put $\alpha^{\ast}:=\max_{1\leq j\leq m}|\alpha_j|>0$. Writing
$$
\sum_{k=n-M\lfloor n a_n^{-1}\rfloor}^{n}\mathbb{E}[|W_{n,k}|^2\1_{\{|W_{n,k}|>\delta\}}=\sigma^2\sum_{k=n-M\lfloor n a_n^{-1}\rfloor}^{n}b^2(k)\left|\sum_{j=1}^{m}\alpha_j z_n^k(\psi_j,u)\right|^2\mathbb{E}[|\xi|^2\1_{\{|W_{n,k}|>\delta\}}],
$$
ans using $|W_{n,k}|\leq b(k)|\xi_k|\cdot|\alpha^{\ast}|\cdot|z_n^k(\psi^{\ast},u)|$, where $\psi^{\ast}:=\psi_{\arg\max\{|\alpha_j|\}}$, we see that it suffices to check that, for every fixed $0\leq\psi<2\pi$ and $\delta>0$,
$$
\lim_{n\to\infty}\sum_{k=n-M\lfloor n a_n^{-1}\rfloor}^{n}b^2(k)|z_n^k(\psi,u)|^2\mathbb{E}[|\xi|^2\1_{\{b(k)|\xi||z_n^k(\psi,u)|>\delta\}}]=0,
$$
or, equivalently,
\begin{equation}\label{eq:lind-feller-alpha<-1/2_2}
\lim_{n\to\infty}\sum_{k=n-M\lfloor n a_n^{-1}\rfloor}^{n}b^2(k)z_n^{2k}(0,0)\ee^{k(u+\overline{u})/n}\ee^{\ii \psi k}\mathbb{E}[|\xi|^2\1_{\{b(k)|\xi||z_n^k(\psi,u)|>\delta\}}]=0.
\end{equation}
Taking into account Lemma~\ref{lem:Y_n_3_variance} we only need to verify that
\begin{equation}\label{eq:Lind_feller_sec_moment}
\lim_{n\to\infty}\sup_{n-M\lfloor n a_n^{-1}\rfloor\leq k\leq n}\mathbb{E}[|\xi|^2\1_{\{b(k)|\xi||z_n^k(\psi,u)|>\delta\}}]=0.
\end{equation}
Since $\mathbb{E}[|\xi|^2]<\infty$ this will follow from
\begin{equation}\label{eq:lind-feller-alpha<-1/2_3}
\lim_{n\to\infty}\sup_{n-M\lfloor n a_n^{-1}\rfloor\leq k\leq n}b(k)|z_n^k(\psi,u)|=0.
\end{equation}
By the uniform convergence theorem for regularly varying functions and the representation $z_n(\psi,u)=z_n(0,0)\ee^{u/n}\ee^{\ii\psi}$,~\eqref{eq:lind-feller-alpha<-1/2_3} is a consequence of
$$
\lim_{n\to\infty}b(n)\sup_{n-M\lfloor n a_n^{-1}\rfloor\leq k\leq n}z_n^k(0,0)=0.
$$
The latter follows from~\eqref{eq:z_n_main_relation}, the fact that $z_n(0,0)\geq 1$ and $a_n=o(n)$, see Lemma~\ref{lem:W_-1}(ii).

Combining,~\eqref{eq:Y_n_1_func},~\eqref{eq:Y2_to_zero} and~\eqref{eq:Y_n_3_fdd_conv}, the independence of $(Y_n^{(1)}(u))_{u\in\mathbb{C}}$ and $(Y_n^{(3)}(u,M))_{u\in\mathbb{C}}$ and the obvious fact that
$$
(\ee^{u}(\alpha_1 N_{1,M}+\cdots+\alpha_m N_{m,M}))_{u\in\mathbb{C}}~\tofdd(\ee^{u}(\alpha_1 N_{1}+\cdots+\alpha_m N_{m}))_{u\in\mathbb{C}},\quad M\to\infty,
$$
we conclude that~\eqref{eq:prop:boundary:alpha<-1/2_main_CW} holds true in the sense of finite-dimensional distributions by an appeal to~\cite[Theorem 3.2]{Billingsley}.

To finish the proof of the functional convergence~\eqref{eq:prop:boundary:alpha<-1/2_main_CW} it remains to verify the tightness. By~\cite[Lemma 4.2]{Kab+Klim:2014} it suffices to check that
$$
\sup_{n\in\mathbb{N}}\sup_{|u|\leq R}\mathbb{E}[|P_n(z_n(\psi,u))|^2]<\infty,
$$
for every fixed $R>0$. But this follows from the formula
$$
\sup_{|u|\leq R}\mathbb{E}[|P_n(z_n(\psi,u))|^2]=\sup_{|u|\leq R}\sum_{k=0}^{n}b^2(k)z_n^{2k}(0,0)\ee^{2\Re(u)k/n}\leq \ee^{2R}\sum_{k=0}^{n}b^2(k)z_n^{2k}(0,0)
$$
taking into account~\eqref{eq:Y2_to_zero_1},~\eqref{lem:claim1} and
$$
\sum_{k=0}^{\lfloor \log n\rfloor}b^2(k)z_n^{2k}(0,0)\leq C\sum_{k=0}^{\lfloor \log n\rfloor}b^2(k)\leq C\sum_{k=0}^{\infty}b^2(k)<\infty,
$$
for some $C>0$.

Finally, if $\alpha<-\nicefrac{1}{2}$, then $a_n$ can be replaced by $-2\log b(n)-\log n +\log \log n+\log (-2\alpha-1)$ according to part (iii) of Lemma~\ref{lem:W_-1} because in this case
$$
z_n(0,0)=\exp((-2\log b(n)-\log n +\log \log n+\log (-2\alpha-1))/2n)(1+o(n^{-1})).
$$
The proof of Theorem~\ref{thm:boundary:alpha<-1/2} is complete.
\end{proof}

\begin{rem}[Global convergence of zeros in  the strong crystalline phase]\label{rem:zeros_in_annulus_strong_cryst_phase}
As in Theorem~\ref{thm:boundary:alpha<-1/2}, assume~\eqref{eq:moment_assumptions_on_xi1},~\eqref{eq:moment_assumptions_on_xi2}, $\alpha\leq -\nicefrac{1}{2}$ and, if $\alpha=-\nicefrac{1}{2}$ also~\eqref{eq:convergence_b^2}. Corollary~\ref{cor:boundary:alpha<-1/2} suggests that,
for all real $s_1 < s_2$, the number of complex zeros of $P_n$ in the annulus $\mathbb{A}(r_n \ee^{s_1/n},r_n\ee^{s_2/n})=\{z\in \mathbb{C}: r_n\ee^{s_1/n} \leq |z| \leq r_n\ee^{s_2/n}\}$ satisfies
\begin{equation}\label{eq:annulus_strong_cryst}
\lim_{n\to\infty} \frac{1}{n} \zeros_{\mathbb{A}(r_n\ee^{s_1/n},r_n\ee^{s_2/n})} (P_n)   \to  \frac 1 {2\pi}\int_{0}^{2\pi} \left(\ee^{- |P_{\infty}(\ee^{\ii \psi})|^2\ee^{-2s_2}/\sigma^2} - \ee^{- |P_{\infty}(\ee^{\ii \psi})|^2\ee^{-2s_1}/\sigma^2}\right)  \,  {\rm d} \psi, \quad n\to\infty,
\end{equation}
in probability. Note that the limit on the right-hand side is \emph{random} since it contains $P_\infty$.
To give a non-rigorous justification of~\eqref{eq:annulus_strong_cryst},  fix some large $B>0$ and cover the annulus $\mathbb{A}(r_n \ee^{s_1/n},r_n\ee^{s_2/n})$ by $\lfloor 2\pi n / B \rfloor$ windows $W_{j;n} = \{r_n \ee^{u/n} \ee^{\ii B j/n}: \Re u \in [s_1,s_2], \Im u \in [0, B]\}$, where $j \in \{1, \ldots, \lfloor 2\pi n/B \rfloor\}$ and the boundary effects are ignored. For the rest of the argument, condition on a realization of $(P_\infty(\ee^{\ii \psi}))_{\psi\in [0,2\pi]}$.
Corollary~\ref{cor:boundary:alpha<-1/2} suggests that, if $n$ is large, for every window $W_{j;n}$ there are two possibilities: (a) if the vertical lattice $\log(-N_j^{-1}P_{\infty}(\ee^{\ii Bj/n}))+2\pi {\rm i}\mathbb Z$ intersects the rectangle $\{u\in \mathbb C: \Re u \in [s_1,s_2], \Im u \in [0, B]\}$, then the number of complex zeros of $P_n$ in the window $W_{j;n}$ is approximately $B/(2\pi)$ (if $B$ is large), or (b):  if the lattice does not intersect the rectangle, there are no zeros in $W_{j;n}$. Here, the random variables $N_j$ are isotropic complex normal with $\E [|N_j|^2] = \sigma^2$, and this continues to hold even conditionally on $(P_\infty(\ee^{\ii \psi}))_{\psi\in [0,2\pi]}$. (For the last point we note that the $N_j$'s are determined by the diffusive limit of the $\xi_j$'s with $j$ ``close'' to $n$ and  with the contribution of each individual variable being negligible, whereas $(P_\infty(\ee^{\ii \psi}))_{\psi\in [0,2\pi]}$ is mainly determined by the first few $\xi_j$'s.) The probability of (a) is given by
$$
\mathbb P \left[s_1 \leq  \log|N^{-1}_jP_{\infty}(\ee^{\ii Bj/n})| \leq s_2 \;|\; (P_\infty(\ee^{\ii \psi}))_{\psi\in [0,2\pi]}\right]
=
\ee^{- |P_{\infty}(\ee^{\ii Bj/n})|^2\ee^{-2s_2}/\sigma^2} - \ee^{- |P_{\infty}(\ee^{\ii Bj/n})|^2\ee^{-2s_1}/\sigma^2},
$$
since $|N_j|^2$ is exponential with expectation $\sigma^2$.  
Since the random variables $N_j$ corresponding to scaling windows that are far apart behave asymptotically independently, the law of large numbers, applied conditionally on a realization of $(P_\infty(\ee^{\ii \psi}))_{\psi\in [0,2\pi]}$,  suggests~\eqref{eq:annulus_strong_cryst}. For comparison, note that in the setting of Kac polynomials, which belong to the liquid phase, a similar argument suggests that the number of zeros in the annulus $\mathbb{A}(\ee^{s_1/n},\ee^{s_2/n})$, divided by $n$, converges in probability to a \emph{deterministic} limit which coincides with the limit of the expectation that has been determined in~\cite{Ibragimov+Zeitouni:1997}; see Example~\ref{ex:edelman_kostlan}.
\end{rem}

\subsection{Case \texorpdfstring{$\alpha=-\nicefrac{1}{2}$}{alpha=-1/2} and \texorpdfstring{$S(-1;\ell)=+\infty$}{S(-1;l)=infinity}: the weak crystalline phase}\label{sec:zeros_alpha=-1/2}

Throughout this section $\alpha=-\nicefrac{1}{2}$, that is,
$$
b(x)=x^{-1/2}\ell(x),\quad x>0,
$$
for a slowly varying $\ell$. Define the function
\begin{equation}\label{eq:L_definition}
L(x):=\int_0^{x}b^2(t){\rm d}t,\quad x\geq 0,
\end{equation}
and observe that $L$ is monotone increasing and $L(x)~\to+\infty$, as $x\to+\infty$, in view of $S(-1)=+\infty$. Moreover, it is known that $L$ is slowly varying at $+\infty$ and also
\begin{equation}\label{eq:L_fast_enough}
\lim_{n\to\infty}\frac{nb^2(n)}{L(n)}=\lim_{n\to\infty}\frac{\ell^{2}(n)}{L(n)}=0,
\end{equation}
see \cite[Eq.~(1.5.8)]{BGT}.

Next we define the radius $r_n$ of the circle near which the zeros of $P_n$ are concentrated, and ``local coordinates'' in a scaling window near $r_n\ee^{\ii \psi}$.
For a sufficiently large $n\in\mathbb{N}$, put
\begin{align}\label{eq:z_n_alpha=-1/2}
\hat{z}_n(\psi,u):=\exp\left\{-\frac{1}{2n}W_{-1}\left(-\frac{nb^2(n)}{L(n)}\right)+\frac{u}{n}+\ii\psi\right\}=\hat{z}_n(0,0)\ee^{u/n}\ee^{\ii\psi},
\end{align}
where, as before, $W_{-1}$ is the secondary branch of the Lambert $W$-function. Then, $r_n = \hat{z}_n(0,0)$. Denote also $\hat{a}_n:=-W_{-1}(-\ell^{2}(n)/L(n))$ and observe that
\begin{equation}\label{eq:a_n_hat_choice}
\frac{\ee^{\hat{a}_n}}{\hat{a}_n}=\frac{L(n)}{\ell^2(n)},
\end{equation}
for all sufficiently large $n$. Both $\hat{a}_n$ and $\hat{z}_n(\psi,u)$ are well-defined because $0<\ell^2(n)/L(n)<\ee^{-1}$ for all sufficiently large $n\in\mathbb{N}$ by~\eqref{eq:L_fast_enough}. For further use note that asymptotic expansion for $W_{-1}$ given by~\eqref{eq:W_-1_asymp} below implies that
\begin{equation}\label{eq:hat_a_n_asymp}
\hat{a}_n~\sim~\log \left(\frac{L(n)}{nb^2(n)}\right)~\sim~\log \left(\frac{L(n)}{\ell^2(n)}\right),\quad n\to\infty.
\end{equation}
In particular, the sequence $(\hat{a}_n)$ is slowly varying.

\begin{theorem}\label{thm:critical}
Assume~\eqref{eq:moment_assumptions_on_xi1} and~\eqref{eq:moment_assumptions_on_xi2}. Suppose also that $\alpha=-\nicefrac{1}{2}$ and $S(-1)=+\infty$. Then, for any $0\leq \psi_1<\cdots<\psi_m<2\pi$,
$$
\left(\frac{1}{\sqrt{L(n)}}P_n(\hat{z}_n(\psi_i,u))\right)_{u\in\mathbb{C},i=1,\ldots,m}~\Longrightarrow~(\hat{N}_i+\ee^{u}N_i)_{u\in\mathbb{C},i=1,\ldots,m},\quad n\to\infty.
$$
on the space $\mathcal{A}(\mathbb{C})^m$, where $(\hat{N}_1,\ldots,\hat{N}_m)$ and $(N_1,\ldots,N_m)$ are independent Gaussian vectors with the same distribution given by~\eqref{eq:limit_gaussian_cov_1} and~\eqref{eq:limit_gaussian_cov_2}.
\end{theorem}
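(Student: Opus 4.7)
The plan is to adapt the proof of Theorem~\ref{thm:boundary:alpha<-1/2}, with the key structural change that the series $P_\infty(\ee^{\ii\psi_j})$ (which diverges in the present regime, since $\sum_k b^2(k)=\infty$) is replaced by a Gaussian contribution $\hat N_j$ arising from a central limit theorem applied to the head of the polynomial after normalization by $\sqrt{L(n)}$. By the Cram\'er--Wold device it suffices to prove the scalar statement for an arbitrary linear combination $L(n)^{-1/2}\sum_{j=1}^{m}\beta_j P_n(\hat z_n(\psi_j,u))$, and the argument parallels the derivation of~\eqref{eq:prop:boundary:alpha<-1/2_main_CW}, so I focus on the novelties. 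I split this sum into three ranges: a \emph{head} $0\le k\le K_n$ with $K_n:=\lfloor n/\hat a_n^{2}\rfloor$, a \emph{middle} $K_n<k\le n-M\lfloor n/\hat a_n\rfloor$, and a \emph{tail} $k>n-M\lfloor n/\hat a_n\rfloor$, with $M>0$ sent to infinity after $n$.

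For the head I first note that $|\hat z_n^{k}(\psi_j,u)-\ee^{\ii\psi_j k}|=O(1/\hat a_n)$ uniformly for $u$ in a compact subset of $\mathbb{C}$ and $0\le k\le K_n$, since $k\hat a_n/n\le 1/\hat a_n\to 0$. The head is thus asymptotically equivalent to $L(n)^{-1/2}\sum_{k=0}^{K_n}b(k)\xi_k\sum_{j}\beta_j\ee^{\ii\psi_j k}$, and in particular its limit is independent of $u$. Karamata's theorem and~\eqref{eq:hat_a_n_asymp} give $L(K_n)\sim L(n)$, because $L(n)-L(K_n)\le (1+o(1))\ell^{2}(n)\log\hat a_n^{2}=o(L(n))$. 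The Lindeberg--Feller CLT for sums of independent complex random variables then produces a Gaussian vector whose covariance and pseudo-covariance match~\eqref{eq:limit_gaussian_cov_1}--\eqref{eq:limit_gaussian_cov_2}, that is, the distribution of $\sum_{j}\beta_j\hat N_j$; the off-diagonal terms $\sum_{k=1}^{K_n}b^{2}(k)\ee^{\ii\theta k}$ with $\theta\notin 2\pi\mathbb{Z}$ are $O(\ell^{2}(K_n))=o(L(K_n))$ by Abel summation against the bounded partial sums of $\sum_{k}\ee^{\ii\theta k}/k$.

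The tail contribution follows the treatment of $Y_n^{(3)}(u,M)$ in Theorem~\ref{thm:boundary:alpha<-1/2} once the analogue of Lemma~\ref{lem:Y_n_3_variance} is derived with the new objects: the identity $\hat z_n^{2n}(0,0)=\hat a_n L(n)/\ell^{2}(n)$, which follows from~\eqref{eq:a_n_hat_choice}, replaces $z_n^{2n}(0,0)=a_n/(nb^{2}(n))$, and slow variation of $\ell^{2}$ over the short range $n-k=O(n/\hat a_n)$ yields
\begin{equation*}
\frac{1}{L(n)}\sum_{k=n-M\lfloor n/\hat a_n\rfloor+1}^{n}b^{2}(k)\,\hat z_n^{2k}(0,0)\,\ee^{wk/n}\ee^{\ii\theta k}\;\longrightarrow\;\ee^{w}(1-\ee^{-M})\,\mathbbm{1}_{\theta\in 2\pi\mathbb{Z}},
\end{equation*}
so that the tail has limiting covariance structure that of $\sum_{j}\beta_j\ee^{u}N_{j,M}$, with $N_{j,M}$ inheriting the truncated analogue of~\eqref{eq:limit_gaussian_cov_1_trunc}--\eqref{eq:limit_gaussian_cov_2_trunc}. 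Lindeberg--Feller follows from $b(k)|\hat z_n^{k}(\psi,u)|/\sqrt{L(n)}\le\sqrt{\hat a_n/n}\,(1+o(1))\to 0$ uniformly on the tail range.

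For the middle, after the substitution $v=t\hat a_n/n$ in the integral approximation of the second moment and applying Potter's bound to control $\ell^{2}(nv/\hat a_n)/\ell^{2}(n)$ by a power of $v/\hat a_n$, one obtains an upper bound of the form $AL(n)\ee^{-M}+o(L(n))$; Markov's inequality then makes the middle negligible upon sending $n\to\infty$ followed by $M\to\infty$. Since head and tail involve disjoint sets of the $\xi_k$'s and are hence independent, the iterated limit combines via~\cite[Theorem 3.2]{Billingsley} to yield $\sum_{j}\beta_j(\hat N_j+\ee^{u}N_j)$. Tightness on $\mathcal{A}(\mathbb{C})$ is obtained as in Theorem~\ref{thm:boundary:alpha<-1/2} from $\sup_{|u|\le R}\mathbb{E}|P_n(\hat z_n(\psi,u))|^{2}/L(n)=O(1)$, a consequence of the head-plus-tail variance asymptotic $L(n)^{-1}\sum_{k}b^{2}(k)\ee^{k\hat a_n/n}\to 2$. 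The main obstacle I anticipate is precisely this middle-range estimate: the integrand $b^{2}(t)\ee^{t\hat a_n/n}$ is sharply peaked near $t=n$, and Potter's bound has to be used over a long range stretching from $n/\hat a_n^{2}$ to essentially $n$; the choice $K_n=n/\hat a_n^{2}$ has to simultaneously realise both $K_n\hat a_n/n\to 0$ and $\log(n/K_n)=o(L(n)/\ell^{2}(n))$, which is accommodated exactly by~\eqref{eq:hat_a_n_asymp}.
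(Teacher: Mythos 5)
Your proposal is essentially the paper's proof: same Cram\'er--Wold reduction, same three-way cut at $K_n=\lfloor n\hat a_n^{-2}\rfloor$ and $n-M\lfloor n/\hat a_n\rfloor$ (asymptotically equivalent to the paper's $\lfloor n\ee^{-M/\hat a_n}\rfloor$), same Lindeberg--Feller arguments for head and tail, same Potter-bound control of the middle, same iterated-limit conclusion via Billingsley's theorem, and the same identity $\hat z_n^{2n}(0,0)=\hat a_n L(n)/\ell^2(n)$ and tail variance lemma. The one place you depart is the proof that $L(K_n)\sim L(n)$. The paper's Lemma~\ref{lem:L_a_hat_asymp} goes through the de Haan class $\Pi$ plus a Bojanic--Seneta theorem; you gesture at a direct estimate attributed to Karamata. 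As stated, your bound $L(n)-L(K_n)\le(1+o(1))\,\ell^2(n)\log\hat a_n^2$ is not correct: since $K_n/n=\hat a_n^{-2}\to 0$, the uniform convergence theorem does not let you replace $\ell^2(t)$ by $\ell^2(n)$ uniformly on $[K_n,n]$, and Karamata's direct theorem does not apply over a range whose endpoint ratio degenerates. What does work is Potter's bound: $\ell^2(t)/\ell^2(n)\le C_\varepsilon(n/t)^\varepsilon$ for $K_n\le t\le n$, giving $L(n)-L(K_n)\le(C_\varepsilon/\varepsilon)\,\ell^2(n)\,\hat a_n^{2\varepsilon}$, which is $o(L(n))$ because $\hat a_n\sim\log\big(L(n)/\ell^2(n)\big)$ and $(\log x)^{2\varepsilon}=o(x)$. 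Once restated this way, your route is a valid and arguably more elementary alternative to Lemma~\ref{lem:L_a_hat_asymp}. One further overstatement: the Abel-summation argument for the off-diagonal head sums $\sum_{k\le K_n}b^2(k)\ee^{\ii\theta k}$ does not in general give $O(\ell^2(K_n))$; the paper's argument only yields $o(L(n))$ (the third Abel term is controlled by $\sum_{k}b^2(k)|1-\ell^2(k-1)/\ell^2(k)|=o(L(n))$ via slow variation, not by $\ell^2(K_n)$), but $o(L(K_n))$ is all that is needed.
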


\begin{corollary}[Local convergence of zeros near the unit circle, weak crystalline phase $\alpha\leq -\nicefrac{1}{2}$ and $\sum b^2(k)=+\infty$]
\label{cor:3.3}
Under the assumptions of Theorem~\ref{thm:critical}, the sequence
$$
(\zeros_{\mathbb{C}}(P_n(\hat{z}_n(\psi_1,\cdot))),\ldots,\zeros_{\mathbb{C}}(P_n(\hat{z}_n(\psi_m,\cdot))),\quad n\in\mathbb{N},
$$
regarded as a sequence of random elements of $(M_p(\mathbb{C}))^m$, converges in distribution to
\begin{multline}\label{eq:limit_zeros_critical}
\left(\zeros_{\mathbb{C}}(\hat{N}_1+\ee^{(\cdot)}N_1),\ldots,\zeros_{\mathbb{C}}(\hat{N}_m+\ee^{(\cdot)}N_m)\right)=\\
\left(\log(-N_1^{-1}\hat{N}_1)+2\pi {\rm i}\mathbb{Z},\ldots,\log(-N_m^{-1}\hat{N}_m)+2\pi {\rm i}\mathbb{Z}\right),
\end{multline}
where $\log$ denotes any branch (say, principal) of the complex logarithm.
\end{corollary}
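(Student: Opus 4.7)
The plan is to deduce Corollary~\ref{cor:3.3} directly from Theorem~\ref{thm:critical} via the continuous mapping theorem, using the continuity of the zero map $\zeros_{\mathbb{C}}$ on $\mathcal{A}(\mathbb{C})\setminus\{0\}$ (Hurwitz's theorem, already invoked in the paper for Corollary~\ref{cor:2.1}). Since multiplication by the nonzero scalar $1/\sqrt{L(n)}$ does not change the zero set, the content of the corollary is that the $m$-tuple of zero point processes converges weakly in $M_p(\mathbb{C})^{m}$ to the $m$-tuple of zero point processes of the limiting entire functions $u\mapsto \hat{N}_i+\ee^{u}N_i$.

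First I would verify that the limiting random entire functions $u\mapsto \hat{N}_i+\ee^{u}N_i$ are almost surely not identically zero. By~\eqref{eq:limit_gaussian_cov_1} each $N_i$ is a centered complex Gaussian with $\E[|N_i|^{2}]=\sigma^{2}>0$, hence $\mathbb{P}\{N_i=0\}=0$. On the full-measure event $\bigcap_{i=1}^{m}\{N_i\neq 0\}$, the exponential coefficient is nonzero, so the function $u\mapsto \hat{N}_i+\ee^{u}N_i$ is a genuinely non-trivial element of $\mathcal{A}(\mathbb{C})$. Therefore the limiting vector lies a.s.\ in $(\mathcal{A}(\mathbb{C})\setminus\{0\})^{m}$, which is precisely the set of continuity points of the product map $\zeros_{\mathbb{C}}^{\,\otimes m}\colon (\mathcal{A}(\mathbb{C})\setminus\{0\})^{m}\to M_p(\mathbb{C})^{m}$.

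Next I would apply the continuous mapping theorem to the weak convergence in $\mathcal{A}(\mathbb{C})^{m}$ supplied by Theorem~\ref{thm:critical}. Since $\zeros_{\mathbb{C}}^{\,\otimes m}$ is continuous on a set of full measure under the law of the limit, this yields
$$
\bigl(\zeros_{\mathbb{C}}(P_n(\hat{z}_n(\psi_1,\cdot))),\ldots,\zeros_{\mathbb{C}}(P_n(\hat{z}_n(\psi_m,\cdot)))\bigr)~\Longrightarrow~\bigl(\zeros_{\mathbb{C}}(\hat{N}_1+\ee^{(\cdot)}N_1),\ldots,\zeros_{\mathbb{C}}(\hat{N}_m+\ee^{(\cdot)}N_m)\bigr)
$$
on $M_p(\mathbb{C})^{m}$.

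It remains to identify each limiting point process. On the a.s.\ event $\{N_i\neq 0\}$, the equation $\hat{N}_i+\ee^{u}N_i=0$ is equivalent to $\ee^{u}=-\hat{N}_i/N_i$, which, since $\hat{N}_i\neq 0$ a.s.\ by the same variance argument, is solved by $u\in\log(-N_i^{-1}\hat{N}_i)+2\pi\ii\mathbb{Z}$ for any fixed branch of the logarithm. All such zeros are simple: differentiating gives $\tfrac{\d}{\d u}(\hat{N}_i+\ee^{u}N_i)=\ee^{u}N_i\neq 0$, so the zero measure has no multiplicities, and the identification in~\eqref{eq:limit_zeros_critical} follows. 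I do not expect any serious obstacle here; the corollary is essentially a clean post-processing of Theorem~\ref{thm:critical} via Hurwitz's theorem, the only mildly delicate point being the verification that the limiting function lies in $\mathcal{A}(\mathbb{C})\setminus\{0\}$ almost surely, which is handled by the nondegeneracy of the Gaussian marginals.
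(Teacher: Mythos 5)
Your proof is correct and follows exactly the route the paper intends: Theorem~\ref{thm:critical} gives convergence on $\mathcal{A}(\mathbb{C})^m$, and since each limit $u\mapsto\hat N_i+\ee^u N_i$ is almost surely nonzero (as $N_i\neq 0$ a.s.), the continuous mapping theorem via Hurwitz's theorem yields the claim, with the lattice identification following from $\ee^u=-\hat N_i/N_i$ and simplicity of zeros. The paper states the corollary without a separate proof precisely because this is the standard post-processing, as in its Corollaries~\ref{cor:2.1} and~\ref{cor:boundary:alpha<-1/2}.
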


For the proof of Theorem~\ref{thm:critical} we need a lemma.
\begin{lemma}\label{lem:L_a_hat_asymp}
For every $\gamma>0$ and $A>0$, $\lim_{n\to\infty}L(n A \hat{a}_n^{-\gamma})/L(n)=1$.
\end{lemma}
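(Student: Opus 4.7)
The plan is to reduce the ratio to a remainder integral and control that integral via Potter's bound for slowly varying functions. Since~\eqref{eq:hat_a_n_asymp} gives $\hat{a}_n\to+\infty$, for every fixed $\gamma>0$ and $A>0$ one has $A\hat{a}_n^{-\gamma}<1$, so $nA\hat{a}_n^{-\gamma}<n$, for all sufficiently large $n$. Writing $L(x)=\int_0^x\ell^2(t)/t\,{\rm d}t$, this yields
$$
\frac{L(nA\hat{a}_n^{-\gamma})}{L(n)}=1-\frac{1}{L(n)}\int_{nA\hat{a}_n^{-\gamma}}^{n}\frac{\ell^2(t)}{t}\,{\rm d}t,
$$
and the task reduces to showing that the integral on the right is $o(L(n))$.

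The core of the argument is Potter's bound for the slowly varying function $\ell^2$ (see \cite[Theorem~1.5.6]{BGT}): for any fixed $\delta\in(0,1)$ there exist $C>0$ and $N_0\in\mathbb{N}$ such that
$$
\ell^2(t)\le C\,\ell^2(n)\,(n/t)^{2\delta}\quad\text{for all }N_0\le t\le n.
$$
This inequality applies on the entire integration range once $n$ is so large that $nA\hat{a}_n^{-\gamma}\ge N_0$, which eventually holds because $A\hat{a}_n^{-\gamma}$ decays only like a negative power of the slowly varying sequence $\hat{a}_n$. Substituting and integrating,
$$
\int_{nA\hat{a}_n^{-\gamma}}^{n}\frac{\ell^2(t)}{t}\,{\rm d}t\le C\,\ell^2(n)\,n^{2\delta}\int_{nA\hat{a}_n^{-\gamma}}^{n}\frac{{\rm d}t}{t^{1+2\delta}}=\frac{C\,\ell^2(n)}{2\delta}\bigl(A^{-2\delta}\hat{a}_n^{2\gamma\delta}-1\bigr).
$$

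The final step is to combine this with~\eqref{eq:hat_a_n_asymp}, namely $\hat{a}_n\sim\log(L(n)/\ell^2(n))=\log r_n$, where $r_n:=L(n)/\ell^2(n)\to+\infty$ by~\eqref{eq:L_fast_enough}. Dividing the displayed bound by $L(n)$ one obtains
$$
\frac{1}{L(n)}\int_{nA\hat{a}_n^{-\gamma}}^{n}\frac{\ell^2(t)}{t}\,{\rm d}t\le \frac{C'\,(\log r_n)^{2\gamma\delta}}{r_n}\xrightarrow[n\to\infty]{}0,
$$
which completes the proof. The only delicate point is to fix the parameter $\delta\in(0,1)$ before sending $n\to\infty$ so that the Potter-type blow-up $\hat{a}_n^{2\gamma\delta}=(\log r_n)^{2\gamma\delta}$ is overwhelmed by the decay $\ell^2(n)/L(n)=1/r_n$; there is no real competition, since any power of $\log r_n$ cannot outgrow $r_n$. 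Any $\delta\in (0,1)$ works, and in fact the argument shows the stronger quantitative bound $1-L(nA\hat{a}_n^{-\gamma})/L(n)=O\bigl((\log r_n)^{2\gamma\delta}/r_n\bigr)$.
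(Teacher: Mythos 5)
Your proof is correct, and it takes a genuinely different — and arguably more elementary — route than the paper's. The paper works through the de Haan class $\Pi$: it notes that $L$ satisfies $\lim_{x\to\infty}(L(\lambda x)-L(x))/\ell^2(x)=\log\lambda$, defines $T(x):=1/\log(\ell^{-2}(x)L(x))$ and writes it as $c(x)T_1(x)$ via the Karamata representation theorem, then invokes the Bojanic--Seneta theorem, which reduces the claim to showing $\frac{L(\lambda x)-L(x)}{L(x)}\log T_1(x)\to 0$. You instead estimate the remainder integral $\int_{nA\hat a_n^{-\gamma}}^n \ell^2(t)/t\,{\rm d}t$ directly with Potter's bound, giving the explicit upper bound $\mathrm{const}\cdot\ell^2(n)\hat a_n^{2\gamma\delta}$; since $\hat a_n\sim\log(L(n)/\ell^2(n))$ and $\ell^2(n)/L(n)\to 0$, this is beaten by $L(n)$ because powers of the logarithm cannot outgrow the argument. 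Both proofs hinge on the same numerical fact (logs lose to linear growth), but yours avoids the $\Pi$-class machinery and the appeal to Bojanic--Seneta, making the argument self-contained and yielding an explicit quantitative rate. Your set-up correctly ensures $nA\hat a_n^{-\gamma}\to\infty$ (so the Potter bound applies on the whole integration range for large $n$) because $\hat a_n=o(\log n)$, and dropping the ``$-1$'' and replacing $\hat a_n$ by $\log r_n$ up to a constant are both legitimate since you only need an upper bound that tends to zero.
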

\begin{proof}
In view of the slow variation of $L$, it suffices to check the statement for $A=1$. Put $T(x):=1/\log(\ell^{-2}(x)L(x))$ for sufficiently large $x>0$. The function $x\mapsto T(x)$ is slowly varying, thus, by the representation theorem for slowly varying functions,
$$
T(x)=c(x)\exp\left(\int_{a}^{x}\frac{\varepsilon(s)}{s}{\rm d}s\right)=:c(x)T_1(x),
$$
where $\lim_{x\to+\infty}c(x)=c>0$ and $\lim_{x\to+\infty}\varepsilon(x)=0$, see~\cite[Theorem 1.3.1]{BGT}. According to~\eqref{eq:hat_a_n_asymp} and the slow variation of $L$ it suffices to check that, for every $\gamma>0$,
\begin{equation}\label{eq:L_a_hat_asymp}
\lim_{n\to\infty}\frac{L(n T_1^{\gamma}(n))}{L(n)}=1.
\end{equation}
Observe that, for every $\delta>0$, $x\mapsto x^{\delta}T_1(x)$ is eventually increasing. According to~\cite[Theorem 2]{Bojanic+Seneta:1971}, limit relation~\eqref{eq:L_a_hat_asymp} holds true provided that
\begin{equation}\label{eq:L_a_hat_asymp_proof1}
\lim_{x\to+\infty}\frac{L(\lambda x)-L(x)}{L(x)}\log T_1(x)=0,
\end{equation}
for every $\lambda>1$. It is known, see~\cite[p.~127]{BGT} that, for every fixed $\lambda>0$,
$$
\lim_{x\to\infty}\frac{L(\lambda x)-L(x)}{\ell^{2}(x)}=\log \lambda,
$$
which means that $L$ is a slowly varying function of the de Haan class $\Pi$. Thus,
\begin{multline*}
\frac{L(\lambda x)-L(x)}{L(x)}\log T_1(x)~=~\frac{L(\lambda x)-L(x)}{\ell^{2}(x)}\frac{\ell_2(x)}{L(x)}\log T_1(x)~\sim~(-1)\frac{L(\lambda x)-L(x)}{\ell^{2}(x)}\frac{\ell_2(x)}{L(x)}\log \log \frac{L(x)}{\ell_2(x)}\\
~\sim~(-\log \lambda)\frac{\ell_2(x)}{L(x)}\log \log \frac{L(x)}{\ell_2(x)}\to 0,\quad x\to+\infty,
\end{multline*}
which proves~\eqref{eq:L_a_hat_asymp_proof1}.
\end{proof}

\begin{proof}[Proof of Theorem~\ref{thm:critical}]
We need to verify that, for any $\alpha_1,\ldots,\alpha_m\in\mathbb{C}$,
\begin{equation}\label{eq:prop:boundary:alpha=-1/2_main_CW}
\left(\frac{1}{\sqrt{L(n)}}\sum_{i=1}^{m}\alpha_i P_n(\hat{z}_n(\psi_i,u))\right)_{u\in\mathbb{C}}~\Longrightarrow~\left(\sum_{i=1}^{m}\alpha_i \hat{N}_i+\ee^{u}\sum_{i=1}^{m}\alpha_i N_i\right)_{u\in\mathbb{C}},\quad n\to\infty,
\end{equation}
on the space $\mathcal{A}(\mathbb{C})$.

Fix $A\geq 1$, $M\in\mathbb{N}$ and decompose (for a sufficiently large $n$) the left-hand side of~\eqref{eq:prop:boundary:alpha=-1/2_main_CW} as follows
\begin{multline*}
\frac{1}{\sqrt{L(n)}}\sum_{k=0}^{\lfloor n \hat{a}_n^{-2} \rfloor}b(k)\xi_k \left(\sum_{j=1}^{m}\alpha_j \hat{z}_n^k(\psi_j,u)\right)+\frac{1}{\sqrt{L(n)}}\sum_{j=1}^{m}\alpha_j \sum_{k=\lfloor n \hat{a}_n^{-2} \rfloor+1}^{\lfloor n\ee^{-M \hat{a}_n^{-1}}\rfloor}b(k)\xi_k \hat{z}_n^k(\psi_j,u)\\
+\frac{1}{\sqrt{L(n)}}\sum_{k=\lfloor n\ee^{-M \hat{a}_n^{-1}}\rfloor+1}^{n}b(k)\xi_k \sum_{j=1}^{m}\alpha_j \hat{z}_n^k(\psi_j,u)=:\hat{Y}^{(1)}_n(u)+\sum_{j=1}^{m}\alpha_j \hat{Y}^{(2)}_{n,j}(u,M)+\hat{Y}^{(3)}_n(u,M).
\end{multline*}

We shall first prove that on the space $\mathcal{A}(\mathbb{C})$,
\begin{equation}\label{eq:prop:boundary:alpha=-1/2_Y_1}
(\hat{Y}^{(1)}_n(u))_{u\in\mathbb{C}}~\Longrightarrow~\left(\sum_{j=1}^{m}\alpha_j \hat{N}_j\right)_{u\in\mathbb{C}},\quad n\to\infty,
\end{equation}
and
\begin{equation}\label{eq:prop:boundary:alpha=-1/2_Y_3}
(\hat{Y}^{(3)}_n(u))_{u\in\mathbb{C}}~\Longrightarrow~\left(\ee^{u}\sum_{j=1}^{m}\alpha_j N_{j,M}\right)_{u\in\mathbb{C}},\quad n\to\infty,
\end{equation}
where $N_{1,M},\ldots,N_{m,M}$ are complex Gaussian variables with covariances~\eqref{eq:limit_gaussian_cov_1_trunc} and~\eqref{eq:limit_gaussian_cov_2_trunc}.

For the proof of~\eqref{eq:prop:boundary:alpha=-1/2_Y_1} observe that, for every compact set $K\subset\mathbb{C}$,
\begin{equation}\label{eq:critical_variance_asymp_1}
\lim_{n\to\infty}\sup_{u\in K}\sup_{1\leq k\leq \lfloor n \hat{a}_n^{-2}\rfloor }|\ee^{uk/n}\hat{z}_n^{k}(0,0)-1|=0
\end{equation}
because
$$
\hat{z}_n^{n \hat{a}_n^{-2}}(0,0)=\ee^{2^{-1}\hat{a}_n^{-1}}\to 1,\quad n\to\infty.
$$
Thus,~\eqref{eq:prop:boundary:alpha=-1/2_Y_1} follows from
\begin{equation}\label{eq:critical_clt_1}
\frac{1}{\sqrt{L(n)}}\sum_{k=0}^{\lfloor n \hat{a}_n^{-2} \rfloor}b(k)\xi_k \left(\sum_{j=1}^{m}\alpha_j\ee^{\ii\psi_j k}\right)~\overset{{\rm d}}{\to}~\sum_{j=1}^{m}\alpha_j \hat{N}_j,\quad n\to\infty.
\end{equation}
The latter is a consequence of the Lindeberg-Feller central limit theorem taking into account that
\begin{equation}\label{eq:critical_variance_asymp_2}
\frac{1}{L(n)}\sum_{k=0}^{\lfloor n \hat{a}_n^{-2} \rfloor}b^2(k)\ee^{\ii \theta k}
=\begin{cases}
1,&\theta\in 2\pi\mathbb{Z},\\
0,&\theta\not\in 2\pi\mathbb{Z}.
\end{cases}
\end{equation}
If $\theta\in 2\pi\mathbb{Z}$, then the above limit relation holds in view of Lemma~\ref{lem:L_a_hat_asymp} applied with $\gamma=2$ and $A=1$. If $\theta\not\in 2\pi\mathbb{Z}$, using summation by parts and setting
$$
U_n(\theta):=\sum_{k=n}^{\infty}k^{-1}\ee^{\ii\theta k},\quad n=1,2,\ldots,
$$
we obtain
\begin{multline}\label{eq:summation_by_parts1}
\sum_{k=1}^{n}b^2(k)\ee^{\ii \theta k}=\sum_{k=1}^{n}\ell^2(k)(U_k(\theta)-U_{k+1}(\theta))\\
=\ell^2(1) U_1(\theta)-\ell^{2}(n)U_{n+1}(\theta)+\sum_{k=2}^{n}U_k(\theta)(\ell^2(k)-\ell^2(k-1)).
\end{multline}
Using yet another summation by parts and the fact that $\theta\not\in 2\pi\mathbb{Z}$, we conclude
$$
|U_n(\theta)|\leq \frac{c(\theta)}{n},~\text{for some }c(\theta)>0~\text{and all}~n\in\mathbb{N}.
$$
Thus,
$$
\sum_{k=2}^{n}U_k(\theta)|\ell^2(k)-\ell^2(k-1)|\leq c(\theta)\sum_{k=2}^{n}\frac{|\ell^2(k)-\ell^2(k-1)|}{k}=c(\theta)\sum_{k=2}^{n}b^2(k)\left|1-\frac{\ell^2(k-1)}{\ell^2(k)}\right|.
$$
By the slow variation of $\ell^{2}$, the right-hand side is $o(L(n))$. Therefore,~\eqref{eq:summation_by_parts1} implies
$$
\lim_{n\to\infty}\frac{1}{L(n)}\sum_{k=0}^{n}b^2(k)\ee^{\ii \theta k}=0,\quad \theta\not\in 2\pi\mathbb{Z}.
$$
Verification of the Lindeberg-Feller condition in~\eqref{eq:critical_clt_1}, which goes along the same lines as in the proof of Theorem~\ref{thm:boundary:alpha<-1/2}, is left to the reader.

The proof of~\eqref{eq:prop:boundary:alpha=-1/2_Y_3} is similar. First, observe that
$$
\lim_{n\to\infty}\ee^{-M \hat{a}_n^{-1}}=1
$$
and thereupon
$$
\lim_{n\to\infty}\left|\sup_{\lfloor n\ee^{-M \hat{a}_n^{-1}}\rfloor+1\leq k\leq n}\ee^{uk/n}-\ee^{u}\right|=0,
$$
which means that it is sufficient to prove that
\begin{equation}\label{eq:hat_y_n3_clt_proof}
\lim_{n\to\infty}\frac{1}{\sqrt{L(n)}}\sum_{k=\lfloor n\ee^{-M \hat{a}_n^{-1}}\rfloor+1}^{n}b(k)\xi_k \sum_{j=1}^{m}\alpha_j \hat{z}_n^k(0,0)\ee^{\ii \psi_j k}~\overset{{\rm d}}{\to}~\sum_{j=1}^{m}\alpha_j N_{j,M},\quad n\to\infty.
\end{equation}
This again follows by the Lindeberg-Feller central limit theorem taking into account the next lemma

\begin{lemma}\label{lem:critical_variance_asymp_3}
Under the assumptions of Theorem~\ref{thm:critical}, for every fixed $\theta\in\mathbb{R}$ and $M>0$,
$$
\frac{1}{L(n)}\sum_{k=\lfloor n\ee^{-M \hat{a}_n^{-1}}\rfloor+1}^{n}b^2(k)\hat{z}_n^{2k}(0,0)\ee^{\ii \theta k}=\begin{cases}
(1-\ee^{-M}),&\theta\in 2\pi\mathbb{Z},\\
0,&\theta\not\in 2\pi\mathbb{Z}.
\end{cases}
$$
\end{lemma}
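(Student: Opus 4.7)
My plan is to prove Lemma~\ref{lem:critical_variance_asymp_3} by replacing $b^2(k)$ by its value at the endpoint $n$ via the uniform convergence theorem for slowly varying functions, reducing the remaining sum to a geometric progression, and then using the defining identity $\ee^{\hat{a}_n}/\hat{a}_n = L(n)/\ell^2(n)$ together with $\hat{z}_n^2(0,0) - 1 \sim \hat{a}_n/n$ to evaluate the limit. Set $K = K_n := \lfloor n\ee^{-M/\hat{a}_n}\rfloor + 1$ and $q_n := \hat{z}_n^2(0,0) = \ee^{\hat{a}_n/n}$. Since $\hat{a}_n \to +\infty$, we have $K/n \to 1$, and thus $b^2(k)/b^2(n) = (\ell^2(k)/\ell^2(n))(n/k) \to 1$ \emph{uniformly} on $K \leq k \leq n$ by~\cite[Theorem 1.5.2]{BGT}. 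Pick $\epsilon_n \to 0$ with $|b^2(k) - b^2(n)| \leq \epsilon_n b^2(n)$ throughout this range.

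Write $w := q_n \ee^{\ii \theta}$ and decompose
$$
\sum_{k=K}^n b^2(k) q_n^k \ee^{\ii\theta k} \;=\; b^2(n)\sum_{k=K}^n w^k \;+\; \sum_{k=K}^n (b^2(k)-b^2(n))\,w^k,
$$
where the second term is bounded in modulus by $\epsilon_n b^2(n)\sum_{k=K}^n q_n^k$. The main term is the geometric sum $b^2(n)\,(w^{n+1}-w^K)/(w-1)$.

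For $\theta \in 2\pi\mathbb{Z}$, we have $w=q_n$ and $w-1 \sim \hat{a}_n/n$. Expanding $K/n = \ee^{-M/\hat{a}_n} + O(1/n)$ and using $\hat{a}_n \ee^{-M/\hat{a}_n} = \hat{a}_n - M + O(1/\hat{a}_n)$ gives $q_n^K = \ee^{\hat{a}_n}(\ee^{-M} + o(1))$, while $q_n^{n+1} = \ee^{\hat{a}_n}(1+o(1))$. Therefore
$$
b^2(n)\frac{q_n^{n+1}-q_n^K}{q_n-1} \;\sim\; \frac{\ell^2(n)}{n}\cdot\frac{n}{\hat{a}_n}\cdot \ee^{\hat{a}_n}(1-\ee^{-M}) \;=\; \frac{\ell^2(n)\ee^{\hat{a}_n}}{\hat{a}_n}(1-\ee^{-M}) \;=\; L(n)(1-\ee^{-M}),
$$
by~\eqref{eq:a_n_hat_choice}. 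The error term is dominated by $\epsilon_n$ times the same geometric sum, hence is $o(L(n))$. Dividing by $L(n)$ yields $1-\ee^{-M}$. For $\theta \notin 2\pi\mathbb{Z}$, the denominator $|w-1| \to |\ee^{\ii\theta}-1| > 0$ is bounded away from zero, while $|w^{n+1}-w^K| \leq 2 q_n^{n+1} = O(\ee^{\hat{a}_n})$. Hence
$$
b^2(n)\left|\frac{w^{n+1}-w^K}{w-1}\right| \;=\; O\!\left(\frac{\ell^2(n)\ee^{\hat{a}_n}}{n}\right) \;=\; O\!\left(\frac{\hat{a}_n L(n)}{n}\right) \;=\; o(L(n)),
$$
since $\hat{a}_n \sim \log(L(n)/\ell^2(n)) = o(\log n)$ by Potter's bound for the slowly varying function $L/\ell^2$. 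The error term is again $o(L(n))$ as in the first case, completing the proof.

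The only delicate point is the sharp asymptotics $q_n^K \sim \ee^{\hat{a}_n - M}$ in the non-oscillating case, which requires controlling $\hat{a}_n(K/n - \ee^{-M/\hat{a}_n}) = O(\hat{a}_n/n) \to 0$ and Taylor-expanding $\ee^{-M/\hat{a}_n}$; everything else reduces to regular-variation bookkeeping and the key identity~\eqref{eq:a_n_hat_choice}.
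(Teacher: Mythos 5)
Your proof is correct and follows essentially the same route as the paper's: replace $b^2(k)$ by $b^2(n)$ via uniform convergence over the range $k\in[K_n,n]$ where $K_n/n\to1$, evaluate the resulting geometric sum in closed form, and feed in the identity $\ee^{\hat a_n}/\hat a_n = L(n)/\ell^2(n)$ to get $L(n)(1-\ee^{-M})$ for $\theta\in2\pi\mathbb Z$, while for $\theta\notin2\pi\mathbb Z$ the bounded-away-from-zero denominator $|q_n\ee^{\ii\theta}-1|$ forces the whole expression to be $O(\hat a_n L(n)/n)=o(L(n))$. The bookkeeping differs slightly (you keep $w^K,w^{n+1}$ directly where the paper re-indexes the sum and factors out $\ee^{\hat a_n}$), but the decomposition, the key identity, and the bounds are the same.
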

\begin{proof}
Assume first that $\theta\in 2\pi\mathbb{Z}$. By the uniform convergence theorem for regularly varying functions,
\begin{multline*}
\sum_{k=\lfloor n\ee^{-M \hat{a}_n^{-1}}\rfloor+1}^{n}b^2(k)\hat{z}_n^{2k}(0,0)~\sim~b^2(n)\sum_{k=\lfloor n\ee^{-M \hat{a}_n^{-1}}\rfloor+1}^{n}\hat{z}_n^{2k}(0,0)
=b^2(n)\ee^{\hat{a}_n}\sum_{k=0}^{n-\lfloor n\ee^{-M \hat{a}_n^{-1}}\rfloor-1}\ee^{-\hat{a}_nk/n}\\=b^2(n)\ee^{\hat{a}_n}\frac{1-\ee^{-\hat{a}_n(n-\lfloor n\ee^{-M \hat{a}_n^{-1}}\rfloor)/n}}{1-\ee^{-\hat{a}_n/n}}\overset{\eqref{eq:a_n_hat_choice}}{\sim}L(n)(1-\ee^{-\hat{a}_n(n-\lfloor n\ee^{-M \hat{a}_n^{-1}}\rfloor)/n}).
\end{multline*}
The expression in the last parentheses converges to $1-\ee^{-M}$, as $n\to\infty$, in view of $\ee^{-M\hat{a}_n^{-1}}~\sim~1-M\hat{a}_n^{-1}+o(\hat{a}_n^{-1})$. The case $\theta\not\in 2\pi\mathbb{Z}$ follows by the same reasoning as we have used in the proof if~\eqref{lem:claim2}. We omit the details.
\end{proof}

Similarly to~\eqref{eq:Lind_feller_sec_moment}, the following is sufficient for the validity of the Lindeberg-Feller condition in~\eqref{eq:hat_y_n3_clt_proof}:
$$
\lim_{n\to\infty}\sup_{\lfloor n\ee^{-M \hat{a}_n^{-1}}\rfloor\leq k\leq n}\mathbb{E}[\xi^2 \1_{\{b(k)|\xi|\hat{z}_n^k(0,0)>\delta\}}]=0.
$$
This relation holds true by the uniform convergence theorem for regularly varying functions in conjunction with
$$
b(n)\hat{z}_n^n(0,0)=b(n)\ee^{\hat{a}_n/2}~\sim~\sqrt{\frac{L(n)\hat{a}_n}{n}}~\to 0,\quad n\to\infty.
$$

To finish the proof of~\eqref{eq:prop:boundary:alpha=-1/2_main_CW} it remains to verify that, for every fixed $j=1,\ldots,m$, $\delta>0$ and a compact set $K\subset\mathbb{C}$,
$$
\lim_{M\to+\infty}\limsup_{n\to\infty}\mathbb{P}\{\sup_{u\in K}|\hat{Y}_{n,j}^{(2)}(u,M)|>\delta\}=0.
$$
By Markov's inequality and analyticity of $u\mapsto \hat{Y}_{n,j}^{(2)}(u,M)$ it suffices to check that
\begin{equation}\label{eq:billingsley_critical}
\lim_{M\to+\infty}\limsup_{n\to\infty}\sup_{u\in K}\mathbb{E}|\hat{Y}_{n,j}^{(2)}(u,M)|^2=0,\quad 1\leq j\leq m,
\end{equation}
see Lemma 4.2 in~\cite{Kab+Klim:2014}. Observe that
\begin{multline*}
\sup_{u\in K}\mathbb{E}|\hat{Y}_{n,j}^{(2)}(u,M)|^2=
\frac{\sigma^2}{L(n)}\sup_{u\in K}\sum_{k=\lfloor n \hat{a}_n^{-2} \rfloor+1}^{\lfloor n\ee^{-M \hat{a}_n^{-1}}\rfloor}b^2(k) \hat{z}_n^{2k}(0,0)\ee^{k(u+\overline{u})/n}\\
\leq \frac{\sigma^2}{L(n)}\left(\sup_{u\in K}\left(\ee^{2\Re(u)}\right)\right)\sum_{k=\lfloor n \hat{a}_n^{-2} \rfloor+1}^{\lfloor n\ee^{-M \hat{a}_n^{-1}}\rfloor}b^2(k) \hat{z}_n^{2k}(0,0).
\end{multline*}
Fix some $A>1$ and decompose the last sum as follows
$$
\sum_{k=\lfloor n \hat{a}_n^{-2} \rfloor+1}^{\lfloor n\ee^{-M \hat{a}_n^{-1}}\rfloor}b^2(k) \hat{z}_n^{2k}(0,0)=\sum_{k=\lfloor n \hat{a}_n^{-2} \rfloor+1}^{\lfloor n A\hat{a}_n^{-1} \rfloor}b^2(k) \hat{z}_n^{2k}(0,0)+\sum_{k=\lfloor n A\hat{a}_n^{-1} \rfloor+1}^{\lfloor n\ee^{-M \hat{a}_n^{-1}}\rfloor}b^2(k) \hat{z}_n^{2k}(0,0)
$$
and observe that the first sum is bounded by
$$
\ee^{A}\sum_{k=\lfloor n \hat{a}_n^{-2} \rfloor+1}^{\lfloor n \hat{a}_n^{-1} \rfloor}b^2(k)\leq \ee^{A}(L(n)-L(n \hat{a}_n^{-2}))=o(L(n)),
$$
where the last equality is a consequence of Lemma~\ref{lem:L_a_hat_asymp}.

To treat the second sum we shall use the Potter's bound for regularly varying functions, see~\cite[Theorem 1.5.6]{BGT}. Pick $0<\varepsilon<A-1$ and choose $n$ such that
$$
\frac{b^2(k)}{b^2(n)}\leq 2\left(\frac{n}{k}\right)^{1+\varepsilon},\quad n \hat{a}_n^{-1}\leq k\leq n.
$$
Then
$$
\sum_{k=\lfloor n A \hat{a}_n^{-1} \rfloor+1}^{\lfloor n\ee^{-M \hat{a}_n^{-1}}\rfloor}b^2(k) \hat{z}_n^{2k}(0,0)\leq 2b^2(n)n^{1+\varepsilon} \sum_{k=\lfloor n A \hat{a}_n^{-1} \rfloor+1}^{\lfloor n\ee^{-M \hat{a}_n^{-1}}\rfloor}\left(\frac{1}{k}\right)^{1+\varepsilon} \ee^{\hat{a}_n k/n}
$$
The function $x\mapsto x^{-(1+\varepsilon)}\ee^{\hat{a}_n x/n}$ is increasing for $x> n\hat{a}_n^{-1}(1+\varepsilon)$ and, by the choice of $\varepsilon$, for $x> n\hat{a}_n^{-1}A$. Using this fact and estimating the sum by the corresponding integral, we conclude
$$
\sum_{k=\lfloor nA\hat{a}_n^{-1}\rfloor+1}^{\lfloor n\ee^{-M \hat{a}_n^{-1}}\rfloor}k^{-(1+\varepsilon)}\ee^{\hat{a}_n k/n}\leq \int_{n A\hat{a}_n^{-1}}^{n\ee^{-M \hat{a}_n^{-1}}+1}\frac{\ee^{\hat{a}_n t/n}}{t^{1+\varepsilon}}{\rm d}t=\frac{a_n^{\varepsilon}}{n^{\varepsilon}}\int_{A}^{\hat{a}_n(\ee^{-M \hat{a}_n^{-1}} + n^{-1})}\frac{\ee^t}{t^{1+\varepsilon}}{\rm d}t.
$$
Applying the asymptotic relation $\int_{1}^{x}t^{-(1+\varepsilon)}\ee^{t}{\rm d}t~\sim~x^{-(1+\varepsilon)}\ee^{x}$, $x\to+\infty$, which can be verified by L'H\^{o}pital's rule, we conclude that
$$
\int_{A}^{\hat{a}_n(\ee^{-M \hat{a}_n^{-1}} + n^{-1})}\frac{\ee^t}{t^{1+\varepsilon}}{\rm d}t~\sim~\frac{\ee^{\hat{a}_n(\ee^{-M \hat{a}_n^{-1}} + n^{-1})}}{\hat{a}^{1+\varepsilon}_n(\ee^{-M \hat{a}_n^{-1}} + n^{-1})^{1+\varepsilon}}~\sim~\frac{\ee^{\hat{a}_n}}{\hat{a}^{1+\varepsilon}_n}\ee^{-M}.
$$
Combining pieces together we conclude that, for all sufficiently large $n\in\mathbb{N}$,
\begin{align*}
\sum_{k=\lfloor n A \hat{a}_n^{-1} \rfloor+1}^{\lfloor n\ee^{-M \hat{a}_n^{-1}}\rfloor}b^2(k) \hat{z}_n^{2k}(0,0)\leq 3 b^2(n)n^{1+\varepsilon}\frac{a_n^{\varepsilon}}{n^{\varepsilon}}\frac{\ee^{\hat{a}_n}}{\hat{a}^{1+\varepsilon}_n}\ee^{-M}~\sim~3\ell^{2}(n)\frac{\ee^{\hat{a}_n}}{\hat{a}_n}\ee^{-M}~\sim~3L(n)\ee^{-M}.
\end{align*}
Thus,~\eqref{eq:billingsley_critical} is proved. The proof of Theorem~\ref{thm:critical} is complete.
\end{proof}

\begin{ex}
\label{example:3.2}
Assume~\eqref{eq:moment_assumptions_on_xi1} and~\eqref{eq:moment_assumptions_on_xi2} and recall the polynomials $P^{{\rm crit}}_{n}$ defined in~\eqref{eq:critical}. In this case $b^2(x)=1/x$ and $L(x)=\log x+O(1)$, for $x\geq 1$. According to Theorem~\ref{thm:critical}, for any $0\leq \psi_1<\cdots<\psi_m<2\pi$,
$$
\left(\frac{1}{\sqrt{\log n}}P^{{\rm crit}}_n(\hat{z}^{{\rm crit}}_n(\psi_i,u))\right)_{u\in\mathbb{C},i=1,\ldots,m}~\Longrightarrow~(\hat{N}_i+\ee^{u}N_i)_{u\in\mathbb{C},i=1,\ldots,m},\quad n\to\infty.
$$
on the space $\mathcal{A}(\mathbb{C})^m$, where
$$
\hat{z}^{{\rm crit}}_n(\psi,u)=\exp\left(\frac{\log \log n+\log\log\log n}{2n}\right)\ee^{u/n+\ii \psi}.
$$
The form of $\hat{z}^{{\rm crit}}_n(\psi_i,u)$ follows from~\eqref{eq:z_n_alpha=-1/2} in view of the asymptotic relation
$$
W_{-1}(x)=-\log(-1/x)-\log\log(-1/x)+o(1),\quad x\to 0-,
$$
see~\eqref{eq:W_-1_asymp} below. Consequently, the sequence $$
(\zeros_{\mathbb{C}}(P^{{\rm crit}}_n(\hat{z}^{{\rm crit}}_n(\psi_1,\cdot))),\ldots,\zeros_{\mathbb{C}}(P^{{\rm crit}}_n(\hat{z}^{{\rm crit}}_n(\psi_m,\cdot))),\quad n\in\mathbb{N},
$$
converges in distribution to the lattices defined by~\eqref{eq:limit_zeros_critical}.
\end{ex}

\begin{rem}[Global distribution of zeros in the weak crystalline phase]\label{rem:zeros_in_annulus_weak_cryst_phase}
As in Theorem~\ref{thm:critical}, assume~\eqref{eq:moment_assumptions_on_xi1}, \eqref{eq:moment_assumptions_on_xi2},  $\alpha=-\nicefrac{1}{2}$ and $S(-1)=+\infty$.
Corollary~\ref{cor:3.3} suggests that,
for all real $s_1 < s_2$, the number of complex zeros of $P_n$ in the annulus $\mathbb{A}(r_n \ee^{s_1/n},r_n\ee^{s_2/n})=\{z\in \mathbb{C}: r_n\ee^{s_1/n} \leq |z| \leq r_n\ee^{s_2/n}\}$ satisfies
\begin{equation}\label{eq:annulus_weak_cryst}
\lim_{n\to\infty} \frac{1}{n} \zeros_{\mathbb{A}(r_n\ee^{s_1/n},r_n\ee^{s_2/n})} (P_n)   \to  \frac 12 \int_{s_1}^{s_2} \frac {{\rm d} s} {\cosh^2 s}, \quad n\to\infty,
\end{equation}
in probability and in $L^1$. Note that the limit on the right-hand side is \emph{deterministic}, in contrast with the situation discussed in Remark~\ref{rem:zeros_in_annulus_strong_cryst_phase}. The non-rigorous argument leading to~\eqref{eq:annulus_weak_cryst}  is similar to that leading to~\eqref{eq:annulus_strong_cryst}, but now there is no need to condition on $(P_{\infty}(\ee^{\ii \psi}))_{\psi\in [0,2\pi]}$, and the probability that the window $W_{j;n}$ contains zeros is
$$
\mathbb P \left[s_1 \leq  \log|N^{-1}_j \hat N_j| \leq s_2\right]
=
\frac 12 \int_{s_1}^{s_2} \frac {{\rm d} s} {\cosh^2 s}.
$$
Since scaling windows that are far apart behave asymptotically independently, the law of large numbers suggests~\eqref{eq:annulus_weak_cryst}.
\end{rem}


\subsection{The limiting crossovers and intensities of zeros}\label{sec:crossover}
As shown in the previous sections, the limiting Gaussian process of the appropriately scaled $P_n$ is $G_{\psi}$  when $\alpha > -\nicefrac{1}{2}$ (liquid phase), and a \emph{degenerate} Gaussian process
$(\hat{N}_1 + \ee^{u}N_1)_{u \in \mathbb{C}}$ when $\alpha = -\nicefrac{1}{2}$  and
$S(-1) = +\infty$ (weak crystalline phase). Our goal is to connect these two regimes by
describing the crossover of the limiting processes as $\alpha \to -\nicefrac{1}{2} + 0$. Throughout this section, we shall include two indices in the notation for
$G_{\psi}$, writing it as $G_{\psi,\alpha}$ to emphasize its dependence on the now-varying parameter $\alpha$.

\begin{theorem}\label{thm:crossover}
In the limit when $\alpha\to -\nicefrac{1}{2}+0$ it holds that
\begin{equation}\label{eq:crossover}
\left(\sqrt{2\alpha+1}G_{\psi,\alpha}\left(\frac{1}{2}\log \frac{1}{1+2\alpha} + \frac{1}{2}\log\log \frac{1}{1+2\alpha}+u\right)\right)_{u\in\mathbb{C}}~\Longrightarrow~(\hat{N}_1 + \ee^{u}N_1)_{u \in \mathbb{C}} 
\end{equation}
on the space $\mathcal{A}(\mathbb{C})$, where $\hat{N}_1$ and $N_1$ are independent centered complex normal variables with the same distribution determined by
\begin{equation}\label{eq:limit_gaussian_cov_alt}
\mathbb{E}[N_1\overline{N_1}]=\sigma^2,\quad \mathbb{E}[N_1^2]=(\sigma_1^2-\sigma_2^2)\1_{\{\psi\in \{0,\pi\}\}}.
\end{equation}
\end{theorem}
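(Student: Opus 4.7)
The plan is to verify convergence of the Hermitian covariance and pseudo-covariance of the rescaled process on compact subsets of $(u_1, u_2)$---since both sides of~\eqref{eq:crossover} are centered Gaussian analytic functions, this yields convergence of finite-dimensional distributions---and then to establish tightness in $\mathcal{A}(\mathbb{C})$ via the second-moment criterion of~\cite[Lemma~4.2]{Kab+Klim:2014}.

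Set $h_\alpha := 2c_\alpha = \log\!\Bigl(\frac{-\log(1+2\alpha)}{1+2\alpha}\Bigr)$, so that $h_\alpha \to +\infty$ and $h_\alpha \sim -\log(1+2\alpha)$ as $\alpha \to -\nicefrac{1}{2}+0$. I would first rewrite the Hermitian covariance arising from~\eqref{eq:covariance:Ga} via the substitution $y = h_\alpha x$:
\[
(2\alpha+1)\sigma^2 \int_0^1 x^{2\alpha} \ee^{h_\alpha x} \ee^{x(u_1+\overline{u_2})}\,{\rm d} x = (2\alpha+1)\sigma^2\, h_\alpha^{-(2\alpha+1)}\int_0^{h_\alpha} y^{2\alpha} \ee^{y\beta_\alpha}\,{\rm d} y,
\]
where $\beta_\alpha := 1 + (u_1+\overline{u_2})/h_\alpha \to 1$ uniformly for $(u_1, u_2)$ in compacts. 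The key point is that both ends of this integral contribute nontrivially to the limit.

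Next I would split the integral at a fixed $A>0$. For the lower piece, integration by parts (antidifferentiating $y^{2\alpha}$ as $y^{2\alpha+1}/(2\alpha+1)$) gives
\[
(2\alpha+1)\int_0^A y^{2\alpha} \ee^{y\beta_\alpha}\,{\rm d} y = A^{2\alpha+1} \ee^{A\beta_\alpha} - \beta_\alpha \int_0^A y^{2\alpha+1} \ee^{y\beta_\alpha}\,{\rm d} y \longrightarrow \ee^A - (\ee^A - 1) = 1
\]
as $\alpha\to -\nicefrac{1}{2}+0$ by dominated convergence, and combined with $h_\alpha^{-(2\alpha+1)} = \exp(-(2\alpha+1)\log h_\alpha) \to 1$, this piece contributes $\sigma^2$ to the covariance. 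For the upper piece, standard Laplace-type asymptotics (integration by parts with $\ee^{y\beta_\alpha}/\beta_\alpha$ as antiderivative of $\ee^{y\beta_\alpha}$) yield
\[
\int_A^{h_\alpha} y^{2\alpha} \ee^{y\beta_\alpha}\,{\rm d} y \sim \frac{h_\alpha^{2\alpha} \ee^{h_\alpha} \ee^{u_1+\overline{u_2}}}{\beta_\alpha},
\]
and multiplying by $(2\alpha+1)\sigma^2 h_\alpha^{-(2\alpha+1)}$ and invoking the key identity $(2\alpha+1) h_\alpha^{-1} \ee^{h_\alpha} = -\log(1+2\alpha)/h_\alpha \to 1$ converts this into $\sigma^2 \ee^{u_1+\overline{u_2}}$. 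Summing, the Hermitian covariance tends to $\sigma^2\bigl(1 + \ee^{u_1+\overline{u_2}}\bigr)$, matching $\mathbb{E}\bigl[(\hat N_1 + \ee^{u_1}N_1)\overline{(\hat N_1 + \ee^{u_2}N_1)}\bigr]$. An identical computation starting from~\eqref{eq:covariance:Gb} yields the pseudo-covariance $(\sigma_1^2-\sigma_2^2)\1_{\{\psi\in\{0,\pi\}\}}\bigl(1 + \ee^{u_1+u_2}\bigr)$, matching the limit by~\eqref{eq:limit_gaussian_cov_alt}. Tightness then follows from the convergence (hence local uniform boundedness) of the second moment $\mathbb{E}[|\sqrt{2\alpha+1}\,G_{\psi,\alpha}(u+c_\alpha)|^2]$ via~\cite[Lemma~4.2]{Kab+Klim:2014}, closing the argument.

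The main obstacle is the delicate balance engineered into $c_\alpha$: the singular factor $x^{2\alpha}$ near $x=0$ has to produce the constant $\hat N_1$ component, while the exponential blow-up $\ee^{h_\alpha x}$ near $x=1$ has to produce the $\ee^u N_1$ component. The two requirements $h_\alpha^{-(2\alpha+1)}\to 1$ and $(2\alpha+1) h_\alpha^{-1} \ee^{h_\alpha}\to 1$ pin $h_\alpha$ down up to $o(1)$ and are precisely what force the double-logarithmic term $\frac{1}{2}\log\log\frac{1}{1+2\alpha}$ in $c_\alpha$; a purely single-logarithmic shift would collapse one of the two contributions. Beyond this structural point, the remaining effort is to secure uniformity in $(u_1, u_2)$ on compact sets throughout the Laplace-type asymptotics, which is routine but requires care since $\beta_\alpha$ itself depends on $u_1, u_2$.
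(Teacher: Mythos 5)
Your proof is correct. It follows the paper's overall strategy --- match the Hermitian covariance and pseudo-covariance of the centered Gaussian analytic functions, then obtain tightness from uniform bounds on the second moment via~\cite[Lemma~4.2]{Kab+Klim:2014} --- but the computation of the limiting covariance takes a genuinely different route. The paper sets $\tau:=2\alpha+1$, writes $s(\tau,u)=\log\tau^{-1}+\log\log\tau^{-1}+u$, and establishes $\lim_{\tau\to 0+}\tau\Phi_{\tau-1}(s(\tau,u))=1+\ee^u$ by splitting $\int_0^1 x^{\tau-1}\ee^{xs(\tau,u)}\,{\rm d}x$ at $x=\nicefrac{1}{2}$: the $[\nicefrac{1}{2},1]$ piece is evaluated via the asymptotic $\int_1^t x^{-1}\ee^x\,{\rm d}x\sim\ee^t/t$ with the \emph{complex} argument $t=s(\tau,u)$ (so the integral runs along a ray in $\mathbb{C}$); the $[0,\nicefrac{1}{2}]$ piece is rescaled by $s(\tau,u)$ and split again at $y=1$, with the middle part shown to be negligible. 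Your real-valued substitution $y=h_\alpha x$ keeps the contour on the real axis, packs the $u$-dependence into the multiplicative factor $\beta_\alpha$, and splits at a \emph{fixed} threshold $A$ that plays no asymptotic role: the lower piece yields the constant term by dominated convergence after one integration by parts, independent of $A$, and the upper piece yields $\ee^{u_1+\overline{u_2}}$ by a single Laplace-type integration by parts. Your two pinning identities $h_\alpha^{-(2\alpha+1)}\to 1$ and $(2\alpha+1)h_\alpha^{-1}\ee^{h_\alpha}\to 1$ are exactly what the paper encodes in $\lim_{\tau\to 0+}\tau\ee^{s(\tau,0)}/s(\tau,0)=1$, and your remark that they force the $\frac{1}{2}\log\log$ term in the centering makes the mechanism of the crossover more transparent than the paper's presentation does. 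The trade-off is that your upper-piece Laplace step needs uniformity in $\beta_\alpha$ over compacta, which you flag as the remaining technical point; it does go through (the boundary term at $A$ is $O(1)$ against the $\sim h_\alpha^{-1}\ee^{h_\alpha}\to\infty$ boundary term at $h_\alpha$, and the integrated remainder carries an extra $h_\alpha^{-1}$), but it is the one step that would need to be written out in full.
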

\begin{proof}
Put $\tau:=2\alpha+1$. Thus, $\alpha\to -\nicefrac{1}{2}+0$ if and only if $\tau\to 0+$. Since the processes on both sides of~\eqref{eq:crossover} are analytic Gaussian, the stated convergence follows once we show that the covariances of the processes on the left-hand side converge locally uniformly on $\mathbb{C}^2$ to the covariance of the process on the right-hand side. The tightness follows then immediately since the second moments are uniformly bounded on compact sets, see~\cite[Lemma 4.2]{Kab+Klim:2014}. Recalling the notation  for the function $\Phi_{\beta}$ in~\eqref{eq:Phi_def} and the relations~\eqref{eq:covariance:Ga} and~\eqref{eq:covariance:Gb}, we see that it suffices to show that locally uniformly in $u\in\mathbb{C}$.
\begin{equation}\label{eq:crossover_covariance}
\lim_{\tau\to 0+}\tau\Phi_{\tau-1}(\log \tau^{-1}+\log\log \tau^{-1}+u)=1+\ee^{u}.
\end{equation}
Put $s(\tau,u):=\log \tau^{-1}+\log\log \tau^{-1}+u$ and write
$$
\Phi_{\tau-1}(\log \tau^{-1}+\log\log \tau^{-1}+u)=\int_0^{1}x^{\tau-1}\ee^{x s(\tau,u)}{\rm d}x=\int_0^{\nicefrac{1}{2}}x^{\tau-1}\ee^{x s(\tau,u)}{\rm d}x+\int_{\nicefrac{1}{2}}^{1}x^{\tau-1}\ee^{x s(\tau,u)}{\rm d}x
$$
For the second integral observe that $x^{\tau-1}$ converges to $x^{-1}$ uniformly in $x\in [\nicefrac{1}{2},1]$, as $\tau\to 0+$. Therefore, locally uniformly in $u\in\mathbb{C}$,
\begin{multline*}
\int_{\nicefrac{1}{2}}^{1}x^{\tau-1}\ee^{x s(\tau,u)}{\rm d}x~\sim~\int_{\nicefrac{1}{2}}^{1}x^{-1}\ee^{x s(\tau,u)}{\rm d}x=\int_{s(\tau,u)/2}^{s(\tau,u)}x^{-1}\ee^{x}{\rm d}x\\
=\int_{1}^{s(\tau,u)}x^{-1}\ee^{x}{\rm d}x-\int_{1}^{s(\tau,u)/2}x^{-1}\ee^{x}{\rm d}x~\sim~\frac{\ee^{s(\tau,u)}}{s(\tau,u)}=\frac{\ee^{s(\tau,0)}\ee^{u}}{s(\tau,0)+u}~\sim~\frac{\ee^{u}}{\tau},\quad \tau\to 0+.
\end{multline*}
Here, the penultimate equivalence follows from the relation
$$
\int_{1}^{t}x^{-1}\ee^{x}{\rm d}x~\sim~\frac{\ee^{t}}{t},\quad t\to\infty,
$$
whereas the last equivalence is a consequence of $\lim_{\tau\to 0+}\frac{\tau\ee^{s(\tau,0)}}{s(\tau,0)}=1$.

For the first integral, note that
\begin{multline*}
\int_0^{\nicefrac{1}{2}}x^{\tau-1}\ee^{x s(\tau,u)}{\rm d}x=\int_0^{\nicefrac{1}{2}}x^{\tau-1}\ee^{x s(\tau,u)}{\rm d}x=(s(\tau,u))^{-\tau}\int_0^{s(\tau,u)/2}x^{\tau-1}\ee^{x}{\rm d}x\\
=(s(\tau,u))^{-\tau}\left(\int_0^{1}+\int_1^{s(\tau,u)/2}\right)x^{\tau-1}\ee^{x}{\rm d}x.
\end{multline*}
The second summand is bounded by $(s(\tau,u))^{-\tau}\ee^{s(\tau,u)/2}=o(\tau)$ locally uniformly in $u\in\mathbb{C}$. The first summand can be further decomposed as
$$
(s(\tau,u))^{-\tau}\int_0^{1}x^{\tau-1}(\ee^{x}-1){\rm d}x+\tau^{-1}(s(\tau,u))^{-\tau}~\sim~\tau^{-1},\quad \tau\to 0+,
$$
where we have used that the first integral converges to a finite constant $\int_0^{1}x^{-1}(\ee^{x}-1){\rm d}x$. Combining  pieces together we obtain~\eqref{eq:crossover_covariance}. The proof is complete.
\end{proof}

Theorem~\ref{thm:crossover} implies the convergence of the point processes. The zeros of $G_{\psi,\alpha}$, as a point process, converge after `centering' by $\frac{1}{2}\log \frac{1}{1+2\alpha}+ \frac{1}{2}\log\log \frac{1}{1+2\alpha}$ to a randomly shifted lattice. Formally, we have the following
\begin{corollary}\label{cor:crossover}
The following holds true
$$
\zeros_{\mathbb{C}}\left(G_{\psi,\alpha}\left(\frac{1}{2}\log \frac{1}{1+2\alpha}+ \frac{1}{2}\log\log \frac{1}{1+2\alpha}+(\cdot)\right)\right)~\Longrightarrow~\log(-N_1^{-1}\hat{N}_1)+2\pi {\rm i}\mathbb{Z},\quad \alpha\to -\nicefrac{1}{2}+0.
$$
\end{corollary}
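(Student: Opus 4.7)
The corollary is essentially a direct consequence of Theorem~\ref{thm:crossover} combined with Hurwitz's theorem. First, I would observe that the scalar factor $\sqrt{2\alpha+1}$ appearing on the left-hand side of~\eqref{eq:crossover} does not affect the zero set of the corresponding analytic function, so Theorem~\ref{thm:crossover} immediately yields weak convergence of the shifted random analytic functions (without any prefactor) to the limit $u\mapsto \hat{N}_1+\ee^{u}N_1$ in the space $\mathcal{A}(\mathbb{C})$.

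Next, I would invoke the continuity of the zero-counting map $\zeros_{\mathbb{C}}:\mathcal{A}(\mathbb{C})\setminus\{0\}\to M_p(\mathbb{C})$ established via Hurwitz's theorem, see~\cite[Lemma~2.2]{Shirai:2012}. To justify the use of the continuous mapping theorem, I must verify that the limit is almost surely not identically zero. The function $u\mapsto \hat{N}_1+\ee^{u}N_1$ vanishes identically precisely when $\hat{N}_1=N_1=0$, an event of probability zero, since both $N_1$ and $\hat{N}_1$ are centered complex Gaussians with $\mathbb{E}[|N_1|^2]=\mathbb{E}[|\hat{N}_1|^2]=\sigma^2>0$ by~\eqref{eq:limit_gaussian_cov_alt}.

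Finally, I would compute the zero set of the limit explicitly. On the a.s.~event $\{N_1\neq 0\}\cap\{\hat{N}_1\neq 0\}$, the equation $\hat{N}_1+\ee^{u}N_1=0$ is equivalent to $\ee^{u}=-N_1^{-1}\hat{N}_1$, with the right-hand side a non-zero complex number. Taking the principal (or any) branch of the complex logarithm, the solution set is the vertical lattice $\log(-N_1^{-1}\hat{N}_1)+2\pi\ii\mathbb{Z}$. The continuous mapping theorem then delivers the asserted convergence in $M_p(\mathbb{C})$.

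There is no substantial obstacle in the argument; the only points requiring attention are checking that the limit almost surely lies in the domain of continuity of $\zeros_{\mathbb{C}}$ and that the scalar factor $\sqrt{2\alpha+1}$ may be discarded at the level of zeros, both of which are immediate from non-degeneracy of the limiting Gaussians.
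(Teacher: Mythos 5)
Your argument is correct and takes essentially the same route as the paper, which states Corollary~\ref{cor:crossover} as an immediate consequence of Theorem~\ref{thm:crossover} and the continuity of $\zeros_{\mathbb{C}}$ via Hurwitz's theorem. You simply flesh out the details the paper leaves implicit: discarding the prefactor $\sqrt{2\alpha+1}$, checking the almost-sure non-degeneracy of $\hat{N}_1+\ee^{u}N_1$, and identifying the limiting zero set as the vertical lattice $\log(-N_1^{-1}\hat{N}_1)+2\pi\ii\mathbb{Z}$.
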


Let us emphasize that the above results describe the crossover behavior of the limiting processes and their zeros. In general, this does not imply a corresponding crossover for the zero intensities of $P_n$, except in
the case of isotropic complex normal coefficients, as discussed in Example~\ref{ex:edelman_kostlan}.
Let us now return to this example. Equation~\eqref{eq:RadialIntensity} for the radial intensity $p_n(\alpha;r)$ holds for the random polynomials $P_n$ with the $\xi$ having isotropic complex normal distribution, for every value of $\alpha$ and degree $n$. In particular, it is true also in the strong crystalline phase $\alpha\leq -\nicefrac{1}{2}$ and $S(2\alpha)<+\infty$. Recall that $z_n(0,0)$ is defined by~\eqref{eq:z_n_alpha<=-1/2}. We have, for every $s\in\mathbb{R}$,
\begin{multline}\label{eq:radial_intensity_crystalline}
\frac{p_n (\alpha;z_n(0,0)\ee^{s/n})}{n^2} ~\sim~ \\ \frac{1}{\pi n^2} \frac{S_n(2\alpha+2;\ell;z^2_n(0,0)\ee^{2s/n}) S_n(2\alpha;\ell; z^2_n(0,0)\ee^{2s/n}) - S_n^2(2\alpha+1;\ell; z^2_n(0,0)\ee^{2s/n})}{S_n^2(2\alpha;\ell; z^2_n(0,0)\ee^{2s/n})}.
\end{multline}
Using the decomposition
\begin{multline*}
S_n(2\alpha;\ell; z^2_n(0,0)\ee^{2z/n})=\sum_{k=1}^{n}b^2(k)z_n^{2k}(0,0)\ee^{2zk/n}\\
=\left(\sum_{k=1}^{\lfloor \log n\rfloor }+\sum_{k=\lfloor \log n\rfloor}^{n-M\lfloor n a_n^{-1}\rfloor}+\sum_{k=n-M\lfloor n a_n^{-1}\rfloor+1}^{n}\right)b^2(k)z_n^{2k}(0,0)\ee^{2zk/n},\quad z\in\mathbb{C},
\end{multline*}
and sending first $n\to\infty$ and then $M\to\infty$, we obtain from Lemma~\ref{lem:Y_n_3_variance} and formula~\eqref{eq:Y2_to_zero_1} that
$$
\lim_{n\to\infty}S_n(2\alpha;\ell; z^2_n(0,0)\ee^{2z/n})=\sum_{k\geq 1}b^2(k)+\ee^{2z}=S(2\alpha)+\ee^{2z}
$$
locally uniformly in $z\in\mathbb{C}$. The locally uniform convergence of analytic functions implies the convergence of their derivatives, whence
$$
\lim_{n\to\infty}\frac{2}{n}S_n(2\alpha+1;\ell; z^2_n(0,0)\ee^{2z/n})=\lim_{n\to\infty}\frac{{\rm d}}{{\rm d}z}S_n(2\alpha;\ell; z^2_n(0,0)\ee^{2z/n})=2\ee^{2z}.
$$
and
$$
\lim_{n\to\infty}\frac{4}{n^2}S_n(2\alpha+2;\ell; z^2_n(0,0)\ee^{2z/n})=\lim_{n\to\infty}\frac{{\rm d}^2}{{\rm d}z^2}S_n(2\alpha;\ell; z^2_n(0,0)\ee^{2z/n})=4\ee^{2z}.
$$
Combining pieces together we conclude that
\begin{align}\label{eq:limit_radial_intensity_crystalline}
\lim_{n\to\infty}\frac{p_n (\alpha;z_n(0,0)\ee^{s/n})}{n^2} =
\frac{1}{4\pi\cosh^2(s-m_{\alpha})}, \quad s\in\mathbb{R},
\end{align}
where $m_{\alpha}=\frac{1}{2}\log S(2\alpha)$. Equation~\eqref{eq:limit_radial_intensity_crystalline} holds whenever $S(2\alpha)<\infty$ which is always true when $\alpha <-\nicefrac{1}{2}$ and is an additional condition on $\ell$ when $\alpha=-\nicefrac{1}{2}$. Interestingly, in the limit the dependence on $b$ is manifested only as a shift in the argument, as evident from~\eqref{eq:limit_radial_intensity_crystalline}.

The above derivation of~\eqref{eq:limit_radial_intensity_crystalline} can be amended to prove that when $\alpha=-\nicefrac{1}{2}$ and $S(-1)=+\infty$,
\begin{align}\label{eq:radial_density_alpha=-1/2}
\lim_{n\to\infty}\frac{p_n (\alpha;\hat{z}_n(0,0)\ee^{s/n})}{n^2} =
\frac{1}{4\pi\cosh^2(s)}, \quad s\in\mathbb{R}.
\end{align}
This follows from~\eqref{eq:critical_variance_asymp_1},~\eqref{eq:critical_variance_asymp_2}, Lemma~\ref{lem:critical_variance_asymp_3} and~\eqref{eq:billingsley_critical} which together imply
that locally uniformly in $z\in\mathbb{C}$ we have
$\lim_{n\to\infty}S_n(-1;\ell; \hat{z}^2_n(0,0)\ee^{2z/n})/L(n)=1+\ee^{2z}$.

The crossover as $\alpha\to -\nicefrac{1}{2}+0$ connecting the regimes~\eqref{eq:RadialIntensity1}  and~\eqref{eq:radial_density_alpha=-1/2} follows from~\eqref{eq:crossover_covariance}. Recall that $\tau=2\alpha+1$. Taking the second logarithmic derivative with respect to $u$ in~\eqref{eq:crossover_covariance}, which is possible due to the locally uniform convergence and analytitcity, we obtain
\begin{multline}\label{eq:intensity_crossover_derivation}
\lim_{\tau\to 0+}\frac{1}{\pi} \left(
\frac{\Phi_{2\alpha+2}(\log \tau^{-1}+\log\log \tau^{-1}+2s)}{\Phi_{2\alpha}(\log \tau^{-1}+\log\log \tau^{-1}+2s)}-\frac{\Phi^2_{2\alpha+1}(\log \tau^{-1}+\log\log \tau^{-1}+2s)}{\Phi^2_{2\alpha}(\log \tau^{-1}+\log\log \tau^{-1}+2s)}\right)\\
=\frac{1}{4\pi}\frac{{\rm d}}{{\rm d}s^2}\log(1+\ee^{2s})=\frac{1}{4\pi}\frac{1}{\cosh^2(s)},\quad s\in\mathbb{R}.
\end{multline}
Thus, we see that after the appropriate scaling the right-hand side of~\eqref{eq:RadialIntensity1} converges as $\alpha\to -\nicefrac{1}{2}+0$ to the right-hand side of~\eqref{eq:radial_density_alpha=-1/2}. This implies convergence of the first intensities in the case when real and imaginary parts of $\xi$ have equal variances. Then the marginal distributions of $G_{\psi}$ do not depend on $\psi$ and are isotropic Gaussian. By making use of the Edelman-Kostlan formula and according to formulae~\eqref{eq:rho},~\eqref{eq:Phi_def} and~\eqref{eq:intensity_crossover_derivation}, if $\rho_1 (\alpha, \cdot )$ denotes the intensity of zeros of $G_{0}$, then, for every $u\in \mathbb{C}$,
\begin{equation}\label{eq:convergence_of_intensities_right}
\lim_{\alpha \to -\nicefrac{1}{2} +0} \rho_1 \left(\alpha, u + \frac{1}{2}\log \big(1/(2\alpha+1)\big) + \frac{1}{2}\log \log \big(1/(2\alpha+1)\big) \right) = 1/(4\pi \cosh^2 (\Re\, u)).
\end{equation}
The function on the right-hand-side is the intensity of the zero set of $u\mapsto \hat N_1+\ee^{u}N_1$.

The crossover between strong and weak crystalline phases is given by the next
\begin{prop}\label{prop:left-hand-side-crossover}
Fix $\psi\in [0,2\pi)$ and assume that the sequence $b_m(x)=x^{\alpha_m}\ell_m(x)$, $m\geq 1$,  is chosen such that~\eqref{eq:b_m_crossover_strong_weak} holds true. Consider the sequence of random variables
$$
P_{m,\infty}(\ee^{\ii \psi}) := \sum_{k\ge 0} b_m(k) \xi_k \ee^{\ii k \psi}, \quad m\ge 1,
$$
and, furthermore, assume that
\begin{equation}\label{eq:left_crossover_assump}
\lim_{m\to\infty}\frac{\sup_{k\geq 0}b_m^2(k)}{S_m(2\alpha_m;\ell_m)}=\lim_{m\to\infty}\frac{\sup_{k\geq 0}b_m^2(k)}{\sum_{k\geq 0}b_m^2(k)}=0.
\end{equation}
Finally, if $\psi\notin\{0,\pi\}$ and $\sigma_1^2\neq \sigma_2^2$, assume that $x\mapsto b_m(x)$ is unimodal on $[A,+\infty)$, for every fixed $m\geq 1$ and some $A\geq 0$. Then
\begin{equation}\label{eq:left_crossover_claim1}
\frac{P_{m,\infty}(\ee^{\ii\psi})}{\sqrt{S_m(2\alpha_m;\ell_m)}}=\frac{\sum_{k\geq 0}b_m(k)\xi_k\ee^{\ii\psi k}}{\sqrt{S_m(2\alpha_m;\ell_m)}}~\todistr~\hat{N}_1,\quad m\to\infty.
\end{equation}
Therefore,
$$
\left(\frac{P_{m,\infty}(\ee^{\ii \psi})+\ee^{u+\nicefrac{1}{2}\log S_m(2\alpha_m;\ell_m)}N_1}{\sqrt{S_m(2\alpha_m;\ell_m)}}\right)_{u\in\mathbb{C}}~\Longrightarrow~\left(\hat{N}_1+\ee^{u}N_1\right)_{u\in\mathbb{C}},\quad m\to\infty,
$$
on the space $\mathcal{A}(\mathbb{C})$ and, therefore,
\begin{equation}\label{eq:left_crossover_claim2}
\zeros_{\mathbb{C}}(P_{m,\infty}(\ee^{\ii \psi})+\ee^{(\cdot)+\nicefrac{1}{2}\log S_m(2\alpha_m;\ell_m)}N_1)~\Longrightarrow~\zeros_{\mathbb{C}}(\hat{N}_1+\ee^{(\cdot)}N_1),\quad m\to\infty,
\end{equation}
on the space $M_p(\mathbb{C})$. Here, as before, $N_1$ and $\hat{N}_1$ are independent Gaussian random variables with covariance given by~\eqref{eq:limit_gaussian_cov_alt}.
\end{prop}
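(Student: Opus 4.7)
The backbone of the argument is the one-dimensional CLT \eqref{eq:left_crossover_claim1}; the functional statement and the convergence of zeros will then follow from soft arguments. I view
$$
\frac{P_{m,\infty}(\ee^{\ii\psi})}{\sqrt{S_m(2\alpha_m;\ell_m)}} = \sum_{k\ge 0} X_{m,k}, \qquad X_{m,k} := \frac{b_m(k)\xi_k \ee^{\ii\psi k}}{\sqrt{S_m(2\alpha_m;\ell_m)}},
$$
as a series of independent centered complex random variables (the series converges a.s.\ by Kolmogorov's two-series theorem, since $\sum_k b_m^2(k) = S_m(2\alpha_m;\ell_m) < \infty$ by \eqref{eq:b_m_crossover_strong_weak}). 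I will apply a two-dimensional Lindeberg-Feller CLT to $(\Re X_{m,k}, \Im X_{m,k})_{k\ge 0}$ as $m\to\infty$, which amounts to verifying (a) convergence of the covariance matrix, and (b) the Lindeberg condition.

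The covariance computation splits into two elementary identities:
$$
\sum_{k\ge 0}\mathbb{E}[|X_{m,k}|^2] = \sigma^2, \qquad \sum_{k\ge 0}\mathbb{E}[X_{m,k}^2] = \mathbb{E}[\xi^2]\,\frac{1}{S_m(2\alpha_m;\ell_m)}\sum_{k\ge 0} b_m^2(k)\ee^{2\ii\psi k}.
$$
Since $\mathbb{E}[\xi^2] = \sigma_1^2 - \sigma_2^2$ by \eqref{eq:moment_assumptions_on_xi2}, the first identity already gives the correct value and, for $\psi\in\{0,\pi\}$, the second identity gives $\sigma_1^2-\sigma_2^2$ in agreement with \eqref{eq:limit_gaussian_cov_alt}. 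The heart of the matter is the remaining case $\psi\in(0,\pi)\cup(\pi,2\pi)$, where I need
\begin{equation}\label{eq:plan_osc}
\frac{1}{S_m(2\alpha_m;\ell_m)}\sum_{k\ge 0} b_m^2(k)\ee^{2\ii\psi k}~\longrightarrow~0, \qquad m\to\infty.
\end{equation}
This is the main obstacle and the only place where the unimodality hypothesis enters. Setting $\omega := \ee^{2\ii\psi}\ne 1$, the Dirichlet partial sums $\sum_{k=0}^{N}\omega^k$ are uniformly bounded by $2/|1-\omega|$, so Abel's summation applied separately to the (eventually) non-decreasing and non-increasing pieces of the unimodal sequence $(b_m^2(k))_{k\ge A}$ yields
$$
\Bigl|\sum_{k\ge 0}b_m^2(k)\ee^{2\ii\psi k}\Bigr| \le C_\psi\,\sup_{k\ge 0} b_m^2(k) + O(1),
$$
which is $o(S_m(2\alpha_m;\ell_m))$ by \eqref{eq:left_crossover_assump} together with $S_m(2\alpha_m;\ell_m)\to\infty$; this proves \eqref{eq:plan_osc}. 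When $\psi\in\{0,\pi\}$ or $\sigma_1=\sigma_2$, the second moment vanishes trivially and unimodality is not needed. The Lindeberg condition is then routine: for fixed $\varepsilon>0$, using the trivial estimate $b_m(k)/\sqrt{S_m(2\alpha_m;\ell_m)} \le \delta_m := (\sup_k b_m^2(k)/S_m(2\alpha_m;\ell_m))^{1/2}\to 0$,
$$
\sum_{k\ge 0}\mathbb{E}[|X_{m,k}|^2\1_{\{|X_{m,k}|>\varepsilon\}}]\le \mathbb{E}[|\xi|^2 \1_{\{|\xi|>\varepsilon/\delta_m\}}]\longrightarrow 0
$$
by dominated convergence, since $\mathbb{E}[|\xi|^2]<\infty$ and $\varepsilon/\delta_m\to\infty$. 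Assembling these ingredients proves \eqref{eq:left_crossover_claim1}.

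For the functional statement, I note that $N_1$ is independent of $(\xi_k)_{k\ge 0}$ and hence of $P_{m,\infty}(\ee^{\ii\psi})$ for every $m$; consequently \eqref{eq:left_crossover_claim1} upgrades to the joint convergence
$$
\bigl(P_{m,\infty}(\ee^{\ii\psi})/\sqrt{S_m(2\alpha_m;\ell_m)},\,N_1\bigr)~\todistr~(\hat N_1, N_1),\qquad m\to\infty,
$$
with $\hat N_1$ and $N_1$ independent. Dividing the random analytic function in the statement by $\sqrt{S_m(2\alpha_m;\ell_m)} = \ee^{\frac12 \log S_m(2\alpha_m;\ell_m)}$ reduces it to $u\mapsto P_{m,\infty}(\ee^{\ii\psi})/\sqrt{S_m(2\alpha_m;\ell_m)} + \ee^{u}N_1$, and the map $(A,B)\mapsto (A+\ee^{u}B)_{u\in\mathbb{C}}$ is continuous from $\mathbb{C}^2$ to $\mathcal{A}(\mathbb{C})$. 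The continuous mapping theorem yields the desired functional convergence, and Hurwitz's theorem (in the form cited before Corollary~\ref{cor:2.1}) promotes it to convergence \eqref{eq:left_crossover_claim2} of the zero processes in $M_p(\mathbb{C})$.
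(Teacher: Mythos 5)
Your proof is correct and follows essentially the same route as the paper: a Lindeberg--Feller CLT for the series, with the oscillatory covariance term $\sum_k b_m^2(k)\ee^{2\ii\psi k}$ handled by Abel summation against the bounded Dirichlet kernel together with the unimodality hypothesis to control the total variation of $(b_m^2(k))$, and the Lindeberg condition reduced to $\sup_k b_m(k)/\sqrt{S_m}\to 0$ via assumption~\eqref{eq:left_crossover_assump}. The explicit passage to the functional statement via joint convergence, the continuous map $(A,B)\mapsto (A+\ee^u B)_{u\in\mathbb{C}}$, and Hurwitz's theorem is a small addition but not a different approach.
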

\begin{proof}
We only need to prove~\eqref{eq:left_crossover_claim1}. Observe that $P_{m,\infty}(\ee^{\ii\psi})$ is an infinite sum of independent random variables with finite second moments. Thus, the Lindeberg-Feller theorem applies and amounts to checking that
$$
\lim_{m\to\infty}\mathbb{E}\left[\frac{P_{m,\infty}(\ee^{\ii \psi})\overline{P_{m,\infty}(\ee^{\ii \psi})}}{S_m(2\alpha_m;\ell_m)}\right]=\sigma^2,\quad\lim_{m\to\infty}\mathbb{E}\left[\frac{P^2_{m,\infty}(\ee^{\ii \psi})}{S_m(2\alpha_m;\ell_m)}\right]=(\sigma_1^2-\sigma_2^2)\1_{\{\psi\in \{0,\pi\}\}},
$$
and the Lindeberg-Feller condition. The first relation in the last display is trivial. The second is trivial if $\sigma_1^2=\sigma_2^2$ and, otherwise, is equivalent to
\begin{equation}\label{eq:left_crossover_covariances_claim}
\lim_{m\to\infty}\frac{1}{S_m(2\alpha_m;\ell_m)}\sum_{k\geq 0}b_m^2(k)\ee^{2\ii \psi k}
=\begin{cases}
1,&\psi\in \{0,\pi\},\\
0,&\psi\in (0,\pi)\cup (\pi,2\pi).
\end{cases}
\end{equation}
If $\psi\not\in \{0,\pi\}$, this can be checked using unimodality. Indeed,
$$
\sum_{k\geq 0}b_m^2(k)\ee^{2\ii \psi k}=b_m^2(0)+\sum_{k\geq 1}(b_m^2(k)-b_m^2(k+1))\sum_{j=1}^{k}\ee^{2\ii j\psi}.
$$
The absolute value of the inner sum is bounded by an absolute constant $c(2\psi)>0$, whence
$$
\left|\sum_{k\geq 0}b_m^2(k)\ee^{2\ii \psi k}\right|\leq b_m^2(0)+c(2\psi)\sum_{k\geq 1}|b_m^2(k)-b_m^2(k+1)|\leq (1+Bc(2\psi)) \sup_{k\geq 0}b_m^2(k),
$$
with the last inequality holding for some $B>0$ and being a consequence of the unimodality.

Finally, the Lindeberg-Feller condition reads
$$
\lim_{m\to\infty}\frac{1}{S_m(2\alpha_m;\ell_m)}\sum_{k\geq 0}b_m^2(k)\mathbb{E}[|\xi|^2\1_{\{b_m(k)|\xi|\geq \delta S_m^{1/2}(2\alpha_m;\ell_m)\}}]=0,\quad \delta>0,
$$
and follows from
$$
\lim_{m\to\infty}\sup_{k\geq 0}\mathbb{E}[|\xi|^2\1_{\{b_m(k)|\xi|\geq \delta S_m^{1/2}(2\alpha_m;\ell_m)\}}]\leq \lim_{m\to\infty}\mathbb{E}[|\xi|^2\1_{\{\sup_{k\geq 0}b_m(k)|\xi|\geq \delta S_m^{1/2}(2\alpha_m;\ell_m)\}}]=0,
$$
which is a consequence of~\eqref{eq:left_crossover_assump}.
\end{proof}

\begin{rem}
Condition~\eqref{eq:left_crossover_assump} ensures that the individual summands are asymptotically negligible and is necessary for the central limit theorem in~\eqref{eq:left_crossover_claim1}. Although the unimodality assumption could be weakened to any condition that guarantees~\eqref{eq:left_crossover_covariances_claim}, identifying the optimal set of assumptions lies beyond the scope of this paper. If $\ell_m=\ell$, for all $m\geq 1$ and $\ell$ such that $S(-1;\ell)=+\infty$, then condition~\eqref{eq:b_m_crossover_strong_weak} is equivalent to $\alpha_m\to -\nicefrac{1}{2}-0$ and~\eqref{eq:left_crossover_assump} is always true since the numerator is uniformly bounded in $m$.
\end{rem}

\section{Self-inversive polynomials}\label{sec:si_polys}
Motivated by the results on self-inversive polynomials obtained in~\cite{Bogomolny+Bohigas+Leboeuf:1992} in this section we apply our findings to the analysis of the self-inversive polynomials $K_m$ defined by~\eqref{eq:K_m_polys_def}. As $m\to\infty$, the zeros of $K_m$ in the unit disk converge to  a nontrivial point process. This is a straightforward corollary of Proposition~\ref{prop:1}.
\begin{prop}[Convergence inside the unit disk: zeros of $K_m(z)$ ]
Assume~\eqref{eq:xi_log_moment}. Then
$$
\zeros_{\mathbb{D}_1}(K_m(z))~\Longrightarrow~\zeros_{\mathbb{D}_1}\big(1+\sum\limits_{k\ge 1} b(k)\xi_kz^k \big),\quad m\to\infty,
$$
on the space $\mathcal{M}_p(\mathbb{D}_1)$.
\end{prop}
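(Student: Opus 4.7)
The plan is to upgrade the claimed weak convergence to almost sure locally uniform convergence of $K_m$ on $\mathbb{D}_1$ and then apply Hurwitz's theorem exactly as in Corollary~\ref{cor:2.1}. From the definition of $K_m$ in~\eqref{eq:K_m_polys_def}, the natural candidate for the limit is obtained by keeping only the first two summands and ignoring the two terms carrying the prefactor $z^{2m+1}$, which should be negligible on compact subsets of $\mathbb{D}_1$.

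More precisely, fix a compact $K\subset\mathbb{D}_1$ and let $\rho:=\sup_{z\in K}|z|<1$. The second summand $P_m(z)=\sum_{k=1}^{m}b(k)\xi_k z^k$ converges a.s.\ locally uniformly on $\mathbb{D}_1$ to $\sum_{k\geq 1}b(k)\xi_k z^k$ by the same radius-of-convergence argument that underlies Proposition~\ref{prop:1}, since~\eqref{eq:slln_xi} and $\log b(k)/k\to 0$ still apply. The term $z^{2m+1}$ trivially vanishes on $K$ like $\rho^{2m+1}$. The only nontrivial piece is
\[
z^{2m+1}\overline{P_m(\overline{z}^{-1})}=\sum_{k=1}^{m}b(k)\overline{\xi_k}\,z^{2m+1-k}.
\]
I would bound this on $K$ by
\[
\sup_{z\in K}\Big|\sum_{k=1}^{m}b(k)\overline{\xi_k}z^{2m+1-k}\Big|\leq \rho^{2m+1}\sum_{k=1}^{m}b(k)|\xi_k|\rho^{-k}.
\]
Using~\eqref{eq:slln_xi} together with $\log b(k)/k\to 0$, for every $\delta>0$ there exists a (random) $k_0$ such that $b(k)|\xi_k|\leq (1+\delta)^k$ for all $k\geq k_0$ a.s. Choosing $\delta>0$ so small that $\rho(1+\delta)<1$, the sum is dominated by a constant times $\rho^{2m+1}(1+\delta)^m\rho^{-m}=\big(\rho(1+\delta)\big)^{m}\cdot\rho^{m+1}$, which decays exponentially to $0$ a.s.

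Combining these three estimates gives
\[
K_m\;\toas\;1+\sum_{k\geq 1}b(k)\xi_k z^k\quad\text{on }\mathcal{A}(\mathbb{D}_1),\quad m\to\infty.
\]
Since the limiting analytic function is not identically zero (its value at $0$ is $1$), Hurwitz's theorem~\cite[Lemma 2.2]{Shirai:2012}, applied exactly as in the passage from Proposition~\ref{prop:1} to Corollary~\ref{cor:2.1}, yields the vague convergence of the zero point processes in $M_p(\mathbb{D}_1)$ and in particular the distributional convergence claimed in the proposition. I do not foresee a genuine obstacle: the only step that requires any care is the exponential bound on $z^{2m+1}\overline{P_m(\overline{z}^{-1})}$, and this is handled by the standard Cauchy root trick that already underlies Proposition~\ref{prop:1}.
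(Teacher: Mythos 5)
Your proof is correct and is essentially the argument the paper has in mind: the paper merely asserts this is ``a straightforward corollary of Proposition~\ref{prop:1}'' and gives no details, whereas you correctly fill in the one nontrivial step, namely bounding the reflected term $z^{2m+1}\overline{P_m(\overline{z}^{-1})}=\sum_{k=1}^m b(k)\overline{\xi_k}z^{2m+1-k}$ on a compact $K\subset\mathbb{D}_1$ by $\rho^{2m+1}\sum_{k=1}^m b(k)|\xi_k|\rho^{-k}$ and using~\eqref{eq:slln_xi} plus $\log b(k)/k\to 0$ to see this decays geometrically. The only points worth making explicit are (a) the first finitely many terms $k<k_0$ contribute $O(\rho^{2m+1})$ and so are harmless, and (b) since each $K_m(0)=1$ and the limit also equals $1$ at $z=0$, both $K_m$ and the limit lie in $\mathcal{A}(\mathbb{D}_1)\setminus\{0\}$, so Hurwitz's theorem applies — you noted (b), and (a) is implicit in your ``constant times'' phrasing.
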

Recalling that the zeros of $K_m$ occur in pairs $(z,1/\overline{z})$, the above proposition also yields a limit theorem for zeros outside $\mathbb{D}_1$. As before, we now turn to the analysis of zeros near the unit circle.

\subsection{Case \texorpdfstring{$\alpha>-\nicefrac{1}{2}$}{alpha>-1/2}.}
Theorem~\ref{thm:boundary:alpha>-1/2} implies that under the assumptions~\eqref{eq:moment_assumptions_on_xi1},~\eqref{eq:moment_assumptions_on_xi2} and $\alpha>-\nicefrac{1}{2}$,
\begin{equation}\label{eq:si_poly0}
\left(\frac{K_m(\ee^{u/m})}{b(m)\sqrt{m}}\right)_{u\in\mathbb{C}}~\Longrightarrow~\left(\mathfrak{G}(u)\right)_{u\in\mathbb{C}},\quad m\to\infty,
\end{equation}
on the space $\mathcal{A}(\mathbb{C})$, where
$$
\mathfrak{G}(u):=G_0(u)+\ee^{2u}\overline{G_0(-\overline{u})},\quad u\in\mathbb{C}.
$$
Hurwitz's theorem, of course, implies convergence of the point process of complex zeros of $K_m(\ee^{(\cdot)/m})$ to the point process of zeros of $\mathfrak{G}$. However, more can be said about zeros of $K_m$ that lie {\bf exactly} on the unit circle $\partial \mathbb{D}_1$. Put $\widetilde{K}_m(t):=\ee^{-\ii t(m+\nicefrac{1}{2})}K_m(\ee^{\ii t})$ and observe that
\begin{equation}\label{eq:si_poly1}
\overline{\widetilde{K}_m(t)}=\widetilde{K}_m(t),\quad t\in\mathbb{R}.
\end{equation}
Let $\mathcal{A}_{\mathbb{R}}(\mathbb{C})$ be a subspace of $\mathcal{A}(\mathbb{C})$ consisting of all functions in $\mathcal{A}(\mathbb{C})$ which take real values on $\mathbb{R}$. Equality~\eqref{eq:si_poly1} implies that $\mathbb{P}\{\widetilde{K}_m\in \mathcal{A}_{\mathbb{R}}(\mathbb{C})\}=1$. Since $\mathcal{A}_{\mathbb{R}}(\mathbb{C})$ is a closed subspace of $\mathcal{A}(\mathbb{C})$,~\eqref{eq:si_poly0} yields
\begin{equation}\label{eq:si_poly2}
\left(\frac{\widetilde{K}_m(t/m)}{b(m)\sqrt{m}}\right)_{t\in\mathbb{C}}~\Longrightarrow~\left(\ee^{-\ii t}\mathfrak{G}(\ii t)\right)_{t\in\mathbb{C}},\quad m\to\infty,
\end{equation}
on the space $\mathcal{A}_{\mathbb{R}}(\mathbb{C})$. The analytic process $\widetilde{\mathfrak{G}}(t):=\ee^{-\ii t}\mathfrak{G}(\ii t)$ satisfies
$$
\widetilde{\mathfrak{G}}(t)=\ee^{-\ii t}G_0(\ii t)+\overline{\ee^{-\ii t}G_0(\ii t)}=2\Re(\ee^{-\ii t}G_0(\ii t)),\quad t\in\mathbb{R}.
$$
Using the integral representation in Remark~\ref{rem:integral_representation} we can also write
\begin{multline}\label{eq:s_tilde_integral_rep}
\widetilde{\mathfrak{G}}(t)=2\Re\left(\int_0^1 x^{\alpha}\ee^{\ii t(x-1)}{\rm d}B(x)\right)\\
=2\sigma_1\int_0^{1}x^{\alpha}\cos(t(x-1)){\rm d}B_1(x)-2\sigma_2\int_0^{1}x^{\alpha}\sin(t(x-1)){\rm d}B_2(x),\quad t\in\mathbb{R},
\end{multline}
where $B_1$ and $B_2$ are independent real standard Brownian motions. In Lemma~\ref{lem:variance} in the Appendix we check that with probability one the process $(\widetilde{\mathfrak{G}}(t))_{t\in\mathbb{R}}$ does not have multiple zeros on the real line. Thus, Lemma 4.2 in~\cite{Iksanov+Kabluchko+Marynych:2016} together with~\eqref{eq:si_poly2} imply the following
\begin{prop}
Assume~\eqref{eq:moment_assumptions_on_xi1},~\eqref{eq:moment_assumptions_on_xi2} and $\alpha>-\nicefrac{1}{2}$. Then
\begin{equation}\label{eq:conv_to_s_tilde}
\zeros_{\mathbb{R}}(K_m(\ee^{\ii (\cdot)/m}))~\Longrightarrow~\zeros_{\mathbb{R}}(\widetilde{\mathfrak{G}}(\cdot)),\quad m\to\infty.
\end{equation}
on the space $M_p(\mathbb{R})$, where $\widetilde{\mathfrak{G}}$ is defined by~\eqref{eq:s_tilde_integral_rep}.
\end{prop}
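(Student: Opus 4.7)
The plan is to combine the functional convergence \eqref{eq:si_poly2} already at our disposal with a continuous-mapping argument for the real-zero counting map, the obstruction to whose continuity is precisely the presence of multiple real zeros in the limit.

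First, I would observe that $\ee^{-\ii t(m+\nicefrac{1}{2})}\neq 0$ for every $t\in\mathbb{R}$, so the map $t\mapsto K_m(\ee^{\ii t/m})$ and the map $t\mapsto (b(m)\sqrt{m})^{-1}\widetilde{K}_m(t/m)$ have the same real zero set, with the same multiplicities. Hence
$$
\zeros_{\mathbb{R}}\bigl(K_m(\ee^{\ii (\cdot)/m})\bigr) \;=\; \zeros_{\mathbb{R}}\Bigl(\tfrac{1}{b(m)\sqrt{m}}\,\widetilde{K}_m((\cdot)/m)\Bigr),
$$
and the question reduces to a continuous-mapping statement for the sequence appearing on the left-hand side of~\eqref{eq:si_poly2}, all of whose members lie in $\mathcal{A}_{\mathbb{R}}(\mathbb{C})$.

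Next, I would invoke \cite[Lemma 4.2]{Iksanov+Kabluchko+Marynych:2016}, which says that the real-zero counting map $\zeros_{\mathbb{R}}\colon \mathcal{A}_{\mathbb{R}}(\mathbb{C})\setminus\{0\}\to M_p(\mathbb{R})$ is continuous at every $f$ whose real zeros are all simple. The intuitive obstruction is exactly the usual one: at a double real zero an arbitrarily small real-analytic perturbation can produce either no real zero nearby or a pair of complex conjugate zeros off the real axis, so the map fails to be continuous there. Combining this lemma with the functional convergence~\eqref{eq:si_poly2} and the continuous-mapping theorem yields \eqref{eq:conv_to_s_tilde}, provided the limit $\widetilde{\mathfrak{G}}$ is a.s.\ not identically zero and has only simple real zeros; the former is immediate from $\Var\widetilde{\mathfrak{G}}(0)>0$, which follows from $\alpha>-\nicefrac{1}{2}$ via the integral representation~\eqref{eq:s_tilde_integral_rep}.

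The remaining and only substantive step is thus the a.s.\ simplicity of the real zeros of $\widetilde{\mathfrak{G}}$, which is the content of Lemma~\ref{lem:variance} in the Appendix, so I would simply cite it here. Heuristically, one uses that for each fixed $t\in\mathbb{R}$ the vector $(\widetilde{\mathfrak{G}}(t),\widetilde{\mathfrak{G}}'(t))$ is a centered Gaussian vector with nondegenerate covariance matrix (nondegeneracy being a consequence of the representation~\eqref{eq:s_tilde_integral_rep} and the condition $\alpha>-\nicefrac{1}{2}$), so that a standard Bulinskaya-type argument rules out the existence of $t\in\mathbb{R}$ with $\widetilde{\mathfrak{G}}(t)=\widetilde{\mathfrak{G}}'(t)=0$ almost surely. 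The main obstacle in this program is really only this Gaussian nondegeneracy check; once it is in hand, the point-process convergence follows mechanically from the functional convergence already proved.
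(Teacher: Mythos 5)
Your argument is exactly the paper's: reduce to the functional convergence~\eqref{eq:si_poly2} on $\mathcal{A}_{\mathbb{R}}(\mathbb{C})$, invoke Lemma~4.2 of Iksanov--Kabluchko--Marynych for continuity of $\zeros_{\mathbb{R}}$ at functions with simple real zeros, and cite Lemma~\ref{lem:variance} for the a.s.\ simplicity of the real zeros of $\widetilde{\mathfrak{G}}$. One small imprecision worth noting: your claim that $\Var\widetilde{\mathfrak{G}}(0)>0$ follows from $\alpha>-\nicefrac{1}{2}$ fails when $\sigma_1=0$, since then $\widetilde{\mathfrak{G}}(0)=2\sigma_1\int_0^1 x^\alpha\,\mathrm{d}B_1(x)=0$ almost surely; the paper's Lemma~\ref{lem:variance} handles this degenerate case separately (showing $\widetilde{\mathfrak{G}}'(0)$ is a.s.\ nonzero), and non-triviality of $\widetilde{\mathfrak{G}}$ should instead be deduced from $\Var\widetilde{\mathfrak{G}}(t)>0$ for generic $t\neq 0$.
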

\begin{rem}
If $\alpha=0$, then~\eqref{eq:s_tilde_integral_rep} can be simplified to
\begin{equation}\label{eq:s_tilde_integral_rep_alpha=0}
\widetilde{\mathfrak{G}}(t)\od 2\Re\left(\int_0^1 \ee^{\ii tx}{\rm d}B(x)\right)\od 2\sigma_1\int_0^{1}\cos(tx){\rm d}B_1(x)+2\sigma_2\int_0^{1}\sin(tx){\rm d}B_2(x),\quad t\in\mathbb{R},
\end{equation}
using the time-reversal property of the Brownian motion: $(B(x))_{x\in [0,1]}\od (B(1)-B(1-x))_{x\in [0,1]}$. In this case the convergence~\eqref{eq:conv_to_s_tilde} has been proved in~\cite[Theorem 2.3]{Iksanov+Kabluchko+Marynych:2016}, see also Section 3.1.2 of that paper. Further results on the roots of random trigonometric polynomials can be found in the recent works~\cite{Angst+Pautrel+Poly:2022,Do+Nguyen+Nguyen:2022}, along with references to earlier research.
\end{rem}

\subsection{Case \texorpdfstring{$\alpha\leq-\nicefrac{1}{2}$}{alpha<=-1/2} and \texorpdfstring{$S(2\alpha;\ell)<+\infty$}{S(2alpha;l)<+infty}.}
Using the assumption $S(2\alpha)<+\infty$ it is easy to see that
\begin{equation}\label{eq:si_b^2_k_summable}
(P_m(\ee^{u/m}))_{u\in\mathbb{C}}~\Longrightarrow~P_{\infty}(1):=\sum_{k\geq 1}b(k)\xi_k,\quad m\to\infty,
\end{equation}
on the space $\mathcal{A}(\mathbb{C})$. Indeed, for every compact set $K\subset\mathbb{C}$,
$$
\sup_{u\in K}\left|\sum_{k=1}^{n}b(k)\xi_k(\ee^{uk/m}-1)\right|\leq \sum_{k=1}^{m}b(k)|\xi_k|\sup_{u\in K}\left|\ee^{uk/m}-1\right|.
$$
The right-hand side converges a.s.~to zero, as $m\to\infty$, by the dominated convergence theorem. Formula~\eqref{eq:si_b^2_k_summable} implies that
\begin{equation}\label{eq:si_b^2_k_summable_flt}
(\widetilde{K}_m(t/m))_{t\in\mathbb{C}}~\Longrightarrow~(\ee^{-\ii t}+\ee^{-\ii t}P_{\infty}(1)+\ee^{\ii t}\overline{P_{\infty}(1)}+\ee^{\ii t})_{t\in\mathbb{C}},\quad m\to\infty
\end{equation}
on the space $\mathcal{A}_{\mathbb{R}}(\mathbb{C})$. For $t\in\mathbb{R}$, the right-hand can be written as
$$
\ee^{-\ii t}+\ee^{-\ii t}P_{\infty}(1)+\ee^{\ii t}\overline{P_{\infty}(1)}+\ee^{\ii t}=2\cos (t)+2\Re(\ee^{-\ii t}P_{\infty}(1))=(2+2\Re P_{\infty}(1))\cos t+(2\Im P_{\infty}(1))\sin t.
$$
Therefore, Lemma 4.2 in~\cite{Iksanov+Kabluchko+Marynych:2016} implies that the point process  $\zeros_{\mathbb{R}}(K_m(\ee^{\ii (\cdot)/m}))$ of the real zeros of $K_m(\ee^{\ii (\cdot)/m})$ converges in distribution to a lattice defined by the set of solutions to
$$
\tan t=-\frac{1+\Re P_{\infty}(1)}{\Im P_{\infty}(1)},
$$
which coincides with the set of real zeros of $t\mapsto (2+2\Re P_{\infty}(1))\cos t+(2\Im P_{\infty}(1))\sin t$.
\subsection{Case \texorpdfstring{$\alpha=-\nicefrac{1}{2}$}{alpha=-1/2} and \texorpdfstring{$S(2\alpha;\ell)=+\infty$}{S(2alpha,l)=+infty}.} Here, we only state the result and outline the proof, leaving the details to the reader. By showing that the variance tends to zero, it can be checked that, for every compact set $K\subset\mathbb{C}$,
$$
\sup_{u\in K}\frac{\left|\sum_{k=1}^{m}b(k)\xi_k(\ee^{uk/m}-1)\right|}{\sqrt{L(n)}}~\overset{\mathbb{P}}{\to}~0,\quad m\to\infty,
$$
whence
$$
\left(\frac{P_m(\ee^{u/m})}{\sqrt{L(n)}}\right)_{u\in\mathbb{C}}~\Longrightarrow~\mathcal{N},\quad m\to\infty,
$$
where $\mathcal{N}$ is the complex Gaussian random variable with the same covariance structure as $\xi$. Thus,
$$
\left(\frac{\widetilde{K}_m(t/m)}{\sqrt{L(m)}}\right)_{t\in\mathbb{C}}~\Longrightarrow~\left(\ee^{-\ii t}\mathcal{N}+\ee^{\ii t}\overline{\mathcal{N}}\right)_{t\in\mathbb{C}},\quad m\to\infty,
$$
on the space $\mathcal{A}_{\mathbb{R}}(\mathbb{C})$. Thus, in the regime in focus, $\zeros_{\mathbb{R}}(K_m(\ee^{\ii (\cdot)/m})$ also converge to a lattice defined by the set of real solutions to
$$
\tan t=-\frac{\Re \mathcal{N}}{\Im \mathcal{N}}.
$$

\subsection{Isotropic Gaussian coefficients}
\label{section:5}
Assume that the coefficients $(\xi_k)_{0\leq k\leq m}$ are independent identically distributed complex Gaussian variables with isotropic distribution and the variance $\sigma^2$. Put
\begin{equation}\label{eq:g_1_2_def}
g_1(m):=\sigma^2\sum_{k=1}^{m}b^2(k),\quad g_2(m):=\sigma^2\sum_{k=1}^{m}(m+\nicefrac{1}{2}-k)^2 b^2(k).
\end{equation}
Let $\nu_m$ be the number of zeros of $K_m$ defined by~\eqref{eq:K_m_polys_def} that lie on the unit circle. According to \cite[Eq.~(A.20)]{Bogomolny+Bohigas+Leboeuf:1996}\footnote{There is a typo in the cited formula (A.20): the very last term $N$ should be replaced by $N^2$ (in the notation of the cited paper).}
\begin{multline}\label{eq:Bogo}
\mathbb{E}[\nu_m/(2m+1)]=\frac{1}{\sqrt{2\pi}}
\int_{-(g_1(m))^{-\nicefrac{1}{2}}}^{(g_1(m))^{-\nicefrac{1}{2}}}\ee^{-y^2/2}{\rm d}y\\
+\frac{1}{\pi(m+\nicefrac{1}{2})}\left(\frac{g_2(m)}{g_1(m)}\right)^{\nicefrac{1}{2}}\int_0^1\int_0^{\pi} \ee^{-\left(\frac{1}{2g_1(m)}\cos^2\phi+\frac{(m+\nicefrac{1}{2})^2}{2g_2(m)}\frac{\sin^2\phi}{x^2}\right)}{\rm d}\, \phi{\rm d}x.
\end{multline}
This identity holds for arbitrary polynomial degree and arbitrary variance profile of the coefficients. Passing in this identity to the limit $m\to\infty$ we find that the average proportion of zeros of $K_m$ on the unit circle is asymptotically non-zero.

\begin{theorem}
Assume that $b(x)=x^{\alpha}\ell(x)$ and that the function $\ell$ is slowly varying at $+\infty$.
\begin{itemize}
\item[(i)] If $\alpha\le -\nicefrac{1}{2}$, then
\begin{equation*}
\lim_{m\to\infty}\mathbb{E}[\nu_m/(2m+1)] = 1.
\end{equation*}
\item[(ii)] If $\alpha > -\nicefrac{1}{2}$, then
\begin{equation*}
\lim_{m\to\infty}\mathbb{E}[\nu_m/(2m+1)] = \frac{1}{\sqrt{(1+\alpha)(3+2\alpha)}} <1.
\end{equation*}

\end{itemize}
\end{theorem}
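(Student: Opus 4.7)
The plan is to pass to the limit $m\to\infty$ in the BBL formula \eqref{eq:Bogo}, and I split the analysis into three regimes according to the behavior of $g_1(m)$ and $g_2(m)$: (i) the liquid case $\alpha>-\nicefrac{1}{2}$; (ii) the convergent case in which $g_1(m)\to g_1(\infty)\in(0,\infty)$, which holds when $\alpha<-\nicefrac{1}{2}$ and also when $\alpha=-\nicefrac{1}{2}$ with $S(-1;\ell)<\infty$; and (iii) the weak-crystalline case $\alpha=-\nicefrac{1}{2}$ with $S(-1;\ell)=+\infty$. Karamata's theorem yields in (i) $g_1(m)\sim\sigma^2 m b^2(m)/(2\alpha+1)$ and, by rewriting $g_2(m)=(m+\nicefrac{1}{2})^2\sigma^2\sum_k(1-k/(m+\nicefrac{1}{2}))^2 b^2(k)$ and approximating by the corresponding Beta integral, $g_2(m)\sim 2\sigma^2 m^3 b^2(m)/[(2\alpha+1)(2\alpha+2)(2\alpha+3)]$. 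In (ii) dominated convergence with dominant $\sigma^2\sum b^2(k)$ gives $g_2(m)/(m+\nicefrac{1}{2})^2\to g_1(\infty)$, while in (iii) the de Haan class properties of $L$ give $g_1(m)\to\infty$ and, via a similar dominated argument after splitting the $k$-sum at $\varepsilon m$, $g_2(m)/[(m+\nicefrac{1}{2})^2 g_1(m)]\to 1$.

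In the liquid regime, the integration limits $\pm(g_1(m))^{-\nicefrac{1}{2}}$ shrink to $0$, so the first term of \eqref{eq:Bogo} vanishes; inside the second term both $1/(2g_1(m))$ and $(m+\nicefrac{1}{2})^2/(2g_2(m))$ tend to $0$, so the integrand tends to $1$ and dominated convergence gives the double integral the value $\pi$. The prefactor $\frac{1}{\pi(m+\nicefrac{1}{2})}\sqrt{g_2(m)/g_1(m)}$ converges to $\frac{1}{\pi}\sqrt{2/[(2\alpha+2)(2\alpha+3)]}=\frac{1}{\pi\sqrt{(1+\alpha)(3+2\alpha)}}$, producing the claimed value. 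The weak-crystalline case (iii) is handled by the identical computation, except that now the prefactor limit is $1/\pi$, so the limit equals $1$.

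In the remaining regime (ii), setting $a:=1/\sqrt{g_1(\infty)}$, the two terms of \eqref{eq:Bogo} converge separately to
\[
I_1(a):=\frac{1}{\sqrt{2\pi}}\int_{-a}^{a}e^{-y^2/2}\,dy,\qquad I_2(a):=\frac{1}{\pi}\int_0^1\!\!\int_0^\pi\!\exp\!\Bigl(\!-\tfrac{a^2}{2}\bigl(\cos^2\phi+\tfrac{\sin^2\phi}{x^2}\bigr)\!\Bigr)d\phi\,dx,
\]
and it remains to verify the identity $I_1(a)+I_2(a)=1$ for every $a>0$. The plan is to perform the change of variables $u=a\cos\phi$, $v=a\sin\phi/x$ (Jacobian $v^2$), which maps $(\phi,x)\in[0,\pi]\times(0,1]$ bijectively onto $D:=\{(u,v):|u|\le a,\ v\ge\sqrt{a^2-u^2}\}$ and transforms $I_2(a)$ into $\frac{1}{\pi}\iint_D v^{-2}e^{-(u^2+v^2)/2}\,du\,dv$. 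Integration by parts in $v$, using $\int v^{-2}e^{-v^2/2}\,dv=-v^{-1}e^{-v^2/2}-\int e^{-v^2/2}\,dv$, produces a boundary contribution $\frac{e^{-a^2/2}}{\pi}\int_{-a}^a (a^2-u^2)^{-\nicefrac{1}{2}}\,du=e^{-a^2/2}$ and a remainder $-\frac{1}{\pi}\iint_D e^{-(u^2+v^2)/2}\,du\,dv$. Converting the remainder to polar coordinates $(u,v)=(r\cos\phi,r\sin\phi)$, the radial integration on $r\in[a,a/|\cos\phi|]$ evaluates to $e^{-a^2/2}-e^{-a^2/(2\cos^2\phi)}$, and the angular integral $\int_0^\pi e^{-a^2/(2\cos^2\phi)}\,d\phi$ equals $\pi(1-I_1(a))$ since it is $\pi$ times $\mathbb{P}(|X|>a)$ for $X\sim N(0,1)$ (expressed in polar coordinates). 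Assembling these pieces yields $I_2(a)=1-I_1(a)$, as required.

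The main obstacle is the identity $I_1(a)+I_2(a)=1$; the remainder of the argument is a routine application of Karamata's theorem and dominated convergence, with some care needed to handle the slow-variation case in regime (iii).
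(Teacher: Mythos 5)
Your proposal is correct and reaches the same trichotomy (liquid, convergent/strong crystalline, weak crystalline) as the paper, with the same limiting analysis of the BBL formula driven by the asymptotics of $g_1(m)$ and $g_2(m)$ (Karamata for $\alpha>-\nicefrac{1}{2}$, dominated convergence when $\sum b^2(k)<\infty$, the slow variation of $L$ when $\alpha=-\nicefrac{1}{2}$, $S(-1)=\infty$). Where you genuinely diverge from the paper is in establishing the key identity $I_1(a)+I_2(a)=1$. The paper proves this through Lemma~\ref{lem:integrals} in three steps: first it evaluates the inner $x$-integral $\int_0^1\ee^{-u^2 x^{-2}\sin^2\phi}{\rm d}x$ by the substitution $y=1/x$ and an integration by parts (yielding an $\erfc$ term), then it proves a separate angular-integral identity (again by passing to a double integral and polar coordinates), and finally multiplies and integrates to obtain~\eqref{lem:integrals_eq3}. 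You instead treat $I_2(a)$ directly as a double integral, perform the global change of variables $(u,v)=(a\cos\phi,\,a\sin\phi/x)$ with Jacobian $v^2$ onto the region $\{|u|\le a,\ v\ge\sqrt{a^2-u^2}\}$, then do a single integration by parts in $v$ followed by polar coordinates on the remainder; the angular integral $\frac{1}{\pi}\int_0^\pi\ee^{-a^2/(2\cos^2\theta)}{\rm d}\theta$ is then recognized as $\mathbb{P}(|X|>a)$ for a standard Gaussian $X$. Both routes are valid; yours is more self-contained and avoids the detour through $\erfc$ identities, while the paper's factors the argument into reusable one-variable integral formulas (which it then also uses for the finer rate-of-convergence estimates~\eqref{eq:Bogomo}--\eqref{eq:a_frac}). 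I also note (for your own record) that the paper's display~\eqref{eq:e_nu_m_1} writes $S(2\alpha)$ where it should be $\sigma^2 S(2\alpha)$, a harmless typo since the identity makes the sum equal $1$ regardless; your parameterization via $a=1/\sqrt{g_1(\infty)}$ sidesteps this entirely.
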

\begin{proof}

Under the assumption $b(x)=x^{\alpha}\ell(x)$ the asymptotics of the term $\frac{1}{m}\left(\frac{g_2(m)}{g_1(m)}\right)^{\nicefrac{1}{2}}$ is calculated in Lemma~\ref{lem:si_polys_mean_asymp} in the Appendix. In particular, if $\alpha <  -\nicefrac{1}{2}$, or if $\alpha =  -\nicefrac{1}{2}$ and
\begin{align*}
S(2\alpha)=S(2\alpha;\ell)=\sum_{k\geq 1}k^{2\alpha}\ell^2(k)=\sum_{k\geq 1}b^2(k)=\sigma^{-2}\lim_{m\to\infty}g_1(m)<\infty,
\end{align*}
the aforementioned lemma entails
\begin{align}\label{eq:g2tog1}
\lim_{m\to\infty}\frac{1}{m}\left(\frac{g_2(m)}{g_1(m)}\right)^{\nicefrac{1}{2}}=1,
\end{align}
and
\begin{align*}
\lim_{m\to\infty}\frac{g_2(m)}{m^2}=S(2\alpha).
\end{align*}
Therefore, in this case
\begin{align}\label{eq:e_nu_m_1}
\lim_{m\to\infty}\mathbb{E}[\nu_m/(2m+1)] &= 
\erf \left(1/\sqrt{2S(2\alpha)} \right)  +\frac{1}{\pi}\int_0^1\int_0^{\pi}\ee^{-\frac{\cos^2 \phi}{2S(2\alpha)}-\frac{\sin^2 \phi}{2x^2S(2\alpha)}}{\rm d}\phi{\rm d}x.
\end{align}
The repeated integral above is equal to the complementary error function $\erfc\left({1}/{\sqrt{2S(2\alpha)}}\right) $, see Lemma~\ref{lem:integrals} in the Appendix, so that the two terms on the right add up to 1. If $\alpha=-\nicefrac{1}{2}$ and $\lim_{m\to\infty}g_1(m)=\infty$, equation~\eqref{eq:g2tog1} still holds but now $\lim_{m\to\infty}\frac{g_2(m)}{m^2}=+\infty$. Passing to the limit $m\to\infty$ in \eqref{eq:Bogo}, one obtains $\lim_{m\to\infty}\mathbb{E}[\nu_m/(2m+1)] = 1$. This proves part (i).

If $\alpha>-\nicefrac{1}{2}$, then necessarily $\lim_{m\to\infty}g_1(m)=\infty$ and also
\begin{equation}\label{eq:g_2_grows_fast}
\frac{g_2(m)}{m^2}\geq \frac{\sigma^2}{m^2}\sum_{1\leq k\leq m/2}\left(\frac{2m+1}{2}-k\right)^2 b^2(k)\geq \frac{\sigma^2}{m^2} \left(\frac{m+1}{2}\right)^2\sum_{1\leq k\leq m/2}b^2(k)\to+\infty,\quad m\to\infty.
\end{equation}
Lemma~\ref{lem:si_polys_mean_asymp}(i) yields
\begin{equation}\label{eq:e_nu_m_2}
\lim_{m\to\infty}\mathbb{E}[\nu_m/(2m+1)] = \frac{1}{\sqrt{(1+\alpha)(3+2\alpha)}}.
\end{equation}
This proves part (ii).
\end{proof}

\smallskip

The limit of $\mathbb{E}[\nu_m/(2m+1)]$ as $m\to\infty$ was previously found in~\cite{Bogomolny+Bohigas+Leboeuf:1996} for the particular case when $b(x)=x^{\alpha}$ and $\alpha \geq -\nicefrac{1}{2}$ along with the finite-$m$ corrections to the limit in the case when $b(x)=x^{-\nicefrac{1}{2}}$ which turned out to be of order $1/\log m$. To obtain the speed of convergence of $\mathbb{E}[\nu_m/(2m+1)]$ to the limit in the general case of Gaussian coefficients with regularly varying variance profile, it is useful to recast identity~\eqref{eq:Bogo} in a slightly different form. By introducing
\begin{align}\label{eq:u_mv_m_def}
u_m=\frac{1}{\sqrt{2g_1(m)}}, \quad v_m=\frac{m+\nicefrac{1}{2}}{\sqrt{2g_2(m)}},
\end{align}
and making use of the integrals
\begin{align}\label{eq:aux_int3}
\int_0^1 \ee^{ - \frac{1}{x^2} v_m^2\sin^2\phi}{\rm d}x =
\ee^{-v_m^2 \sin^2 \phi }-\sqrt{\pi } v_m \erfc (v_m \sin \phi ) \sin \phi
\end{align}
and
\begin{align}\label{eq:aux_int2}
\frac{1}{\pi} \int_0^{\pi}  \ee^{-u_m^2\cos^2\phi-v_m^2\sin^2 \phi} {\rm d}\phi = \ee^{-\frac{1}{2} (u_m^2+v_m^2)} I_0((u_m^2-v_m^2)/2 ),
\end{align}\label{eq:aux_int1}
identity \eqref{eq:Bogo} transforms to
\begin{multline}\label{eq:Bogomo}
\mathbb{E}[\nu_m/(2m+1)]=\erf (u_m) + \frac{u_m}{v_m}\, \ee^{-\frac{1}{2} (u_m^2+v_m^2)} I_0 ((u_m^2-v_m^2)/2 ) \\
- \frac{u_m}{\sqrt{\pi}} \int_0^{\pi} \ee^{-u_m^2\cos^2\phi} \erfc \big(v_m \sin \phi \big) \sin \phi \, \d \phi.
\end{multline}
The integral in \eqref{eq:aux_int3} is evaluated in Lemma \ref{lem:integrals} and the integral in \eqref{eq:aux_int2} is derived from the standard integral representation for the modified Bessel function $I_0$ of zero order using also that this function is even.

Identity \eqref{eq:Bogomo} is convenient for asymptotic analysis. We restrict ourselves to the most interesting case when $\alpha \le -\nicefrac{1}{2}$. In this case $v_m\sim u_m$ in the limit $m\to\infty$, and to leading order the integral in \eqref{eq:Bogomo} is given by (see Lemma \ref{lem:integrals})
\begin{align}\label{eq:integral_evaluation}
\frac{u_m}{\sqrt{\pi}} \int_0^{\pi} \ee^{-u_m^2\cos^2\phi} \erfc \big(u_m \sin \phi \big) \sin \phi \, \d \phi = e^{-u_m^2}-\erfc(u_m),
\end{align}
so that the tree terms on the right in  \eqref{eq:Bogomo} to leading order add up to 1.

To obtain sub-leading orders, observe that if $\alpha \le -\nicefrac{1}{2}$, then Lemma~\ref{lem:si_polys_mean_asymp} and elementary manipulations yield
\begin{align}\label{eq:u/v}
{v_m}/{u_m}=1+\epsilon_{m,\alpha} +o(\epsilon_{m,\alpha}), \quad m\to\infty,
\end{align}
where
\begin{align} \label{eq:eps_m}
\epsilon_{m,\alpha} = \begin{cases}
\,\,\displaystyle{\frac{3}{4}\frac{\ell^2(m)}{\sum_{k=1}^{m}\frac{\ell^2(k)}{k}}} ,
& \quad \text{if } \alpha =-\nicefrac{1}{2};\\[3ex]
\,\, \displaystyle{
\frac{(2+\alpha)}{2(1+\alpha)(3+2\alpha)S(2\alpha)}\,m^{1+2\alpha}\ell^2(m)},
& \quad \text{if } -1< \alpha < -\nicefrac{1}{2};\\[2ex]
\,\,\displaystyle{\frac{1}{S(2\alpha)} \frac{\sum_{k=1}^{m}k^{1+2\alpha}\, \ell^2 (k) }{m} } ,
&\quad \text{if } \alpha\leq -1 .
\end{cases}
\end{align}
Obviously, $\epsilon_{m,\alpha}$ is positive and $\epsilon_{m,\alpha} \to 0$, as $m\to\infty$.
\smallskip

To proceed with the calculation of $\mathbb{E}[\nu_m/(2m+1)]$, we expand  $\erfc(v_m  \sin \phi)= \erfc((1+\delta_m) \,u_m \sin \phi)$  in powers of $\delta_m=\epsilon_{m,\alpha}(1+o(1))$,
\begin{multline}\label{eq:T_expansion}
\erfc(u_m(1+\delta_m) \sin \phi)= \\
\erfc(u_m \sin \phi )- \frac{2}{\sqrt{\pi}}\,  (u_m \sin \phi)\, \ee^{-(u_m \sin  \phi )^2}\,  \delta_m
+ R_1(u_m \sin \phi, \delta_m )\delta_m^2 .
\end{multline}
Since the second derivative of $\erfc(c u)$ in $u$ is $\pi^{-\nicefrac{1}{2}} 4c^3 u \ee^{-c^2 u^2}$ and $\delta_m\in [0,1]$ for all $m$ sufficiently large, it holds that
\begin{align*}
0\le R_1(u_m \sin \phi, \delta_m ) \le 4\pi^{-\nicefrac{1}{2}} (u_m \sin \phi)^3  \ee^{-u_m^2 \sin^2 \phi}\quad \text{for  $\phi\in [0,\pi]$}.
\end{align*}
Consequently, using that $u_m$ is bounded,
\begin{align*}
0\le \frac{u_m}{\sqrt{\pi}} \int_{0}^{\pi} R_1(u_m \sin \phi, \delta_m )\,   \ee^{-u_m^2\cos^2\phi}\,  \sin \phi \, \d \phi \le \frac{3}{2 } u_m^4 \ee^{-u_m^2} \le C u_m^2 \ee^{-u_m^2},
\end{align*}
for some $C>0$. Hence
\begin{multline}
\frac{u_m}{\sqrt{\pi}} \int_0^{\pi} \ee^{-u_m^2\cos^2\phi} \erfc \big(v_m \sin \phi \big) \sin \phi \, \d \phi = \\
\erf(u_m)+\ee^{-u_m^2}-1 - u_m^2\ee^{-u_m^2}\delta_m + O(u_m^2\ee^{-u_m^2}\delta_m^2)=\erf(u_m)+\ee^{-u_m^2}-1 - u_m^2\ee^{-u_m^2}\epsilon_{m,\alpha}(1+o(1)).
\end{multline}
It now remains to expand in powers of $\delta_m$ the second term on the right-hand side in~\eqref{eq:Bogomo}. Using $I_0(x)=1+o(x)$, as $x\to 0$, we infer
\begin{align*}
 \frac{u_m}{v_m}\, \ee^{-\frac{1}{2} (u_m^2+v_m^2)} I_0 ((u_m^2-v_m^2)/2 )=\ee^{-u_m^2}-\ee^{-u_m^2} (u_m^2+1) \epsilon_{m,\alpha} (1+o(1)).
\end{align*}
 Thus, finally, for $\alpha \le -\nicefrac{1}{2}$ we obtain
\begin{align}\label{eq:a_frac}
\mathbb{E}[\nu_m/(2m+1)]=
1-\ee^{-u_m^2} \epsilon_m (1+o(1)) = 1-\ee^{ -\frac{1}{2g_1(m)}} \epsilon_{m,\alpha}  (1+o(1)).
\end{align}
The proposition below summarises our findings.
\begin{prop}
Assume that $b(x)=x^{\alpha}\ell(x)$ and that the function $\ell$ is slowly varying at $+\infty$.
\begin{itemize}
\item[(i)] If $\alpha=-\nicefrac{1}{2}$, then, as $m\to\infty$,
\begin{equation}\label{eq:alpha=1/2}
1-\mathbb{E}[\nu_m/(2m+1)]~\sim~\frac{3 \ee^{-1/(2g_1(m))}}{4}\frac{ \ell^2(m)}{\sum_{k=1}^{m}k^{-1}\ell^2(k)}.
\end{equation}
\item[(ii)] If $-1<\alpha < -\nicefrac{1}{2}$, then
\begin{equation}
1-\mathbb{E}[\nu_m/(2m+1)]~\sim~\frac{\ee^{-1/(2\sigma^2 S(2\alpha))}(2+\alpha)}{2(1+\alpha)(3+2\alpha)S(2\alpha)}\,m^{1+2\alpha}\ell^2(m).
\end{equation}
\item[(iii)] If $\alpha \le  -1$,  then
\begin{equation}\label{eq:alpha<=-1}
1-\mathbb{E}[\nu_m/(2m+1)]~\sim~\frac{\ee^{-1/(2\sigma^2 S(2\alpha))}}{S(2\alpha)} \frac{\sum_{k=1}^{m}k^{1+2\alpha}\ell^2(k) }{m}.
\end{equation}
\end{itemize}
\end{prop}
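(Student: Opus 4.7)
The plan is to combine two ingredients that were put in place in the paragraphs preceding the proposition: the expansion~\eqref{eq:a_frac}, which reads
\begin{equation*}
1-\mathbb{E}[\nu_m/(2m+1)] = \ee^{-1/(2g_1(m))}\,\epsilon_{m,\alpha}\,(1+o(1)), \qquad m\to\infty,
\end{equation*}
and the explicit asymptotic forms \eqref{eq:eps_m} of $\epsilon_{m,\alpha}$ for the three ranges of $\alpha$. The proof amounts to substituting each line of \eqref{eq:eps_m} into the display above and simplifying the exponential prefactor.

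For cases (ii) and (iii), where $\alpha<-\nicefrac{1}{2}$, one has $2\alpha<-1$, hence $S(2\alpha)<\infty$ and $g_1(m)\to \sigma^2 S(2\alpha)$ as $m\to\infty$. Therefore $\ee^{-1/(2g_1(m))}\to \ee^{-1/(2\sigma^2 S(2\alpha))}$, which yields the constant prefactors in the two corresponding statements of the proposition. In the critical case (i), $\alpha=-\nicefrac{1}{2}$, the quantity $g_1(m)$ may or may not converge to a finite limit, depending on whether $S(-1;\ell)<\infty$, so no simplification of $\ee^{-1/(2g_1(m))}$ is performed and \eqref{eq:alpha=1/2} is obtained directly.

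The nontrivial content is concentrated in the two prior inputs. The first, \eqref{eq:a_frac}, is obtained by Taylor expanding each of the three terms in \eqref{eq:Bogomo} in powers of $\delta_m=v_m/u_m-1$, using the bound on the second derivative of $\erfc$ to control the remainder \eqref{eq:T_expansion} together with the expansion $I_0(x)=1+o(x)$ near the origin; the point here is the asymptotic cancellation between the three contributions at the leading order, which leaves the factor $\delta_m\sim\epsilon_{m,\alpha}$ at the first subleading order. The second input, \eqref{eq:eps_m}, rests on Lemma~\ref{lem:si_polys_mean_asymp} applied to the formulas \eqref{eq:g_1_2_def}; the convenient algebraic observation is the identity
\begin{equation*}
(m+\tfrac{1}{2})^2 g_1(m) - g_2(m) = \sigma^2 \sum_{k=1}^m k(2m+1-k)\,b^2(k),
\end{equation*}
whose asymptotic evaluation, combined with that of $g_2(m)$, produces the three regimes. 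The main subtlety lies in case (i), where $\alpha=-\nicefrac{1}{2}$ forces careful tracking of slowly varying factors through both numerator and denominator to produce the logarithmic normalization $\sum_{k=1}^m k^{-1}\ell^2(k)$; the other two regimes follow by direct application of Karamata's theorem.
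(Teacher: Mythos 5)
Your proposal is correct and coincides with the paper's own proof: the paper simply says "The assertion of the Proposition follows from~\eqref{eq:a_frac}," and you carry out precisely the intended substitution of \eqref{eq:eps_m} into \eqref{eq:a_frac}, together with the observation that $g_1(m)\to\sigma^2 S(2\alpha)$ when $\alpha<-\nicefrac{1}{2}$. The additional remarks you make about the provenance of \eqref{eq:a_frac} and \eqref{eq:eps_m} accurately summarize the surrounding text and are not a departure from the paper's route.
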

\begin{proof}
The assertion of the Proposition follows from~\eqref{eq:a_frac}.
\end{proof}

\begin{ex}
For $\alpha=-\nicefrac{1}{2}$ and $\ell^2(x)=\log^{\beta} x$, $\beta>-1$, we obtain
$$
\mathbb{E}[\nu_m/(2m+1)] =
1- \frac{3(\beta+1)}{4\log m}\, (1+ o( 1) ).
$$
For $\beta=0$ this result has been derived in~\cite{Bogomolny+Bohigas+Leboeuf:1996}, see Eq.~(3.24) therein. If $\alpha=-\nicefrac{1}{2}$ and $\beta<-1$, then
$$
\mathbb{E}[\nu_m/(2m+1)] =
1- \frac{3\ee^{-1/(2\sigma^2 S(-1))}}{4S(-1)}(\log^\beta m)(1+o(1)).
$$
Finally, if $\alpha=-\nicefrac{1}{2}$ and $\beta=-1$, then
$$
\mathbb{E}[\nu_m/(2m+1)] =
1- \frac{3}{4\log m} \frac{1}{ \log \log m}(1+o(1)).
$$
\end{ex}

\begin{rem}
If $\alpha=-\nicefrac{1}{2}$ and $S(-1)=+\infty$, then the exponential in \eqref{eq:alpha=1/2} can be replaced by $1$. In this case the rate of convergence $\mathbb{E}[\nu_m/(2m+1)]$ to $1$ does not depend on $\sigma^2=\mathbb{E}[|\xi|^2]$. Depending on the choice of the slowly varying function the rate can be very slow. For example, if $b(x)= x^{-1/2} \ee^{\log^{\gamma} x}$ with $\gamma\in (0,1)$, then
\begin{equation*}
1-\mathbb{E}[\nu_m/(2m+1)]~\sim~\frac{3}{4}\frac{ 2\gamma}{\log^{1-\gamma} m}, \quad m\to \infty.
\end{equation*}
At the other critical point, if $S(1+2\alpha)=\sum_{k=1}^{\infty}k^{1+2\alpha}\ell^2(k)<\infty$, this sum on the right in~\eqref{eq:alpha<=-1} can be replaced by $S(1+2\alpha)$. In particular, this is possible whenever $\alpha<-1$ and we find that
\begin{equation*}
1-\mathbb{E}[\nu_m/(2m+1)]~\sim~\frac{\ee^{-{1}/{(2\sigma^2 S(2\alpha))}} \, S(1+2\alpha)}{S(2\alpha)} \frac{1 }{m},\quad m\to\infty.
\end{equation*}
Observe that  the rate of convergence of $\mathbb{E}[\nu_m/(2m+1)]$ to $1$, for fixed $\sigma>0$, cannot be faster than $O(m^{-1})$.

\end{rem}

The average number of zeros of $K_m$ on the arc $A(\phi):=\{z\in\mathbb{D}_1\,:\,0\leq \arg(z)\leq \phi\}$ is given by the function
$$
\phi\mapsto \mathbb{E}[(\zeros_{\mathbb{C}}(K_m))(A(\phi))],\quad \phi\in [0,2\pi].
$$
In case of isotropic Gaussian coefficients the above function is differentiable with respect to $\phi$ and its derivative $p_m(\phi)$, equal to the intensity of the point process of zeros of $K_m$ on the unit circle, was calculated in~\cite[Eq. (A.16)]{Bogomolny+Bohigas+Leboeuf:1996},
\begin{align*}
\frac{p_m(\phi) }{m+\nicefrac{1}{2}}=
\frac{u_m\ee^{-u_m^2\cos^2 ((m+\nicefrac{1}{2}) \phi)}}
{\sqrt{\pi}} \left\{ \left| \sin ((m+\nicefrac{1}{2}) \phi) \right| +\frac{1}{\sqrt{\pi} v_m} \int_0^{1} \ee^{-\frac{1}{x^2} v_m^2\sin^{2}((m+\nicefrac{1}{2}) \phi) }\, \d x\right\},\;\; \phi \in [0,2\pi].
\end{align*}
The integral on the right is evaluated in~\eqref{lem:integrals_eq1}. Using it,
\begin{equation}\label{eq:expected_density}
\frac{p_m(\phi) }{2m+1}=
\frac{1}{2\pi}
\frac{u_m}{v_m}
 \ee^{
 -u_m^2\cos^2\phi_m -v_m^2\sin^2\phi_m
 }
  +
\ee^{-u_m^2\cos^2\phi_m}\frac{u_m}{2\sqrt{\pi}}\left(|\sin\phi_m|\erf ( v_m  |\sin\phi_m|)\right)
\end{equation}
where $\phi_m=(m+\nicefrac{1}{2})\phi$. For fixed $m$,  $p_m(\phi)$ is periodic with period $\pi/(m+\nicefrac{1}{2})$.
\begin{theorem}
Assume that $b(x)=x^{\alpha}\ell(x)$ and that the function $\ell$ is slowly varying at $+\infty$.
\begin{itemize}
\item[(i)] If $\alpha >-\nicefrac{1}{2}$, or $\alpha =-\nicefrac{1}{2}$ and $S(-1)=+\infty$, then
\begin{equation*}
\lim_{m\to\infty} \frac{p_m(\phi) }{(2m+1)} = \frac{1}{2\pi} \frac{1}{\sqrt{(1+\alpha)(3+2\alpha)}} , \quad \text{for every } \phi \in[0,2\pi];
\end{equation*}
\item[(ii)] If $\alpha < -\nicefrac{1}{2}$, or $\alpha = -\nicefrac{1}{2}$ and $S(-1) < +\infty$,
then the intensities $p_m(\phi)/(2m+1)$ do not converge but
\begin{equation}\label{eq:expected_counting_measure_to_lebesgue}
\lim_{m\to\infty}\frac{\mathbb{E}[(\zeros_{\mathbb{C}}(K_m))(A(x))]}{2m+1}=\frac{x}{2\pi},\quad 0\leq x\leq 2\pi.
\end{equation}
That is, the expected normalized counting measure of the zeros on the unit circle converges weakly to the uniform measure on $[0,2\pi]$.
\end{itemize}

\end{theorem}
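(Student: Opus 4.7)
My starting point is the closed form \eqref{eq:expected_density} for $p_m(\phi)/(2m+1)$ together with the $m\to\infty$ asymptotics of the quantities $u_m=(2g_1(m))^{-\nicefrac{1}{2}}$ and $v_m=(m+\nicefrac{1}{2})(2g_2(m))^{-\nicefrac{1}{2}}$ recorded in Lemma~\ref{lem:si_polys_mean_asymp}. The dichotomy between (i) and (ii) corresponds exactly to whether $g_1(m)\to\infty$ (so $u_m,v_m\to 0$) or $g_1(m)$ converges to the finite limit $\sigma^2 S(2\alpha)$ (so $u_m,v_m$ tend to a common positive constant).

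For part~(i), Lemma~\ref{lem:si_polys_mean_asymp} and a direct Karamata computation (splitting into the subcases $\alpha>-\nicefrac{1}{2}$ and $\alpha=-\nicefrac{1}{2}$ with $S(-1)=+\infty$) give
\[
\frac{u_m}{v_m}=\frac{\sqrt{g_2(m)}}{(m+\nicefrac{1}{2})\sqrt{g_1(m)}}\longrightarrow \frac{1}{\sqrt{(1+\alpha)(3+2\alpha)}},
\]
together with $u_m\to 0$, $v_m\to 0$, and $u_mv_m\to 0$. Substituting into \eqref{eq:expected_density}, the first term tends to $\tfrac{1}{2\pi}((1+\alpha)(3+2\alpha))^{-\nicefrac{1}{2}}$ uniformly in $\phi$ because the exponential factor tends to $1$ uniformly, while the second term is uniformly dominated by $(u_mv_m/\pi)\sin^2((m+\nicefrac{1}{2})\phi)\le u_mv_m/\pi\to 0$ thanks to $|\erf(x)|\le 2|x|/\sqrt{\pi}$. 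This finishes (i).

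For part~(ii), Lemma~\ref{lem:si_polys_mean_asymp} yields $g_1(m)\to\sigma^2 S(2\alpha)$ and $g_2(m)/(m+\nicefrac{1}{2})^2\to\sigma^2 S(2\alpha)$, so both $u_m,v_m$ converge to the common positive constant $u_\infty:=(2\sigma^2 S(2\alpha))^{-\nicefrac{1}{2}}$. Writing $\phi_m=(m+\nicefrac{1}{2})\phi$, the right-hand side of \eqref{eq:expected_density} becomes $f_m(\phi_m)$ with
\[
f_m(\theta)=\frac{1}{2\pi}\frac{u_m}{v_m}\ee^{-u_m^2\cos^2\theta-v_m^2\sin^2\theta}+\ee^{-u_m^2\cos^2\theta}\frac{u_m}{2\sqrt{\pi}}|\sin\theta|\erf(v_m|\sin\theta|),
\]
which is $\pi$-periodic in $\theta$ and converges uniformly on $\mathbb{R}$ to a continuous $\pi$-periodic limit $f$. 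In particular, $p_m(\phi)/(2m+1)$ itself oscillates at frequency $m+\nicefrac{1}{2}$ and has no pointwise limit. To deduce \eqref{eq:expected_counting_measure_to_lebesgue} I would change variables $\theta=(m+\nicefrac{1}{2})\phi$ and split the resulting integral over $[0,(m+\nicefrac{1}{2})x]$ into $\lfloor(m+\nicefrac{1}{2})x/\pi\rfloor$ complete periods of $f_m$ plus a remainder of length less than $\pi$, obtaining
\[
\int_0^x\frac{p_m(\phi)}{2m+1}\,\d\phi=\frac{1}{m+\nicefrac{1}{2}}\int_0^{(m+\nicefrac{1}{2})x}f_m(\theta)\,\d\theta\longrightarrow\frac{x}{\pi}\int_0^\pi f(\theta)\,\d\theta.
\]
The normalization $\int_0^\pi f(\theta)\,\d\theta=\nicefrac{1}{2}$ is then forced by the total-mass identity
\[
2\int_0^\pi f_m(\theta)\,\d\theta=\int_0^{2\pi}\frac{p_m(\phi)}{2m+1}\,\d\phi=\mathbb{E}[\nu_m/(2m+1)]\longrightarrow 1,
\]
the last limit being part~(i) of the preceding theorem, which applies because $\alpha\le -\nicefrac{1}{2}$. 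Hence the limit in \eqref{eq:expected_counting_measure_to_lebesgue} equals $x/(2\pi)$, as required.

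The principal obstacle is the oscillation of $p_m(\phi)/(2m+1)$ in case (ii); once one recognises that the natural object is not the intensity but its spatial integral and invokes the uniform convergence of $f_m$ on one period, the normalization of the limiting density comes for free from the already-proved asymptotic $\mathbb{E}[\nu_m/(2m+1)]\to 1$, bypassing any direct evaluation of $\int_0^\pi f$.
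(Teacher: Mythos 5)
Your argument is correct and closely parallels the paper's, with one genuine (though small) shortcut in part~(ii). For part~(i) your reasoning — that $u_m,v_m\to 0$, $u_m/v_m\to ((1+\alpha)(3+2\alpha))^{-\nicefrac{1}{2}}$ via Lemma~\ref{lem:si_polys_mean_asymp}, the exponential prefactor tends to $1$ uniformly, and the second term in~\eqref{eq:expected_density} is bounded by $u_m v_m/\pi$ via $|\erf(x)|\le 2|x|/\sqrt{\pi}$ — is exactly the paper's route. For part~(ii), both proofs change variable $\theta=(m+\nicefrac{1}{2})\phi$ and exploit the $\pi$-periodicity of the integrand to reduce $\int_0^x p_m/(2m+1)\,\d\phi$ to $(x/\pi)\int_0^\pi f(\theta)\,\d\theta$. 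The difference is how the constant $\int_0^\pi f=\nicefrac{1}{2}$ is obtained: the paper evaluates the limiting integral explicitly by invoking the identity~\eqref{lem:integrals_eq2} from Lemma~\ref{lem:integrals}, whereas you infer it from the total-mass constraint $2\int_0^\pi f_m=\int_0^{2\pi}p_m/(2m+1)\,\d\phi=\mathbb{E}[\nu_m/(2m+1)]\to 1$, which is the preceding theorem (applicable here since $\alpha\le-\nicefrac{1}{2}$). Your route avoids a fresh appeal to the integral identities, which is a tidy economy, but it leans on the previously established total-mass limit as an external input; the paper's direct evaluation keeps the proof self-contained within this theorem. Both are valid, and the structural content is the same.
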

\begin{proof}
If $\alpha> -\nicefrac{1}{2}$, or $\alpha = -\nicefrac{1}{2}$ and $S(-1)=+\infty$, then $\lim_{m\to\infty}u_m=\lim_{m\to\infty}v_m=0$ and Lemma~\ref{lem:si_polys_mean_asymp} (parts (i-ii)) and~\eqref{eq:expected_density} imply that
$$
\lim_{m\to\infty} \frac{p_m(\phi) }{2m+1} = \frac{1}{2\pi} \frac{1}{\sqrt{(1+\alpha)(3+2\alpha)}}, \quad \phi \in[0,2\pi].
$$
If $\alpha<-\nicefrac{1}{2}$, or $\alpha=-\nicefrac{1}{2}$ and $S(-1)<\infty$, then
\begin{equation}\label{eq:u_m_v_m_limit}
\lim_{m\to\infty}u_m=\lim_{m\to\infty}v_m=(2\sigma^{2}S(2\alpha))^{-1/2}=:C\in (0,+\infty).
\end{equation}
Therefore, the right-hand side of~\eqref{eq:expected_density} does not converge due to the rapid oscillations of $\sin \phi_m$. The convergence~\eqref{eq:expected_counting_measure_to_lebesgue} follows upon integrating~\eqref{eq:expected_density} over $[0,x]$ and changing the variable $t=(m+\nicefrac{1}{2})\phi$:
\begin{align*}
\int_{0}^{x}\frac{p_m(\phi) }{2m+1}{\rm d}\phi&=
\frac{1}{2\pi}
\frac{u_m}{v_m}
 \int_0^x \ee^{
 -u_m^2\cos^2\phi_m -v_m^2\sin^2\phi_m
 }{\rm d}\phi\\
 &\hspace{2cm}+\frac{u_m}{2\sqrt{\pi}}\int_0^x
\ee^{-u_m^2\cos^2\phi_m}\erf ( v_m  |\sin\phi_m|)|\sin\phi_m|{\rm d}\phi\\
&=\frac{1}{2\pi(m+\nicefrac{1}{2})}
\frac{u_m}{v_m}
 \int_0^{(m+\nicefrac{1}{2})x} \ee^{
 -u_m^2\cos^2 t -v_m^2\sin^2 t
 }{\rm d}t\\
 &\hspace{2cm}+\frac{u_m}{2\sqrt{\pi}(m+\nicefrac{1}{2})}\int_0^{(m+\nicefrac{1}{2})x}
\ee^{-u_m^2\cos^2 t}\erf ( v_m  |\sin t|)|\sin t|{\rm d}t.
\end{align*}
Using that the integrands are $\pi$-periodic functions we conclude that, as $m\to\infty$,
$$
\int_{0}^{x}\frac{p_m(\phi) }{2m+1}{\rm d}\phi=\frac{x}{2\pi^2}
\frac{u_m}{v_m}
 \int_0^{\pi} \ee^{
 -u_m^2\cos^2 t -v_m^2\sin^2 t
 }{\rm d}t+\frac{x u_m}{2\pi^{3/2}}\int_0^{\pi}
\ee^{-u_m^2\cos^2 t}\erf ( v_m \sin t )\sin t{\rm d}t+o(1).
$$
Applying~\eqref{eq:u_m_v_m_limit} we obtain
$$
\int_{0}^{x}\frac{p_m(\phi) }{2m+1}{\rm d}\phi=\frac{x}{2\pi^2}
 \int_0^{\pi} \ee^{-C^2}{\rm d}t+\frac{Cx}{2\pi^{3/2}}\int_0^{\pi}
\ee^{-C^2\cos^2 t}\erf ( C\sin t )\sin t{\rm d}t+o(1).
$$
From~\eqref{lem:integrals_eq2} after elementary manipulations we finally deduce~\eqref{eq:expected_counting_measure_to_lebesgue}.
\end{proof}

\appendix
\section{Auxiliary results}
\begin{prop}\label{prop:riemann_lebesgue}
Let $f:[a,b]\to\mathbb{C}$ be a continuous function, $-\infty<a<b<+\infty$, and $\psi\in\mathbb{R}$. Then
$$
\lim_{n\to\infty}\frac{1}{n}\sum_{k=\lfloor na\rfloor}^{\lfloor nb\rfloor-1}f(k/n)\ee^{\ii \psi k}=
\begin{cases}
\int_a^{b}f(x){\rm d}x,& \psi\in 2\pi\mathbb{Z},\\
0,&\psi\not\in 2\pi\mathbb{Z}.
\end{cases}.
$$
\end{prop}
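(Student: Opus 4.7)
I would split the argument into the two cases according to whether $\psi\in 2\pi\mathbb{Z}$ or not.

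The case $\psi\in 2\pi\mathbb{Z}$ is routine: here $\ee^{\ii\psi k}=1$ for every $k\in\mathbb{Z}$, so the expression reduces to the ordinary Riemann sum $n^{-1}\sum_{k=\lfloor na\rfloor}^{\lfloor nb\rfloor-1}f(k/n)$, which converges to $\int_a^b f(x)\,\d x$ by Riemann integrability of the continuous function $f$ on $[a,b]$, with the endpoint discrepancy between $\lfloor na\rfloor/n,\,\lfloor nb\rfloor/n$ and $a,b$ contributing an $O(\|f\|_\infty/n)$ error.

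The case $\psi\notin 2\pi\mathbb{Z}$ is the substantive one. The key ingredient is the uniform bound on partial geometric sums: for any integers $K\le L$,
$$
\left|\sum_{k=K}^{L-1}\ee^{\ii \psi k}\right|=\left|\frac{\ee^{\ii \psi L}-\ee^{\ii \psi K}}{\ee^{\ii \psi}-1}\right|\le \frac{2}{|\ee^{\ii \psi}-1|}=:C_\psi<\infty.
$$
Fix $\varepsilon>0$. Using the uniform continuity of $f$ on $[a,b]$, I would choose a partition $a=x_0<x_1<\cdots<x_m=b$ with $|f(x)-f(x_j)|<\varepsilon$ on each $[x_j,x_{j+1}]$, and decompose
\begin{align*}
\frac{1}{n}\sum_{k=\lfloor na\rfloor}^{\lfloor nb\rfloor-1}f(k/n)\ee^{\ii \psi k}
&=\frac{1}{n}\sum_{j=0}^{m-1} f(x_j)\sum_{k=\lfloor nx_j\rfloor}^{\lfloor nx_{j+1}\rfloor-1}\ee^{\ii \psi k}\\
&\qquad+\frac{1}{n}\sum_{j=0}^{m-1}\sum_{k=\lfloor nx_j\rfloor}^{\lfloor nx_{j+1}\rfloor-1}\bigl(f(k/n)-f(x_j)\bigr)\ee^{\ii \psi k}+o(1),
\end{align*}
where the $o(1)$ accounts for the two endpoint blocks $[\lfloor na\rfloor,\lfloor nx_0\rfloor)$ and $[\lfloor nx_m\rfloor,\lfloor nb\rfloor)$. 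The first term on the right-hand side is bounded in modulus by $m\|f\|_\infty C_\psi/n$ and vanishes as $n\to\infty$; the second has modulus at most $\varepsilon(b-a)+O(1/n)$. Letting $n\to\infty$ and then $\varepsilon\to 0+$ completes the proof.

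I do not anticipate a real obstacle here: the argument is the standard ``Riemann sum with oscillating phase'' scheme combining Abel-type control of geometric partial sums with uniform continuity. The only mild technicality is the bookkeeping around the floor functions and the boundary indices, which is absorbed into the $O(1/n)$ error terms above.
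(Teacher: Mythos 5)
Your proof is correct, and it takes a genuinely different route from the paper's. You use the classical Abel/Weyl blocking scheme: partition $[a,b]$ finely via uniform continuity of $f$, freeze $f$ on each block at $f(x_j)$, use the uniform bound $|\sum_{k=K}^{L-1}\ee^{\ii\psi k}|\le C_\psi$ to kill the frozen part (its contribution is $O(m/n)$), and bound the oscillation error by $\varepsilon(b-a)$. The paper instead recasts the sum as $\mathbb{E}[f(\lfloor nU\rfloor/n)\ee^{\ii\psi\lfloor nU\rfloor}]$ with $U$ uniform on $[0,1]$, approximates $f(\lfloor nU\rfloor/n)$ by $f(U)$ via dominated convergence, and then applies a shift-by-$1/n$ substitution to the resulting integral $\int_0^1 f(x)\ee^{\ii\psi\lfloor nx\rfloor}\,\d x$; this produces an identity of the form $(1-\ee^{\ii\psi})\int_0^1 f(x)\ee^{\ii\psi\lfloor nx\rfloor}\,\d x = o(1)$, from which the limit is zero since $1-\ee^{\ii\psi}\ne 0$. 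Your version is the more elementary textbook argument with a bit more index bookkeeping (note that since $x_0=a$ and $x_m=b$, the ``endpoint blocks'' you mention are actually empty, so that $o(1)$ correction is not needed); the paper's version is slicker and avoids the partition entirely, trading it for a change-of-variables trick reminiscent of the standard Riemann--Lebesgue lemma proof. Both are complete and valid.
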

\begin{proof}
The case $\psi\in 2\pi\mathbb{Z}$ is trivial. Assume that $\psi\not\in 2\pi\mathbb{Z}$ and, without loss of generality, $a=0$ and $b=1$. Let $U$ be a random variable with the uniform distribution on $[0,1]$. Then
$$
\frac{1}{n}\sum_{k=0}^{n-1}f(k/n)\ee^{\ii \psi k}=\mathbb{E}\left[f\left(\frac{\lfloor nU\rfloor}{n}\right)\ee^{\ii \psi \lfloor nU\rfloor}\right]=\mathbb{E}\left[\left(f\left(\frac{\lfloor nU\rfloor}{n}\right)-f(U)\right)\ee^{\ii \psi \lfloor nU\rfloor}\right]+\mathbb{E}\left[f(U)\ee^{\ii \psi \lfloor nU\rfloor}\right].
$$
By the dominated convergence theorem and continuity of $f$,
$$
\lim_{n\to\infty}\mathbb{E}\left[\left(f\left(\frac{\lfloor nU\rfloor}{n}\right)-f(U)\right)\ee^{\ii \psi \lfloor nU\rfloor}\right]=0.
$$
Now observe that the change of variables $x=y+1/n$ implies
\begin{align*}
\mathbb{E}\left[f(U)\ee^{\ii \psi \lfloor nU\rfloor}\right]&=\int_0^{1}f(x)\ee^{\ii \psi \lfloor nx\rfloor}{\rm d}x=\ee^{\ii \psi}\int_{-1/n}^{1-1/n}f(y+1/n)\ee^{\ii \psi \lfloor ny\rfloor}{\rm d}y\\
&=\ee^{\ii \psi}\int_{0}^{1-1/n}f(y+1/n)\ee^{\ii \psi \lfloor ny\rfloor}{\rm d}y+O(n^{-1})\\
&=\ee^{\ii \psi}\int_{0}^{1-1/n}(f(y+1/n)-f(y))\ee^{\ii \psi \lfloor ny\rfloor}{\rm d}y+\ee^{\ii \psi}\int_{0}^{1}f(y)\ee^{\ii \psi \lfloor ny\rfloor}{\rm d}y+O(n^{-1}).
\end{align*}
Rearranging, we obtain
$$
(1-\ee^{\ii\psi})\int_0^{1}f(x)\ee^{\ii \psi \lfloor nx\rfloor}{\rm d}x=\ee^{\ii \psi}\int_{0}^{1-1/n}(f(y+1/n)-f(y))\ee^{\ii \psi \lfloor ny\rfloor}{\rm d}y+O(n^{-1}).
$$
The left-hand side converges to $0$ as $n\to\infty$ by the Lebesgue dominated convergence theorem and continuity of $f$. Since $\psi\not\in 2\pi\mathbb{Z}$ implies $(1-\ee^{\ii\psi})\neq 0$, the proof is complete.
\end{proof}

\begin{lemma}\label{lem:W_-1}
Assume that $b$ is regularly varying with index $\alpha\leq -\nicefrac{1}{2}$ and, if $\alpha=-\nicefrac{1}{2}$, also that~\eqref{eq:convergence_b^2} holds. The sequence $a_n=-W_{-1}(-nb^2(n))$ is well-defined for all sufficiently large $n\in\mathbb{N}$ and satisfies:
\begin{enumerate}
\item[(i)] $\lim_{n\to\infty }a_n\to+\infty$;
\item[(ii)] $a_n=O(\log n)$;
\item[(iii)] if $\alpha<-\nicefrac{1}{2}$, $a_n=-2\log b(n)-\log n + \log \log n + \log (-2\alpha-1) + o(1)$, as $n\to\infty$.
\end{enumerate}
\end{lemma}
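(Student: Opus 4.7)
The plan is to derive all three claims from the classical two-term asymptotic of the Lambert $W_{-1}$-function near~$0$, which we use in the form
\begin{equation*}
W_{-1}(x) = \log(-x) - \log(-\log(-x)) + o(1),\qquad x\to 0-,
\end{equation*}
as stated in~\eqref{eq:W_-1_asymp}. Before invoking this, I first verify that $-nb^2(n)$ lies eventually in the domain $[-\ee^{-1},0)$ of $W_{-1}$. For $\alpha<-\nicefrac{1}{2}$ this is immediate from $nb^2(n)=n^{1+2\alpha}\ell^2(n)$ together with Potter's bound $\ell(n)=O(n^\varepsilon)$ applied with small $\varepsilon$. For $\alpha=-\nicefrac{1}{2}$ under~\eqref{eq:convergence_b^2} we have $nb^2(n)=\ell^2(n)$, and a short block-sum argument using slow variation shows that $\sum_k\ell^2(k)/k<\infty$ forces $\ell(n)\to 0$. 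In either regime $-nb^2(n)\to 0-$, so the expansion above applies and immediately yields (i) together with the master formula
\begin{equation}\label{eq:plan_master}
a_n = \log\bigl(1/(nb^2(n))\bigr) + \log\log\bigl(1/(nb^2(n))\bigr) + o(1).
\end{equation}

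For part (iii), I would specialise~\eqref{eq:plan_master} to $\alpha<-\nicefrac{1}{2}$ by substituting
\begin{equation*}
\log\bigl(1/(nb^2(n))\bigr) = -(1+2\alpha)\log n - 2\log\ell(n) = -\log n - 2\log b(n),
\end{equation*}
factoring $(-2\alpha-1)\log n$ out of the double-log term, and using the standard fact $\log\ell(n)=o(\log n)$ for slowly varying $\ell$ to conclude that
\begin{equation*}
\log\log\bigl(1/(nb^2(n))\bigr) = \log(-2\alpha-1) + \log\log n + \log\!\Bigl(1-\tfrac{2\log\ell(n)}{(-2\alpha-1)\log n}\Bigr) = \log(-2\alpha-1)+\log\log n+o(1).
\end{equation*}
Summing the two contributions produces exactly the asymptotic claimed in (iii).

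For part (ii), I would read off $a_n=O(\log n)$ directly from (iii) when $\alpha<-\nicefrac{1}{2}$ (both $\log b(n)$ and $\log n$ are $O(\log n)$). When $\alpha=-\nicefrac{1}{2}$, formula~\eqref{eq:plan_master} specialises to $a_n=-2\log\ell(n)+\log(-2\log\ell(n))+o(1)$, and slow variation of $\ell$ gives $|\log\ell(n)|=o(\log n)$, whence $a_n=o(\log n)$ and in particular $a_n=O(\log n)$.

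The whole argument is essentially a routine deployment of the $W_{-1}$-expansion together with standard properties of slowly varying functions. The only mildly delicate step I anticipate is the justification that $\ell(n)\to 0$ in the boundary case $\alpha=-\nicefrac{1}{2}$, $S(-1;\ell)<\infty$; this I would handle by partitioning the summation range into dyadic blocks on which $\ell$ is asymptotically constant and using that the block-contribution $\sim\ell^2(2^k)\log 2$ of $\sum_{k}\ell^2(k)/k$ must tend to zero. No genuine obstacle is expected beyond this.
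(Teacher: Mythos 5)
Your proof is correct and follows essentially the same route as the paper: both arguments hinge on verifying $nb^2(n)\to 0$ so that the two-term asymptotic of $W_{-1}$ near $0$ applies, and then read off (i)–(iii) by substituting $nb^2(n)=n^{1+2\alpha}\ell^2(n)$. The only cosmetic difference is in the preliminary step: the paper handles both regimes at once via the doubling inequality $\sum_{k=n+1}^{2n}b^2(k)\ge nb^2(n)\inf_{1\le\lambda\le 2}b^2(\lambda n)/b^2(n)$ combined with the uniform convergence theorem, whereas you split into $\alpha<-\nicefrac{1}{2}$ (Potter, or simply that $x\mapsto xb^2(x)$ is regularly varying with negative index) and $\alpha=-\nicefrac{1}{2}$ (dyadic block-sums), which are equivalent arguments.
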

\begin{proof}
The regular variation of $b$ and $\sum_{k\geq 0}b^2(k)<\infty$ imply that $nb^2(n)\to 0$, as $n\to\infty$. Indeed,
$$
0\leftarrow \sum_{k=n+1}^{2n}b^2(k)\geq n\inf_{n<k\leq 2n}b^2(k)\geq nb^2(n)\inf_{1\leq \lambda\leq 2}\frac{b^2(\lambda n)}{b^2(n)}.
$$
By the uniform convergence theorem for regularly varying functions, the last infimum converges to $\inf_{1\leq \lambda \leq 2} \lambda^{2\alpha}=2^{2\alpha}$. Thus, the sequence $a_n$ is well-defined for all $n\in\mathbb{N}$ such that $nb^2(n)<\ee^{-1}$. All three claims follow now from the asymptotic expansion
\begin{equation}\label{eq:W_-1_asymp}
W_{-1}(x)=-\log(-1/x)-\log\log(-1/x)-\frac{\log \log(-1/x)}{\log(-1/x)}+O\left(\left(\frac{\log \log(-1/x)}{\log(-1/x)}\right)^2\right),\quad x\to 0-,
\end{equation}
in conjunction with the regular variation of the function $x\mapsto 1/(xb^2(x))$.

If $\alpha<-\nicefrac{1}{2}$, then $-\log (n b^2(n))~\sim~(-2\alpha-1)\log n$. Therefore,~\eqref{eq:W_-1_asymp} implies
\begin{multline*}
a_n=-\log (nb^2(n))+\log \log (1/(nb^2(n)))+o(1)\\
=-2\log b(n)-\log n +\log \log n+\log (-2\alpha-1)+o(1).
\end{multline*}
\end{proof}

\begin{lemma}\label{lem:variance}
With probability one the process $(\widetilde{\mathfrak{G}}(t))_{t\in\mathbb{R}}$ defined by~\eqref{eq:s_tilde_integral_rep} does not have multiple real zeros.
\end{lemma}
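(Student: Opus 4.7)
The statement fits naturally into the framework of Bulinskaya's lemma (see, e.g., Aza\"{i}s--Wschebor, \emph{Level Sets and Extrema of Random Processes and Fields}, Prop.~1.20): any $C^1$ centered Gaussian process $X$ on $\mathbb{R}$ for which the distribution of the pair $(X(t), X'(t))$ is a non-degenerate Gaussian vector at every $t$ almost surely has no multiple zeros. Smoothness of $\widetilde{\mathfrak{G}}$ is immediate because $\mathfrak{G}$ is a Gaussian entire function, so $t \mapsto \widetilde{\mathfrak{G}}(t) = 2\Re(\ee^{-\ii t}G_0(\ii t))$ is real analytic. Thus the only non-trivial task is to verify that the $2\times 2$ covariance matrix of $(\widetilde{\mathfrak{G}}(t),\widetilde{\mathfrak{G}}'(t))$ is strictly positive definite for every $t\in\mathbb{R}$.

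Differentiating the integral representation~\eqref{eq:s_tilde_integral_rep} inside the Wiener integrals gives
\begin{equation*}
\widetilde{\mathfrak{G}}'(t) = -2\sigma_1\int_0^1 x^\alpha(x-1)\sin(t(x-1))\,{\rm d}B_1(x) - 2\sigma_2\int_0^1 x^\alpha(x-1)\cos(t(x-1))\,{\rm d}B_2(x).
\end{equation*}
Suppose that $a\widetilde{\mathfrak{G}}(t) + b\widetilde{\mathfrak{G}}'(t) = 0$ a.s. for some real $(a,b)\neq(0,0)$. The independence of $B_1$ and $B_2$, together with the It\^{o} isometry, forces the two $L^2([0,1], {\rm d}x)$-integrands to vanish identically:
\begin{equation*}
\sigma_1 x^\alpha\bigl[a\cos(t(x-1)) - b(x-1)\sin(t(x-1))\bigr] = 0, \quad \sigma_2 x^\alpha\bigl[a\sin(t(x-1)) + b(x-1)\cos(t(x-1))\bigr] = 0
\end{equation*}
for a.e.\ $x \in (0,1)$. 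When $\sigma_1\sigma_2 > 0$ these constitute, at each $x \in (0,1)$, a $2\times 2$ linear system in $(a,b)$ whose determinant equals $(x-1) \neq 0$, so $a = b = 0$, a contradiction. If exactly one of $\sigma_1, \sigma_2$ vanishes, only a single one of these identities survives, but the real-analyticity (in $x$) of the bracketed expression already forces $a = b = 0$ for every $t \neq 0$: evaluating at $x = 1$ kills $a$, and on any open subset of $(0,1)$ the residual identity $b(x-1)\sin(t(x-1)) \equiv 0$ (or its sine/cosine counterpart) kills $b$. The exceptional point $t = 0$ can be checked by hand: then either $\widetilde{\mathfrak{G}}(0)$ or $\widetilde{\mathfrak{G}}'(0)$ is a centered Gaussian with strictly positive variance, hence a.s.\ non-zero, while the other is identically zero, so $t=0$ is almost surely a simple zero or not a zero at all.

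Once non-degeneracy has been verified, Bulinskaya's lemma applied on each compact interval $[-T,T]$ and a countable union over $T\in\mathbb{N}$ yields the claim. The main (and only genuine) obstacle is the non-degeneracy computation; the rest is routine. The argument relies on viewing $(\widetilde{\mathfrak{G}}(t), \widetilde{\mathfrak{G}}'(t))$ as the image of white noise under a linear map and showing that the two deterministic kernels $x \mapsto x^\alpha \cos(t(x-1))$ and $x \mapsto x^\alpha(x-1)\sin(t(x-1))$ (and their $B_2$-counterparts) are linearly independent in $L^2([0,1],{\rm d}x)$; the linear-algebraic identity $(x-1)\neq 0$ encapsulates this in a single line.
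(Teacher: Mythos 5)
Your proof is correct in its essential structure and takes a genuinely different route from the paper. The paper invokes the one-dimensional form of Bulinskaya's lemma (as in Lemma~4.3 of Iksanov--Kabluchko--Marynych~2016): for a $C^1$ centered Gaussian process, uniform boundedness of the density of $\widetilde{\mathfrak{G}}(t)$ — equivalently, $\Var[\widetilde{\mathfrak{G}}(t)]$ bounded away from zero — already precludes multiple zeros. The paper then directly bounds the variance below by elementary estimates in each of the cases $\sigma_1\sigma_2>0$, $\sigma_2=0$, $\sigma_1=0$, handling $t=0$ separately in the last case. You instead use the Aza\"{i}s--Wschebor formulation, which requires the two-dimensional non-degeneracy of the pair $(\widetilde{\mathfrak{G}}(t),\widetilde{\mathfrak{G}}'(t))$, and verify it via the linear independence of the Wiener kernels. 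The paper's route is cheaper (a one-dimensional condition, no need to differentiate the Wiener integral), while your determinant identity $\det=(x-1)$ is slicker in the non-degenerate case $\sigma_1\sigma_2>0$.

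There is, however, a small gap in your treatment of the degenerate case $\sigma_1=0$. You assert that ``evaluating at $x=1$ kills $a$'' and then the residual identity kills $b$, attributing this to both degenerate cases by the parenthetical ``(or its sine/cosine counterpart).'' This works for $\sigma_2=0$, where the surviving kernel is $a\cos(t(x-1))-b(x-1)\sin(t(x-1))$, whose value at $x=1$ is exactly $a$. But when $\sigma_1=0$, the surviving kernel is $a\sin(t(x-1))+b(x-1)\cos(t(x-1))$, which vanishes identically at $x=1$ regardless of $(a,b)$, so evaluation there gives no information. The fix is easy but genuinely needed: for instance, a Taylor expansion in $(x-1)$ yields $(at+b)(x-1)+O\bigl((x-1)^3\bigr)$, whose leading coefficient forces $at+b=0$, and the $(x-1)^3$ coefficient then gives (for $t\neq0$) $at+3b=0$, hence $a=b=0$; alternatively, linear independence of $\sin(t(x-1))$ and $(x-1)\cos(t(x-1))$ on any subinterval for $t\neq0$ does the job. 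The $t=0$ exceptional point is handled correctly in your write-up, and the compactness argument for the uniform lower bound on the minimal eigenvalue (needed in the Aza\"{i}s--Wschebor version) is fine once one excises a neighborhood of $t=0$ in the degenerate cases.
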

\begin{proof}
According to a result by E.~V.~Bulinskaya~\cite{Bulinskaya:1961}, see also Lemma 4.3 in~\cite{Iksanov+Kabluchko+Marynych:2016}, it suffices to verify that the density of $\widetilde{\mathfrak{G}}(t)$ is  bounded uniformly in $t\in\mathbb{R}$. Since the process $\widetilde{\mathfrak{G}}$ is centered Gaussian this amounts to checking that the variance ${\rm Var}\,[\widetilde{\mathfrak{G}}(t)]$ is bounded away from zero. Observe that~\eqref{eq:s_tilde_integral_rep} yields
$$
{\rm Var}\,[\widetilde{\mathfrak{G}}(t)]=4\sigma_1^2\int_0^1 x^{2\alpha}\cos^2(t(x-1)){\rm d}x+4\sigma_2^2\int_0^1 x^{2\alpha}\sin^2(t(x-1)){\rm d}x,\quad t\in\mathbb{R}.
$$
If $c:=\min(\sigma_1^2,\sigma_2^2)>0$, then
$$
{\rm Var}\,[\widetilde{\mathfrak{G}}(t)]\geq 4c\int_0^1 x^{2\alpha}{\rm d}x=\frac{4c}{1+2\alpha}>0,\quad t\in\mathbb{R}.
$$
If $\sigma_2^2=0$ and $\sigma_1^2>0$, then
$$
{\rm Var}\,[\widetilde{\mathfrak{G}}(t)]\geq 4\sigma_1^2\min(2^{-2\alpha},1)\int_{\nicefrac{1}{2}}^1 \cos^2(t(x-1)){\rm d}x=\sigma_1^2\min(2^{-2\alpha},1)(1+\sin(t)/t).
$$
The right-hand side is bounded away from zero. Finally, if $\sigma_1^2=0$ and $\sigma_2^2>0$,
$$
{\rm Var}\,[\widetilde{\mathfrak{G}}(t)]\geq 4\sigma_2^2\min(2^{-2\alpha},1)\int_{\nicefrac{1}{2}}^1 \sin^2(t(x-1)){\rm d}x=\sigma_2^2\min(2^{-2\alpha},1)(1-\sin(t)/t).
$$
The right-hand side is bounded away from zero for all $|t|>a$, where $a>0$ is any fixed positive constant. Thus, Bulinskaya's lemma implies that there are no multiple zeros on $\mathbb{R}\setminus\{0\}$. Although, if  $\sigma_1^2=0$ and $\sigma_2^2>0$, then $\widetilde{\mathfrak{G}}(0)=0$, but we have
$$
\widetilde{\mathfrak{G}}^{\prime}(0)=2\sigma_2\int_0^1 x^{\alpha}(1-x){\rm d} B_2(x),
$$
which is a centered Gaussian random variable with variance $4\sigma_2^2 \int_0^1 x^{2\alpha}(1-x)^2{\rm d}x>0$. Thus, $\mathbb{P}\{\widetilde{\mathfrak{G}}^{\prime}(0)=0\}=0$. The proof is complete.
\end{proof}

\begin{lemma}\label{lem:si_polys_mean_asymp}
Assume that $b^2(x)=x^{2\alpha}\ell^2(x)$. Let $g_1$ and $g_2$ be defined by~\eqref{eq:g_1_2_def}.
\begin{itemize}
\item[(i)] If $\alpha>-\nicefrac{1}{2}$, then
$$
\lim_{m\to\infty}\frac{1}{m}\left(\frac{g_2(m)}{g_1(m)}\right)^{1/2}=\frac{1}{\sqrt{(1+\alpha)(3+2\alpha)}}.
$$
\item[(ii)] If $\alpha=-\nicefrac{1}{2}$, then
$$
1-\frac{1}{m}\left(\frac{g_2(m)}{g_1(m)}\right)^{1/2}~\sim~\frac{3}{4}\frac{\ell^2(m)}{\sum_{k=1}^{m}\frac{\ell^2(k)}{k}}~\to~0,\quad m\to\infty.
$$
\item[(iii)] If $-1<\alpha<-\nicefrac{1}{2}$, then
$$
1-\frac{1}{m}\left(\frac{g_2(m)}{g_1(m)}\right)^{1/2}~\sim~\frac{2+\alpha}{(1+\alpha)(3+2\alpha)}\frac{m^{1+2\alpha}\ell^{2}(m)}{2S(2\alpha)}~\to~0,\quad m\to\infty.
$$
\item[(iv)] If $\alpha\leq -1$, then
$$
1-\frac{1}{m}\left(\frac{g_2(m)}{g_1(m)}\right)^{1/2}~\sim~-\frac{1}{2m}+\frac{1}{S(2\alpha)}\frac{\sum_{k=1}^{m}k^{1+2\alpha}\ell^2(k)}{m}~\to~0,\quad m\to\infty.
$$
\end{itemize}
\end{lemma}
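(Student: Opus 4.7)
The plan is to reduce $g_2(m)/g_1(m)$ to a rational combination of three partial sums and then apply Karamata's theorem together with a Taylor expansion of the square root. Set
\begin{equation*}
T_j(m) := \sum_{k=1}^{m} k^{2\alpha+j}\ell^2(k),\qquad j\in\{0,1,2\}.
\end{equation*}
The identity $(m+\tfrac12-k)^2=(m+\tfrac12)^2-(2m+1)k+k^2$ yields
\begin{equation*}
\frac{g_2(m)}{g_1(m)} = (m+\tfrac12)^2 - (2m+1)\,\lambda(m) + \mu(m),\qquad \lambda(m):=\tfrac{T_1(m)}{T_0(m)},\ \mu(m):=\tfrac{T_2(m)}{T_0(m)}.
\end{equation*}
The asymptotics of each $T_j$ follow from \cite[Proposition~1.5.8]{BGT}: if $2\alpha+j>-1$ then $T_j(m)\sim m^{2\alpha+j+1}\ell^2(m)/(2\alpha+j+1)$, while if $2\alpha+j<-1$ then $T_j(m)\to S(2\alpha+j;\ell)\in(0,+\infty)$; the boundary value $2\alpha+j=-1$ is handled via the fact that the corresponding partial sum is itself slowly varying.

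After substituting and dividing by $m^2$, I would use $\sqrt{1+x}=1+x/2+O(x^2)$ to identify the leading correction to $\tfrac{1}{m}\sqrt{g_2(m)/g_1(m)}=1$ in each regime. In case (i) ($\alpha>-\nicefrac{1}{2}$) all three sums diverge, and elementary algebra via the identity $1-\tfrac{2(2\alpha+1)}{2\alpha+2}+\tfrac{2\alpha+1}{2\alpha+3}=\tfrac{1}{(1+\alpha)(3+2\alpha)}$ produces the stated constant limit. In case (iii) ($-1<\alpha<-\nicefrac{1}{2}$), $T_0\to S(2\alpha;\ell)$ while $T_1$ and $T_2$ diverge; writing $\delta(m):=m^{2\alpha+1}\ell^2(m)/S(2\alpha;\ell)$, the combination $-2\lambda(m)/m+\mu(m)/m^2$ equals $-\tfrac{(2+\alpha)\,\delta(m)}{(1+\alpha)(3+2\alpha)}(1+o(1))$, which dominates the residual $1/m$ term because $2\alpha+2>0$ forces $m\,\delta(m)\to\infty$. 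In case (iv) ($\alpha\le-1$) the key bounds $T_1(m)/m\to 0$ and $T_2(m)/m^2\to 0$ (by Karamata in every sub-case, including $\alpha=-1$) collapse the expansion to $\tfrac{1}{m}\sqrt{g_2/g_1}=1+\tfrac{1}{2m}-\lambda(m)/m+o(\cdot)$, whence $1-\tfrac{1}{m}\sqrt{g_2/g_1}\sim -\tfrac{1}{2m}+T_1(m)/(m\,S(2\alpha;\ell))$. Case (ii) ($\alpha=-\nicefrac{1}{2}$) uses Karamata for slowly varying integrands, namely $T_1(m)\sim m\ell^2(m)$ and $T_2(m)\sim m^2\ell^2(m)/2$, so that with $\varepsilon(m):=\ell^2(m)/T_0(m)$ one has $\lambda(m)\sim m\,\varepsilon(m)$ and $\mu(m)\sim m^2\varepsilon(m)/2$, and the expansion produces
\begin{equation*}
1-\tfrac{1}{m}\sqrt{g_2(m)/g_1(m)}=-\tfrac{1}{2m}+\tfrac{3}{4}\varepsilon(m)+o(\varepsilon(m))+O(m^{-2}).
\end{equation*}

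The main obstacle is case (ii): the claimed equivalence $\sim\tfrac{3}{4}\varepsilon(m)$ is valid only when $1/m=o(\varepsilon(m))$, i.e.\ $m\ell^2(m)/T_0(m)\to\infty$. Here one exploits that $T_0(m)$ is itself slowly varying (trivially when $S(-1;\ell)<\infty$, and as a member of the de Haan class $\Pi$ when $S(-1;\ell)=+\infty$, see \cite[\S1.5]{BGT}), while $m\ell^2(m)$ is regularly varying of index $1$; comparison of regular-variation indices yields $T_0(m)/(m\ell^2(m))\to 0$. A secondary care point is that in case (iv) the two contributions $-\tfrac{1}{2m}$ and $T_1(m)/(m\,S(2\alpha;\ell))$ are of comparable order, so ``$\sim$'' in the statement is to be read as equality modulo $o\!\big(1/m+T_1(m)/m\big)$ corrections, which is exactly what the Taylor calculation delivers.
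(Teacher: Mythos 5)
Your proof is correct and takes essentially the same route as the paper: expand $(m+\tfrac12-k)^2$, reduce to ratios of partial sums, apply the direct half of Karamata's theorem, and Taylor-expand the square root; the "main obstacle" you identify in case (ii) (namely $1/m = o(\ell^2(m)/T_0(m))$) is exactly the relation $\ell^2(m)/\sum_{k\le m}k^{-1}\ell^2(k)\to 0$ that the paper invokes from \cite[Eq.~(1.5.8)]{BGT}. Your side remark on case (iv) is unnecessary in fact: since $S(1+2\alpha)>S(2\alpha)$ (or $T_1(m)\to\infty$), the two terms on the right of (iv) never cancel to a smaller order, so the asymptotic equivalence "$\sim$" holds in the usual strong sense.
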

\begin{proof}
The proof follows by elementary manipulations from the direct half of Karamata's theorem, see~\cite[Proposition 1.5.8]{BGT} and the relation
\begin{equation}\label{eq:app_lemma6_eq1}
\frac{\ell^2(m)}{\sum_{k=1}^{m}\frac{\ell^2(k)}{k}}~\to~0,\quad m\to\infty,
\end{equation}
which holds for any slowly varying function $\ell$, see formula (1.5.8) in the same reference. In parts (ii)-(iv) we also used that $1-(1-x)^{1/2}~\sim x/2$, as $x\to 0$. For example, let us check (iv). Without loss of generality assume that $\sigma^2=1$. Write
\begin{equation}\label{eq:app_lemma6_eq2}
1-\frac{1}{m}\left(\frac{g_2(m)}{g_1(m)}\right)^{1/2}=1-\frac{1}{m}\left((m+\nicefrac{1}{2})^2-\frac{2m+1}{g_1(m)}\sum_{k=1}^{m}kb^2(k)+\frac{1}{g_1(m)}\sum_{k=1}^{m}k^2 b^2(k)\right)^{1/2}.
\end{equation}
Observe that
$$
\frac{1}{m}\sum_{k=1}^{m}k^2 b^2(k)=\frac{1}{m}\sum_{k=1}^{m}k^{2+2\alpha} \ell^2(k)=o\left(\sum_{k=1}^{m}k^{1+2\alpha} \ell^2(k)\right)=o\left(\sum_{k=1}^{m}k b^2(k)\right),\quad m\to\infty.
$$
Indeed, if $\alpha=-1$, this follows from~\eqref{eq:app_lemma6_eq1}, whereas if $\alpha<-1$ this follows from
$$
\lim_{m\to\infty}\frac{1}{m}\sum_{k=1}^{m}k^{2+2\alpha} \ell^2(k)=0,
$$
which is true by the Stolz-Ces\'{a}ro theorem. Thus,~\eqref{eq:app_lemma6_eq2} and $1-(1-x)^{1/2}~\sim x/2$ yields
\begin{align*}
1-\frac{1}{m}\left(\frac{g_2(m)}{g_1(m)}\right)^{1/2}&~\sim~\frac{1}{2}\left(1-(1+1/(2m))^2+\frac{2m+1}{m^2 g_1(m)}\sum_{k=1}^{m}kb^2(k)-\frac{1}{m^2 g_1(m)}\sum_{k=1}^{m}k^2 b^2(k)\right)\\
&~\sim~\frac{1}{m g_1(m)}\sum_{k=1}^{m}kb^2(k)-\frac{1}{2m}~\sim~\frac{1}{m S(2\alpha)}\sum_{k=1}^{m}k^{1+2\alpha}\ell^2(k)-\frac{1}{2m}.
\end{align*}
The proof of part (iv) is complete.
\end{proof}

\begin{lemma}\label{lem:integrals}
Assume $u>0$ and $\phi\in [0,2\pi]$. Then
\begin{align}
&\int_0^1 \ee^{-\frac{1}{x^2}u^2\sin^2 \phi} \, \d x = \ee^{-u^2\sin^2 \phi} - \sqrt{\pi}\, \erfc(u |\sin \phi|) \, u|\sin \phi|;\label{lem:integrals_eq1}\\
&\frac{1}{\sqrt{\pi}} \int_0^{\pi} \ee^{-u^2\cos^2 \phi} \erfc(u \sin \phi)  (u \sin \phi) \, \d \phi = \ee^{-u^2} - \erfc (u);\label{lem:integrals_eq2}\\
&\frac{1}{\pi}\int_0^1\int_0^{\pi}\ee^{-u^2(\cos^2 \phi +x^{-2}\sin^2 \phi)}\,  \d \phi \d x = \erfc(u).\label{lem:integrals_eq3}
\end{align}
\end{lemma}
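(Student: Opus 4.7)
The plan is to prove the three identities in the order stated, since~\eqref{lem:integrals_eq3} follows easily from the first two.

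For~\eqref{lem:integrals_eq1}, set $a:=u|\sin\phi|\ge 0$ (the identity is trivial when $a=0$). The substitution $t=a/x$ maps $x\in(0,1]$ to $t\in[a,+\infty)$ and rewrites the left-hand side as
\[
\int_0^1 \ee^{-a^2/x^2}\d x = a\int_a^{+\infty} t^{-2}\ee^{-t^2}\d t.
\]
Integration by parts with $U=\ee^{-t^2}$ and $\d V=t^{-2}\d t$ (so $V=-t^{-1}$) produces the boundary term $a^{-1}\ee^{-a^2}$ together with $-2\int_a^{+\infty}\ee^{-t^2}\d t=-\sqrt\pi\,\erfc(a)$. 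Multiplying by $a$ and substituting back $a=u|\sin\phi|$ yields the claim.

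For~\eqref{lem:integrals_eq2}, denote the left-hand side by $F(u)$ and put $G(u):=\ee^{-u^2}-\erfc(u)$. Since $F(0)=G(0)=0$, it suffices to verify $F'(u)=G'(u)$ for $u>0$. Differentiation under the integral sign (justified by smoothness and uniform integrability on compact $u$-intervals) yields
\[
F'(u)=\frac{1}{\sqrt\pi}\int_0^\pi \sin\phi(1-2u^2\cos^2\phi)\ee^{-u^2\cos^2\phi}\erfc(u\sin\phi)\d\phi - \frac{2u\ee^{-u^2}}{\pi}\int_0^\pi\sin^2\phi\,\d\phi.
\]
The crucial observation is the antiderivative identity
\[
\sin\phi\,(1-2u^2\cos^2\phi)\,\ee^{-u^2\cos^2\phi}=-\frac{d}{d\phi}\!\left[\cos\phi\,\ee^{-u^2\cos^2\phi}\right],
\]
which allows integration by parts in $\phi$ in the first integral. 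The boundary contribution produces $2\ee^{-u^2}/\sqrt\pi$ (using $\erfc(0)=1$ and $\cos\pi-\cos 0=-2$), while the remaining integral, after the Gaussian factors combine into $\ee^{-u^2}$, reduces to $\int_0^\pi\cos^2\phi\,\d\phi=\pi/2$. Collecting terms gives $F'(u)=\frac{2}{\sqrt\pi}\ee^{-u^2}-2u\ee^{-u^2}$, which matches $G'(u)=-2u\ee^{-u^2}+\frac{2}{\sqrt\pi}\ee^{-u^2}$.

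For~\eqref{lem:integrals_eq3}, apply Fubini's theorem and use~\eqref{lem:integrals_eq1} for the inner $x$-integral, observing that $|\sin\phi|=\sin\phi$ on $[0,\pi]$:
\[
\frac{1}{\pi}\int_0^\pi \ee^{-u^2\cos^2\phi}\!\left[\ee^{-u^2\sin^2\phi}-\sqrt\pi\,\erfc(u\sin\phi)\,u\sin\phi\right]\d\phi=\ee^{-u^2}-F(u),
\]
where the first bracket merges with $\ee^{-u^2\cos^2\phi}$ into $\ee^{-u^2}$. Identity~\eqref{lem:integrals_eq2} then gives the value $\erfc(u)$, as required. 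The main obstacle is the antiderivative trick in step~(ii): recognising the combination $\sin\phi(1-2u^2\cos^2\phi)\ee^{-u^2\cos^2\phi}$ as a total derivative in $\phi$ is what converts the $\erfc$-weighted integral into a boundary term plus an elementary Gaussian integral.
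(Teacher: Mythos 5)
Your proof is correct, and all three identities check out. For~\eqref{lem:integrals_eq1} you use essentially the same substitution-plus-integration-by-parts argument as the paper ($t=a/x$ versus the paper's $y=1/x$ is the same manoeuvre), and for~\eqref{lem:integrals_eq3} you multiply~\eqref{lem:integrals_eq1} by $\frac{1}{\pi}\ee^{-u^2\cos^2\phi}$ and integrate, exactly as the paper does.

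The genuine difference is in~\eqref{lem:integrals_eq2}. The paper substitutes $x=\cos\phi$, unfolds $\erfc$ as an integral to obtain a planar Gaussian double integral over the region $D=\{(x,y):y\geq\sqrt{1-x^2},\,x\in[0,1]\}$, and evaluates it by decomposing $D$ and passing to polar coordinates. You instead set up a first-order ODE: show $F(0)=G(0)=0$ and verify $F'=G'$ by differentiating under the integral sign, then recognising $\sin\phi\,(1-2u^2\cos^2\phi)\,\ee^{-u^2\cos^2\phi}=-\frac{d}{d\phi}\bigl[\cos\phi\,\ee^{-u^2\cos^2\phi}\bigr]$ and integrating by parts in $\phi$ so that the $\erfc$ factor is differentiated and the Gaussian factors collapse to $\ee^{-u^2}$. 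I verified your algebra: the boundary term contributes $\frac{2}{\sqrt\pi}\ee^{-u^2}$ to $F'$, the post-integration-by-parts term contributes $-u\ee^{-u^2}$, and the term coming from differentiating $\erfc$ in $u$ contributes another $-u\ee^{-u^2}$, yielding $F'(u)=\frac{2}{\sqrt\pi}\ee^{-u^2}-2u\ee^{-u^2}=G'(u)$. The paper's route is slicker once one sees the geometric picture; yours is more mechanical and avoids the two-dimensional change of variables, at the cost of having to spot the total-derivative identity. Both are valid and roughly comparable in length.
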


\begin{proof}
Setting $q:=u|\sin \phi|$, changing variable $y=1/x$ and integrating by parts, we obtain
\begin{align*}
\int_0^1 \ee^{-q^2/x^2} \, \d x = \int_1^{+\infty} \ee^{-q^2 y^2}\, \frac{\d y}{y^2} = \ee^{-q^2} - 2 \int_1^{+\infty} q^2 \ee^{-q^2 y^2}\, \d y = \ee^{-q^2} -\sqrt{\pi} q \erfc (q).
\end{align*}
This proves equality~\eqref{lem:integrals_eq1}.

The substitution $x=\cos \phi$ transforms the integral in~\eqref{lem:integrals_eq2} into
\begin{align*}
\frac{2u}{\sqrt{\pi}} \int_0^{1} \ee^{-u^2 x^2} \erfc(u \sqrt{1-x^2}) \, \d x =\frac{4u^2}{\pi} \iint_{D} \ee^{-u^2 (x^2+y^2)}  \, \d x\d y,
\end{align*}
where the integral on the right is over the domain $D=\{ (x,y): \,  y \geq \sqrt{1-x^2}, \, x\in [0,1]
\}$. Using that $D$ is the difference of $D_c=\{ (x,y): \,  x^2+y^2 \geq 1,  x\geq 0, y\geq 0 \}$ and $\{ (x,y): \, x\geq 1, y\geq 0 \}$, one can easily evaluate the double integral using the polar coordinates:
\begin{multline*}
\frac{4u^2}{\pi} \iint_{D}\!\! \ee^{-u^2 (x^2+y^2)}  \, \d x\d y \\= \frac{4u^2}{\pi} \iint_{D_c} \!\! \ee^{-u^2 (x^2+y^2)}  \, \d x\d y - \frac{4u^2}{\pi} \int_1^{+\infty} \!\!\!\!\ee^{-u^2 x^2} \int_0^{+\infty} \!\!\!\! \ee^{-u^2 y^2}  \, \d y \, \d x
 = \ee^{-u^2} - \erfc(u).
\end{multline*}
This proves equality~\eqref{lem:integrals_eq2}.

Equality~\eqref{lem:integrals_eq3} follows upon multiplying both sides of~\eqref{lem:integrals_eq1} by $\frac{1}{\pi}\ee^{-u^2 \cos^2 \phi}$ and integrating over $\phi\in [0,\pi]$ using~\eqref{lem:integrals_eq2}.
\end{proof}

\section{The characteristic polynomial of a random Haar unitary matrix}

The following result appears on p.~2245 of~\cite{Chhaibi+Madaule+Najnudel:2018}. For completeness, we provide a relatively straightforward proof that relies on a well-known theorem of Diaconis and Shahshahani~\cite{Diaconis+Shahshahani:1994}.

\begin{prop}\label{prop:char_poly_unitary}
Let $U_n$ be a random Haar unitary matrix and ${\bf Id}_n$ the identity matrix, both of dimension $n$. Then
$$
(\log {\rm det}\,({\bf Id}_n-z U_n))_{z\in \mathbb{D}_1}~\Longrightarrow~(P^{{\rm crit}}_{\infty}(z))_{z\in \mathbb{D}_1},\quad n\to\infty,
$$
on the space $\mathcal{A}(\mathbb{D}_1)$, where
$P^{{\rm crit}}_{\infty}(z)=\sum_{k\geq 1}\frac{\xi_k}{\sqrt{k}}z^k$ and $(\xi_k)_{k\geq 1}$ are i.i.d.\ standard complex normal.
\end{prop}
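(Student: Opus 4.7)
The starting point is the classical power series identity for the characteristic polynomial: since all eigenvalues $\lambda_1, \ldots, \lambda_n$ of $U_n$ lie on the unit circle, for $|z|<1$,
\begin{equation*}
\log\det(\mathbf{Id}_n - z U_n) = \sum_{j=1}^n \log(1-z\lambda_j) = -\sum_{k=1}^{\infty} \frac{z^k}{k}\,\mathrm{Tr}(U_n^k),
\end{equation*}
where we fix, say, the principal branch of $\log$, and where the series converges absolutely and locally uniformly in $\mathbb{D}_1$ since $|\mathrm{Tr}(U_n^k)|\le n$. In view of the fact that $-\xi_k \overset{{\rm d}}{=} \xi_k$ for standard complex normal $\xi_k$, it suffices to show that, on $\mathcal{A}(\mathbb{D}_1)$,
\begin{equation*}
\left(\sum_{k\ge 1} \frac{z^k}{k}\,\mathrm{Tr}(U_n^k)\right)_{z\in\mathbb{D}_1}
~\Longrightarrow~
\left(\sum_{k\ge 1} \frac{z^k}{\sqrt{k}}\,\xi_k\right)_{z\in\mathbb{D}_1}, \quad n\to\infty.
\end{equation*}

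The plan is the familiar ``finite sums plus uniform tail'' argument. First, I would appeal to the theorem of Diaconis and Shahshahani~\cite{Diaconis+Shahshahani:1994}: for every fixed $K\in\mathbb{N}$, the random vector $(\mathrm{Tr}(U_n^k))_{k=1}^K$ converges in distribution, as $n\to\infty$, to a vector of independent centered complex normals with $\mathrm{Tr}(U_n^k)\overset{{\rm d}}{\to}\sqrt{k}\,\xi_k$. Since the map $(a_1,\ldots,a_K)\mapsto \sum_{k=1}^{K} \frac{z^k}{k} a_k$ is continuous from $\mathbb{C}^K$ to $\mathcal{A}(\mathbb{D}_1)$, this yields the convergence of the $K$-truncations
\begin{equation*}
\sum_{k=1}^{K}\frac{z^k}{k}\,\mathrm{Tr}(U_n^k)~\Longrightarrow~\sum_{k=1}^{K}\frac{z^k}{\sqrt{k}}\,\xi_k,\quad n\to\infty,
\end{equation*}
on $\mathcal{A}(\mathbb{D}_1)$. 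Second, I would control the tail uniformly in $n$ using the classical second-moment identity $\mathbb{E}[|\mathrm{Tr}(U_n^k)|^2]=\min(k,n)\le k$ (which goes back to~\cite{Diaconis+Shahshahani:1994}). Fix any $r\in(0,1)$; then by Cauchy--Schwarz,
\begin{equation*}
\mathbb{E}\sup_{|z|\le r}\left|\sum_{k>K}\frac{z^k}{k}\,\mathrm{Tr}(U_n^k)\right|\le \sum_{k>K}\frac{r^k}{k}\,\mathbb{E}|\mathrm{Tr}(U_n^k)|\le \sum_{k>K}\frac{r^k}{\sqrt{k}}\xrightarrow[K\to\infty]{}0,
\end{equation*}
uniformly in $n$. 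The analogous estimate for the limit $\sum_{k>K}\frac{z^k}{\sqrt{k}}\xi_k$ is even simpler. Combining the convergence of truncations with these uniform tail bounds via a standard triangular argument (e.g.~\cite[Theorem 3.2]{Billingsley}) delivers the required convergence on $\mathcal{A}(\mathbb{D}_1)$.

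There is no serious obstacle here: the only thing to be careful about is verifying that the tail estimate genuinely dominates on compact subsets of $\mathbb{D}_1$ in the topology of $\mathcal{A}(\mathbb{D}_1)$, which is why I phrase the estimate with $\sup_{|z|\le r}$ for an arbitrary $r<1$; equivalently, one bounds the $j$-th seminorm $\sup_{z\in K_j}|\cdot|$ in the definition of the metric on $\mathcal{A}(\mathbb{D}_1)$. Everything else reduces to the input from~\cite{Diaconis+Shahshahani:1994} and the elementary bound $\mathbb{E}[|\mathrm{Tr}(U_n^k)|^2]\le k$.
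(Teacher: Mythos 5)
Your proof is correct and follows essentially the same route as the paper: expand $\log\det(\mathbf{Id}_n-zU_n)$ as $-\sum_{k\ge1}\frac{z^k}{k}\mathrm{Tr}(U_n^k)$, invoke Diaconis--Shahshahani for the convergence of the $K$-truncations, control the tail uniformly in $n$ via the second-moment identity $\mathbb{E}[|\mathrm{Tr}(U_n^k)|^2]\le k$, and combine with Billingsley's Theorem~3.2. The only cosmetic differences are that you absorb the overall sign by noting $-\xi_k\overset{\rm d}{=}\xi_k$, and you spell out the Cauchy--Schwarz step in the tail bound slightly more explicitly than the paper does.
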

\begin{proof}
Fix $M\in\mathbb{N}$ and write
$$
\log {\rm det}\,({\bf Id}_n-z U_n)=-\sum_{k\geq 1}\frac{z^k}{k}{\rm Tr}(U_n^k)=-\sum_{k=1}^{M}\frac{z^k}{k}{\rm Tr}\,(U_n^k)-\sum_{k>M}\frac{z^k}{k}{\rm Tr}\,(U_n^k),\quad z\in\mathbb{D}_1.
$$
According to~\cite[Theorem 1]{Diaconis+Shahshahani:1994}
$$
\left(\sum_{k=1}^{M}\frac{z^k}{k}{\rm Tr}\,(U_n^k)\right)_{z\in\mathbb{D}_1}~\Longrightarrow~\left(\sum_{k=1}^{M}\frac{z^k}{\sqrt{k}}\xi_k\right)_{z\in\mathbb{D}_1},\quad n\to\infty
$$
on the space $\mathcal{A}(\mathbb{D}_1)$. It is clear that the right-hand side converges a.s. on $\mathcal{A}(\mathbb{D}_1)$ to $P^{{\rm crit}}_{\infty}$. According to~\cite[Theorem 3.2]{Billingsley} it remains to check that, for every $\varepsilon>0$ and a compact set $K\subseteq
\mathbb{D}_1$,
$$
\lim_{m\to\infty}\limsup_{n\to\infty}\mathbb{P}\left\{\sup_{z\in K}\left|\sum_{k>M}\frac{z^k}{k}{\rm Tr}\,(U_n^k)\right|>\varepsilon\right\}=0.
$$
By using Markov's inequality this follows from the fact that $\rho:=\sup_{z\in K}|z|<1$ and
$$
\mathbb{E}[|{{\rm Tr}\,(U_n^k)}|^2]=k,\quad n\geq 2,
$$
which is a consequence of~\cite[Theorem 2]{Diaconis+Shahshahani:1994} applied with $a_1=\cdots=a_{k-1}=b_1=\cdots=b_{k-1}=0$ and $a_k=b_k=1$. The proof is complete.
\end{proof}

\bibliographystyle{plain}
\bibliography{Biblio}

@article{Do+Nguyen:2025,
author = {Yen Q. Do and Nhan D. V. Nguyen},
title = {{Real roots of random polynomials: Asymptotics of the variance}},
volume = {30},
journal = {Electronic Journal of Probability},
number = {none},
publisher = {Institute of Mathematical Statistics and Bernoulli Society},
pages = {1 -- 37},
keywords = {central limit theorem, random polynomials, real roots, Universality, variance},
year = {2025},
doi = {10.1214/25-EJP1375},
URL = {https://doi.org/10.1214/25-EJP1375}
}

@article {Shiffman+Zelditch:2003,
    AUTHOR = {Shiffman, Bernard and Zelditch, Steve},
     TITLE = {Equilibrium distribution of zeros of random polynomials},
   JOURNAL = {Int. Math. Res. Not.},
  FJOURNAL = {International Mathematics Research Notices},
      YEAR = {2003},
    NUMBER = {1},
     PAGES = {25--49},
      ISSN = {1073-7928,1687-0247},
   MRCLASS = {60G99 (30C10 30C15)},
  MRNUMBER = {1935565},
MRREVIEWER = {Kambiz\ Farahmand},
       DOI = {10.1155/S1073792803206073},
       URL = {https://doi-org.ezproxy.library.qmul.ac.uk/10.1155/S1073792803206073},
}

@article {Bleher+Ridzal:2002,
    AUTHOR = {Bleher, Pavel and Ridzal, Denis},
     TITLE = {{${\rm SU}(1,1)$} random polynomials},
   JOURNAL = {J. Statist. Phys.},
  FJOURNAL = {Journal of Statistical Physics},
    VOLUME = {106},
      YEAR = {2002},
    NUMBER = {1-2},
     PAGES = {147--171},
      ISSN = {0022-4715,1572-9613},
   MRCLASS = {82B44 (60G99 81Q50)},
  MRNUMBER = {1881723},
MRREVIEWER = {Aernout\ C. D. van Enter},
       DOI = {10.1023/A:1013124213145},
       URL = {https://doi-org.ezproxy.library.qmul.ac.uk/10.1023/A:1013124213145},
}

@article{Fyodorov+Bouchaud:2008,
doi = {10.1088/1751-8113/41/37/372001},
url = {https://doi.org/10.1088/1751-8113/41/37/372001},
year = {2008},
month = {aug},
publisher = {},
volume = {41},
number = {37},
pages = {372001},
author = {Fyodorov, Yan V and Bouchaud, Jean-Philippe},
title = {Freezing and extreme-value statistics in a random energy model with logarithmically correlated potential},
journal = {Journal of Physics A: Mathematical and Theoretical},
abstract = {We investigate some implications of the freezing scenario proposed by Carpentier and Le Doussal (CLD) for a random energy model (REM) with logarithmically correlated random potential. We introduce a particular (circular) variant of the model, and show that the integer moments of the partition function in the high-temperature phase are given by the well-known Dyson Coulomb gas integrals. The CLD freezing scenario allows one to use those moments for extracting the distribution of the free energy in both high- and low-temperature phases. In particular, it yields the full distribution of the minimal value in the potential sequence. This provides an explicit new class of extreme-value statistics for strongly correlated variables, manifestly different from the standard Gumbel class.}
}

@misc{Kab+Klim:GREM:2014,
      title={Generalized Random Energy Model at Complex Temperatures},
      author={Zakhar Kabluchko and Anton Klimovsky},
      year={2014},
      eprint={1402.2142},
      archivePrefix={arXiv},
      primaryClass={math.PR},
      note={ArXiV Preprint: https://arxiv.org/abs/1402.2142},
}

@article {Michelen:2021,
    AUTHOR = {Michelen, Marcus},
     TITLE = {Real roots near the unit circle of random polynomials},
   JOURNAL = {Trans. Amer. Math. Soc.},
  FJOURNAL = {Transactions of the American Mathematical Society},
    VOLUME = {374},
      YEAR = {2021},
    NUMBER = {6},
     PAGES = {4359--4374},
   MRCLASS = {60G15 (26C10 30C15 42A32 60F05)},
  MRNUMBER = {4251232},
       DOI = {10.1090/tran/8379},
       URL = {https://doi.org/10.1090/tran/8379},
}

@article {Dembo+Mukherjee:2015,
    AUTHOR = {Dembo, Amir and Mukherjee, Sumit},
     TITLE = {No zero-crossings for random polynomials and the heat
              equation},
   JOURNAL = {Ann. Probab.},
  FJOURNAL = {The Annals of Probability},
    VOLUME = {43},
      YEAR = {2015},
    NUMBER = {1},
     PAGES = {85--118},
      ISSN = {0091-1798},
   MRCLASS = {60G15 (26A12 26C10 35K05)},
  MRNUMBER = {3298469},
MRREVIEWER = {Frank Aurzada},
       DOI = {10.1214/13-AOP852},
       URL = {https://doi.org/10.1214/13-AOP852},
}

@incollection {Ibragimov+Zaporozhets:2013,
    AUTHOR = {Ibragimov, Ildar and Zaporozhets, Dmitry},
     TITLE = {On distribution of zeros of random polynomials in complex
              plane},
 BOOKTITLE = {Prokhorov and contemporary probability theory},
    SERIES = {Springer Proc. Math. Stat.},
    VOLUME = {33},
     PAGES = {303--323},
 PUBLISHER = {Springer, Heidelberg},
      YEAR = {2013},
   MRCLASS = {60G99 (30C15 60F15)},
  MRNUMBER = {3070481},
MRREVIEWER = {Zakhar Kabluchko},
       DOI = {10.1007/978-3-642-33549-5\_18},
       URL = {https://doi.org/10.1007/978-3-642-33549-5_18},
}

@article {Peres+Virag:2005,
    AUTHOR = {Peres, Yuval and Vir\'{a}g, B\'{a}lint},
     TITLE = {Zeros of the i.i.d. {G}aussian power series: a conformally
              invariant determinantal process},
   JOURNAL = {Acta Math.},
  FJOURNAL = {Acta Mathematica},
    VOLUME = {194},
      YEAR = {2005},
    NUMBER = {1},
     PAGES = {1--35},
      ISSN = {0001-5962},
   MRCLASS = {60G99 (60G15)},
  MRNUMBER = {2231337},
MRREVIEWER = {Tomohiro Sasamoto},
       DOI = {10.1007/BF02392515},
       URL = {https://doi-org.ezproxy.library.qmul.ac.uk/10.1007/BF02392515},
}

@article{Tao+Vu:2014,
    author = {Tao, Terence and Vu, Van},
    title = {Local Universality of Zeroes of Random Polynomials},
    journal = {International Mathematics Research Notices},
    volume = {2015},
    number = {13},
    pages = {5053-5139},
    year = {2014},
    month = {06},
    abstract = {In this paper, we establish some local universality results concerning the correlation functions of the zeroes of random polynomials with independent coefficients. More precisely, consider two random polynomials \$f =\\sum \_\{i=1\}^n c\_i \\xi \_i z^i\$ and \$\\tilde \{f\} =\\sum \_\{i=1\}^n c\_i \\tilde \{\\xi \}\_i z^i\$, where the ξi and \$\\tilde \{\\xi \}\_i\$ are iid random variables that match moments to second order, the coefficients ci are deterministic, and the degree parameter n is large. Our results show, under some light conditions on the coefficients ci and the tails of \$\\xi \_i, \\tilde \{\\xi \}\_i\$, that the correlation functions of the zeroes of f and \$\\tilde \{f\}\$ are approximately the same. As an application, we give some answers to the classical question How many zeroes of a random polynomials are real? for several classes of random polynomial models. Our analysis relies on a general replacement principle, motivated by some recent work in random matrix theory. This principle enables one to compare the correlation functions of two random functions f and \$\\tilde \{f\}\$ if their log magnitudes \$\\log |f|, \\log |\\tilde \{f\}|\$ are close in distribution, and if some nonconcentration bounds are obeyed.},
    issn = {1073-7928},
    doi = {10.1093/imrn/rnu084},
    url = {https://doi.org/10.1093/imrn/rnu084},
    eprint = {https://academic.oup.com/imrn/article-pdf/2015/13/5053/9277230/rnu084.pdf},
}

@article {Shepp+Vanderbei:1995,
    AUTHOR = {Shepp, Larry A. and Vanderbei, Robert J.},
     TITLE = {The complex zeros of random polynomials},
   JOURNAL = {Trans. Amer. Math. Soc.},
  FJOURNAL = {Transactions of the American Mathematical Society},
    VOLUME = {347},
      YEAR = {1995},
    NUMBER = {11},
     PAGES = {4365--4384},
      ISSN = {0002-9947,1088-6850},
   MRCLASS = {30C15 (60G99)},
  MRNUMBER = {1308023},
MRREVIEWER = {Kambiz\ Farahmand},
       DOI = {10.2307/2155041},
       URL = {https://doi.org/10.2307/2155041},
}

@article {Bogomolny+Bohigas+Leboeuf:1996,
    AUTHOR = {Bogomolny, E. and Bohigas, O. and Leboeuf, P.},
     TITLE = {Quantum chaotic dynamics and random polynomials},
   JOURNAL = {J. Statist. Phys.},
  FJOURNAL = {Journal of Statistical Physics},
    VOLUME = {85},
      YEAR = {1996},
    NUMBER = {5-6},
     PAGES = {639--679},
      ISSN = {0022-4715,1572-9613},
   MRCLASS = {81Q50 (82B44)},
  MRNUMBER = {1418808},
MRREVIEWER = {Christian\ Grosche},
       DOI = {10.1007/BF02199359},
       URL = {https://doi.org/10.1007/BF02199359},
}

@article{Bogomolny+Bohigas+Leboeuf:1992,
  title = {Distribution of roots of random polynomials},
  author = {Bogomolny, E. and Bohigas, O. and Leboeuf, P.},
  journal = {Phys. Rev. Lett.},
  volume = {68},
  issue = {18},
  pages = {2726--2729},
  numpages = {0},
  year = {1992},
  month = {May},
  publisher = {American Physical Society},
  doi = {10.1103/PhysRevLett.68.2726},
  url = {https://link.aps.org/doi/10.1103/PhysRevLett.68.2726}
}

@article{Aldous+Fyodorov:2004,
doi = {10.1088/0305-4470/37/4/011},
url = {https://dx.doi.org/10.1088/0305-4470/37/4/011},
year = {2004},
month = {jan},
publisher = {},
volume = {37},
number = {4},
pages = {1231},
author = {Anthony P Aldous and Yan V Fyodorov},
title = {Real roots of random polynomials: universality close to accumulation points},
journal = {Journal of Physics A: Mathematical and General},
abstract = {We identify the scaling region of a width O(n−1) in the vicinity of the accumulation points t = ±1 of the real roots of a random Kac-like polynomial of large degree n. We argue that the density of the real roots in this region tends to a universal form shared by all polynomials with independent, identically distributed coefficients ci, as long as the second moment σ = E(c2i) is finite. In particular, we reveal a gradual (in contrast to the previously reported abrupt) and quite nontrivial suppression of the number of real roots for coefficients with a nonzero mean value μn = E(ci) scaled as μn ∼ n−1/2.}
}

@article {Do:2021,
    AUTHOR = {Do, Yen Q.},
     TITLE = {Real roots of random polynomials with coefficients of
              polynomial growth: a comparison principle and applications},
   JOURNAL = {Electron. J. Probab.},
  FJOURNAL = {Electronic Journal of Probability},
    VOLUME = {26},
      YEAR = {2021},
     PAGES = {Paper No. 144, 45},
      ISSN = {1083-6489},
   MRCLASS = {60G99 (60F05)},
  MRNUMBER = {4346676},
MRREVIEWER = {Przemys\l aw\ Matu\l a},
       DOI = {10.1214/21-ejp719},
       URL = {https://doi.org/10.1214/21-ejp719},
}

@article {Flasche+Kabluhcko:2020,
    AUTHOR = {Flasche, Hendrik and Kabluchko, Zakhar},
     TITLE = {Expected number of real zeroes of random {T}aylor series},
   JOURNAL = {Commun. Contemp. Math.},
  FJOURNAL = {Communications in Contemporary Mathematics},
    VOLUME = {22},
      YEAR = {2020},
    NUMBER = {7},
     PAGES = {1950059, 38},
      ISSN = {0219-1997,1793-6683},
   MRCLASS = {30B20 (26C10 30C15 60F05 60F17 60F99 60G15)},
  MRNUMBER = {4135008},
MRREVIEWER = {Christian\ Lavault},
       DOI = {10.1142/S0219199719500597},
       URL = {https://doi.org/10.1142/S0219199719500597},
}

@article{Do+Nguen+Vu:2018,
    AUTHOR = {Do, Yen and Nguyen, Oanh and Vu, Van},
     TITLE = {Roots of random polynomials with coefficients of polynomial
              growth},
   JOURNAL = {Ann. Probab.},
  FJOURNAL = {The Annals of Probability},
    VOLUME = {46},
      YEAR = {2018},
    NUMBER = {5},
     PAGES = {2407--2494},
      ISSN = {0091-1798,2168-894X},
   MRCLASS = {60G99},
  MRNUMBER = {3846831},
MRREVIEWER = {Maxim\ L.\ Yattselev},
       DOI = {10.1214/17-AOP1219},
       URL = {https://doi.org/10.1214/17-AOP1219},
}

@misc{Krishnapur+Lundberg+Nguyen:2022,
      title={The number of limit cycles bifurcating from a randomly perturbed center},
      author={Manjunath Krishnapur and Erik Lundberg and Oanh Nguyen},
      year={2022},
      eprint={2112.05672},
      archivePrefix={arXiv},
      primaryClass={math.PR},
      url={https://arxiv.org/abs/2112.05672},
}

@article {Nguyen+Vu:2022,
    AUTHOR = {Nguyen, Oanh and Vu, Van},
     TITLE = {Roots of random functions: a framework for local universality},
   JOURNAL = {Amer. J. Math.},
  FJOURNAL = {American Journal of Mathematics},
    VOLUME = {144},
      YEAR = {2022},
    NUMBER = {1},
     PAGES = {1--74},
      ISSN = {0002-9327,1080-6377},
   MRCLASS = {60E05 (30C15 42A61)},
  MRNUMBER = {4367414},
MRREVIEWER = {Benjamin\ Arras},
       DOI = {10.1353/ajm.2022.0000},
       URL = {https://doi.org/10.1353/ajm.2022.0000},
}

@article {Ibragimov+Zeitouni:1997,
    AUTHOR = {Ibragimov, Ildar and Zeitouni, Ofer},
     TITLE = {On roots of random polynomials},
   JOURNAL = {Trans. Amer. Math. Soc.},
  FJOURNAL = {Transactions of the American Mathematical Society},
    VOLUME = {349},
      YEAR = {1997},
    NUMBER = {6},
     PAGES = {2427--2441},
      ISSN = {0002-9947,1088-6850},
   MRCLASS = {60G99 (26C10 30B20 60F05)},
  MRNUMBER = {1390040},
MRREVIEWER = {Kambiz\ Farahmand},
       DOI = {10.1090/S0002-9947-97-01766-2},
       URL = {https://doi.org/10.1090/S0002-9947-97-01766-2},
}

@article {Hughes+Nikeghbali:2008,
    AUTHOR = {Hughes, C. P. and Nikeghbali, A.},
     TITLE = {The zeros of random polynomials cluster uniformly near the
              unit circle},
   JOURNAL = {Compos. Math.},
  FJOURNAL = {Compositio Mathematica},
    VOLUME = {144},
      YEAR = {2008},
    NUMBER = {3},
     PAGES = {734--746},
      ISSN = {0010-437X,1570-5846},
   MRCLASS = {30C10 (30B20 30C15)},
  MRNUMBER = {2422348},
MRREVIEWER = {Scott\ E.\ Zrebiec},
       DOI = {10.1112/S0010437X07003302},
       URL = {https://doi.org/10.1112/S0010437X07003302},
}

@article {Kabluchko+Zaporozhets:2014,
    AUTHOR = {Kabluchko, Zakhar and Zaporozhets, Dmitry},
     TITLE = {Asymptotic distribution of complex zeros of random analytic
              functions},
   JOURNAL = {Ann. Probab.},
  FJOURNAL = {The Annals of Probability},
    VOLUME = {42},
      YEAR = {2014},
    NUMBER = {4},
     PAGES = {1374--1395},
      ISSN = {0091-1798,2168-894X},
   MRCLASS = {30C15 (26C10 30B20 60B10 60B20 60G57)},
  MRNUMBER = {3262481},
MRREVIEWER = {Matthew\ M.\ Jones},
       DOI = {10.1214/13-AOP847},
       URL = {https://doi.org/10.1214/13-AOP847},
}

@article {Iksanov+Kabluchko+Marynych:2016,
    AUTHOR = {Iksanov, Alexander and Kabluchko, Zakhar and Marynych,
              Alexander},
     TITLE = {Local universality for real roots of random trigonometric
              polynomials},
   JOURNAL = {Electron. J. Probab.},
  FJOURNAL = {Electronic Journal of Probability},
    VOLUME = {21},
      YEAR = {2016},
     PAGES = {Paper No. 63, 19},
      ISSN = {1083-6489},
   MRCLASS = {60G50 (26C99 30C15 60F17 60G55)},
  MRNUMBER = {3563891},
       DOI = {10.1214/16-EJP9},
       URL = {https://doi.org/10.1214/16-EJP9},
}

@book {Billingsley,
    AUTHOR = {Billingsley, Patrick},
     TITLE = {Convergence of probability measures},
    SERIES = {Wiley Series in Probability and Statistics: Probability and
              Statistics},
   EDITION = {Second},
      NOTE = {A Wiley-Interscience Publication},
 PUBLISHER = {John Wiley \& Sons, Inc., New York},
      YEAR = {1999},
     PAGES = {x+277},
      ISBN = {0-471-19745-9},
   MRCLASS = {60B10 (28A33 60F17)},
  MRNUMBER = {1700749},
       DOI = {10.1002/9780470316962},
       URL = {https://doi.org/10.1002/9780470316962},
}

@book {BGT,
    AUTHOR = {Bingham, N. H. and Goldie, C. M. and Teugels, J. L.},
     TITLE = {Regular variation},
    SERIES = {Encyclopedia of Mathematics and its Applications},
    VOLUME = {27},
 PUBLISHER = {Cambridge University Press, Cambridge},
      YEAR = {1987},
     PAGES = {xx+491},
      ISBN = {0-521-30787-2},
   MRCLASS = {26A12 (11K65 11N60 30-02 40E05 60-02 60Fxx)},
  MRNUMBER = {898871},
MRREVIEWER = {R.\ A.\ Maller},
       DOI = {10.1017/CBO9780511721434},
       URL = {https://doi.org/10.1017/CBO9780511721434},
}

@book {HKPV,
    AUTHOR = {Hough, J. Ben and Krishnapur, Manjunath and Peres, Yuval and
              Vir\'ag, B\'alint},
     TITLE = {Zeros of {G}aussian analytic functions and determinantal point
              processes},
    SERIES = {University Lecture Series},
    VOLUME = {51},
 PUBLISHER = {American Mathematical Society, Providence, RI},
      YEAR = {2009},
     PAGES = {x+154},
      ISBN = {978-0-8218-4373-4},
   MRCLASS = {60G55 (30B20 30C15 60B20 60F10 60G15 65H04 82B31)},
  MRNUMBER = {2552864},
MRREVIEWER = {Dmitry\ Beliaev},
       DOI = {10.1090/ulect/051},
       URL = {https://doi.org/10.1090/ulect/051},
}

@article {Kab+Klim:2014,
    AUTHOR = {Kabluchko, Zakhar and Klimovsky, Anton},
     TITLE = {Complex random energy model: zeros and fluctuations},
   JOURNAL = {Probab. Theory Related Fields},
  FJOURNAL = {Probability Theory and Related Fields},
    VOLUME = {158},
      YEAR = {2014},
    NUMBER = {1-2},
     PAGES = {159--196},
      ISSN = {0178-8051,1432-2064},
   MRCLASS = {60G50 (30C15 60E07 60F05 60F17 60G15 82B44)},
  MRNUMBER = {3152783},
MRREVIEWER = {Lee-Peng\ Teo},
       DOI = {10.1007/s00440-013-0480-5},
       URL = {https://doi.org/10.1007/s00440-013-0480-5},
}

@book {Shiryaev,
    AUTHOR = {Shiryaev, A. N.},
     TITLE = {Probability},
   EDITION = {Second},
 PUBLISHER = {``Nauka'', Moscow},
      YEAR = {1989},
     PAGES = {640},
      ISBN = {5-02-013955-6},
   MRCLASS = {60-01},
  MRNUMBER = {1024077},
}

@article {Pritsker+Yeager:2015,
    AUTHOR = {Pritsker, Igor E. and Yeager, Aaron M.},
     TITLE = {Zeros of polynomials with random coefficients},
   JOURNAL = {J. Approx. Theory},
  FJOURNAL = {Journal of Approximation Theory},
    VOLUME = {189},
      YEAR = {2015},
     PAGES = {88--100},
      ISSN = {0021-9045,1096-0430},
   MRCLASS = {30C10 (12D05 12D10 30C15 60F99 60G99)},
  MRNUMBER = {3280673},
MRREVIEWER = {E.\ J.\ Barbeau},
       DOI = {10.1016/j.jat.2014.09.003},
       URL = {https://doi.org/10.1016/j.jat.2014.09.003},
}

@incollection {Shirai:2012,
    AUTHOR = {Shirai, Tomoyuki},
     TITLE = {Limit theorems for random analytic functions and their zeros},
 BOOKTITLE = {Functions in number theory and their probabilistic aspects},
    SERIES = {RIMS K\^oky\^uroku Bessatsu},
    VOLUME = {B34},
     PAGES = {335--359},
 PUBLISHER = {Res. Inst. Math. Sci. (RIMS), Kyoto},
      YEAR = {2012},
   MRCLASS = {60G55 (30B20 30C10 60F05 60F17)},
  MRNUMBER = {3014854},
MRREVIEWER = {Zakhar\ Kabluchko},
}

@article {Bojanic+Seneta:1971,
    AUTHOR = {Bojani\'c, R. and Seneta, E.},
     TITLE = {Slowly varying functions and asymptotic relations},
   JOURNAL = {J. Math. Anal. Appl.},
  FJOURNAL = {Journal of Mathematical Analysis and Applications},
    VOLUME = {34},
      YEAR = {1971},
     PAGES = {302--315},
      ISSN = {0022-247X},
   MRCLASS = {26.49},
  MRNUMBER = {274676},
MRREVIEWER = {Y.\ L.\ Luke},
       DOI = {10.1016/0022-247X(71)90114-4},
       URL = {https://doi.org/10.1016/0022-247X(71)90114-4},
}

@article {Hunt:1951,
    AUTHOR = {Hunt, G. A.},
     TITLE = {Random {F}ourier transforms},
   JOURNAL = {Trans. Amer. Math. Soc.},
  FJOURNAL = {Transactions of the American Mathematical Society},
    VOLUME = {71},
      YEAR = {1951},
     PAGES = {38--69},
      ISSN = {0002-9947,1088-6850},
   MRCLASS = {42.4X},
  MRNUMBER = {51340},
MRREVIEWER = {M.\ Kac},
       DOI = {10.2307/1990858},
       URL = {https://doi.org/10.2307/1990858},
}

@article{Bulinskaya:1961,
author = {Bulinskaya, E. V.},
title = {On the Mean Number of Crossings of a Level by a Stationary {G}aussian Process},
journal = {Theory of Probability \& Its Applications},
volume = {6},
number = {4},
pages = {435-438},
year = {1961},
doi = {10.1137/1106059},
}

@article{Diaconis+Shahshahani:1994,
    AUTHOR = {Diaconis, P. and Shahshahani, M.},
     TITLE = {On the eigenvalues of random matrices},
   JOURNAL = {J. Appl. Probab.},
  FJOURNAL = {Journal of Applied Probability},
    VOLUME = {31A},
      YEAR = {1994},
     PAGES = {49--62},
      ISSN = {0021-9002},
   MRCLASS = {60B15 (15A18 15A52 60F05)},
  MRNUMBER = {1274717},
MRREVIEWER = {Daniel Rockmore},
       DOI = {10.2307/3214948},
       URL = {https://doi.org/10.2307/3214948},
}

@article {Hughes+Keating+OConnell:2001,
    AUTHOR = {Hughes, C. P. and Keating, J. P. and O'Connell, Neil},
     TITLE = {On the characteristic polynomial of a random unitary matrix},
   JOURNAL = {Comm. Math. Phys.},
  FJOURNAL = {Communications in Mathematical Physics},
    VOLUME = {220},
      YEAR = {2001},
    NUMBER = {2},
     PAGES = {429--451},
      ISSN = {0010-3616},
   MRCLASS = {82B41 (11Z05 33C90 60F10)},
  MRNUMBER = {1844632},
MRREVIEWER = {Estelle L. Basor},
       DOI = {10.1007/s002200100453},
       URL = {https://doi.org/10.1007/s002200100453},
}

@article{Chhaibi+Madaule+Najnudel:2018,
    AUTHOR = {Chhaibi, Reda and Madaule, Thomas and Najnudel, Joseph},
     TITLE = {On the maximum of the {${\rm C}\beta {\rm E}$} field},
   JOURNAL = {Duke Math. J.},
  FJOURNAL = {Duke Mathematical Journal},
    VOLUME = {167},
      YEAR = {2018},
    NUMBER = {12},
     PAGES = {2243--2345},
      ISSN = {0012-7094},
   MRCLASS = {60B20 (60G70)},
  MRNUMBER = {3848391},
MRREVIEWER = {Nizar Demni},
       DOI = {10.1215/00127094-2018-0016},
       URL = {https://doi.org/10.1215/00127094-2018-0016},
}

@ARTICLE{Do+Nguyen+Nguyen:2022,
	author = {Do, Yen and Nguyen, Hoi H. and Nguyen, Oanh},
	title = {Random trigonometric polynomials: Universality and non-universality of the variance for the number of real roots},
	year = {2022},
	journal = {Annales de l'institut Henri Poincare (B) Probability and Statistics},
	volume = {58},
	number = {3},
	pages = {1460--1504},
}

@ARTICLE{Angst+Pautrel+Poly:2022,
	author = {Angst, J\"{u}rgen and Pautrel, Thibault and Poly, Guillaume},
	title = {Real Zeros of Random Trigonometric Polynomials with Dependent Coefficients},
	year = {2022},
	journal = {Transactions of the American Mathematical Society},
	volume = {375},
	number = {10},
	pages = {7209--7260},
}

@article{Buraczewski+Dong+Iksanov+Marynych:2023,
title = {Limit theorems for random {D}irichlet series},
journal = {Stochastic Processes and their Applications},
volume = {165},
pages = {246-274},
year = {2023},
author = {Buraczewski, Dariusz  and Dong, Congzao and Iksanov, Alexander and Marynych, Alexander},
}

\end{document}